\theoremstyle{plain}
\newtheorem{thm}{\it Theorem}[section]
\newtheorem{prop}[thm]{\it Proposition}
\newtheorem{cor}[thm]{\it Corollary}
\newtheorem{lem}[thm]{\it Lemma}
\theoremstyle{remark}
\newtheorem{defn}[thm]{Def{}inition}
\newtheorem{rem}[thm]{Remark}
\newtheorem{exa}[thm]{Example}
\numberwithin{equation}{section}
\begin{document}

\title [Discrete            Vector-Valued  Nonuniform Gabor Frames]{Discrete   Vector-Valued  Nonuniform Gabor Frames}

\author[Lalit   Kumar Vashisht]{Lalit  Kumar  Vashisht}
\address{{\bf{Lalit  Kumar  Vashisht}}, Department of Mathematics,
University of Delhi, Delhi-110007, India.}
\email{lalitkvashisht@gmail.com}

\author[Hari Krishan Malhotra]{Hari Krishan Malhotra  }
\address{ \textbf{Hari Krishan Malhotra}, Department of Mathematics,
University of Delhi, Delhi-110007, India.}
\email{maths.hari67@gmail.com}

\begin{abstract}
Gabor frames have interested many mathematicians and physicists due to their potential applications in time-frequency analysis, in particular,  signal processing.  A Gabor system is a collection of vectors which is obtained by applying  modulation and shift operators to  non-zero functions in  signal spaces. In many applications, for example, signal processing related to Gabor systems,  the corresponding shifts  may not be uniform.  That  is,  the set associated with shifts  may not be a group under usual addition.  We analyze discrete vector-valued  nonuniform Gabor frames (DVNUG frames, in short) in discrete vector-valued  nonuniform   signal spaces, where  the indexing  set associated with shifts  may not be a subgroup of real numbers  under usual addition, but a spectrum which is  based on the theory of spectral pairs. First, we give necessary and sufficient conditions for the existence of DVNUG  Bessel sequences  in  discrete vector-valued  nonuniform  signal spaces in terms of Fourier transformations of the modulated window sequences. We provide a  characterization of DVNUG frames in  discrete  vector-valued nonuniform signal spaces. It is shown that DVNUG  frames are stable under small perturbation of window sequences associated with given  discrete vector-valued  nonuniform Gabor systems. We observed that the  arithmetic mean  sequences  associated with  window sequences of a given  DVNUG frame collectively  constitutes a discrete nonuniform Gabor frame.
Finally, we discuss an interplay between window sequences of DVNUG  systems and their  corresponding  coordinates.
\end{abstract}

\subjclass[2010]{42C15;  42C30;  42C40; 43A32.}

\keywords{Discrete frame, Bessel sequence, Gabor frames, spectral pair, perturbation.\\
The research of  first author is  supported by the Faculty Research Programme  Grant-IoE, University of Delhi, Delhi-110007,  India (Grant No.: IoE/FRP/PCMS/2020/27). The second author is supported by  the University Grant Commission (UGC), India (Grant No.: 19/06/2016(i)EU-V)}

\maketitle

\baselineskip15pt
\section{Introduction}
Gabor frames, also known as Weyl-Heisenberg frames,  originate in quantum mechanics by Neumann \cite{Neumann},  and in communication theory by the idea of   Gabor \cite{Gabor} to represent a complicated signal in terms of elementary functions. Duffin and  Schaeffer, in \cite{DS}, introduced the notion of frames in separable Hilbert spaces during their deep study  of non-harmonic Fourier series.
For fixed positive real numbers $a$, $b$, and a non-zero function $\psi$ (called \emph{window function}) in $L^2(\mathbb{R})$, a \emph{Gabor system} is the collection of functions
\begin{align*}
\mathcal{G}(a, b, \psi): = \{E_{mb}T_{na}\psi\}_{m, n \in \mathbb{Z}},
\end{align*}
where $E_{b}\psi(\bullet) = e^{2 \pi i b \bullet} \psi(\bullet)$ and $T_{a}\psi(\bullet) = \psi(\bullet -a)$ are modulation and translation operators, respectively, that act unitarily on the signal space $L^2(\mathbb{R})$. In time-frequency analysis, one important direction is to determine scalars   $a$, $b$ and window functions $\psi$ such that every function $f$ in $L^2(\mathbb{R})$ can be decomposed and analyzed  in terms of systems of time-frequency shift $\mathcal{G}(a, b, \psi)$.
 Frame theory  provides us to describe when reproducing formulas  for $f$  of the form
\begin{align*}
f = \sum_{m,n \in \mathbb{Z}} c_{m,n} E_{mb}T_{na}\psi, \  \text{and analysis of function} \ f \mapsto \{\langle f,  E_{mb}T_{na}\psi\}_{m, n \in \mathbb{Z}}
\end{align*}
are possible. The Gabor system  $\mathcal{G}(a, b, \psi)$ is called a \emph{Gabor frame} for $L^2(\mathbb{R})$, if for all $f$ in $L^2(\mathbb{R})$,
\begin{align*}
\beta_o \|f\|^2 \leq \sum_{m,n \in \mathbb{Z}} |\langle f,  E_{mb}T_{na}\psi\rangle|^2 \leq \gamma_o \|f\|^2
\end{align*}
 holds for some finite positive real numbers $\beta_o$ and $\gamma_o$. For more than three  decades, the Gabor frames have been acknowledged as a very powerful tool in time-frequency analysis, see \cite{Groch, H89}. To be precise, the study of Gabor frames is an integral part of time-frequency analysis. Further, the  theory of Gabor frames has become a most interesting area for researchers due to its wide applicability in engineering sciences and pure mathematics, see  \cite{CK, ole, Classen, DGM, DabI, FZUS, Heil20, JaanI, PalleyJ} and many references therein.

Recently, Gabardo and Nashed \cite{GN1} introduced \emph{nonuniform multiresolution analysis} (NUMRA) and constructed
 nonuniform wavelets in the signal space $L^2(\mathbb{R})$, in which the translation set may not be a subgroup of real numbers under usual addition, but a spectrum which is based on the theory of  spectral pairs (see Definition \ref{SPDefn}). Gabardo and Yu \cite{GN2} continued the same study and gave
  characterization for nonuniform wavelets associated with a  NUMRA.
\begin{defn}\cite[p.\,800]{GN2}
	Let $N\ge 1$  be a positive integer  and $r$ be a fixed  odd integer coprime with $N$ such that $1\le r\le 2N-1$. A set of non-zero functions $\{\psi_1,\psi_2,\dots,\psi_k\}\subset L^2(\mathbb{R})$ is said to be \emph{nonuniform orthonormal wavelets} for $L^2(\mathbb{R})$, if the collection of functions,
	\begin{align*}
	\Big\{(2N)^{\frac{j}{2}}\psi_l((2N)^{j} \cdot-\lambda) \Big\}_{j\in \mathbb{Z},\lambda \in \Lambda \atop l =1,2,\dots,k}
	\end{align*}
	forms a complete orthonormal system  for $L^2(\mathbb{R})$, where $\Lambda =\left\{0,\frac{r}{N}  \right\} +2\mathbb{Z} $.
\end{defn}
They characterized  nonuniform wavelets associated with NUMRA for the space $L^2(\mathbb{R})$. Gabardo and Nashed, in  \cite{JN},  proved the analogue of Cohen's condition for NUMRA. We refer to \cite{GN2,  HK21xx, YuG} for fundamental results of  wavelets of nonuniform systems. A characterization  of scaling functions  of  NUMRA can be found in  \cite{HK}. The authors in \cite{HK20} proved the unitary extension principle and  the  oblique extension principle   for construction of  multi-generated nonuniform tight wavelet frames for the signal space $L^2(\mathbb{R})$.

Frazier, in \cite{frazier}, studied   $P^{th}$-stage discrete wavelets in both finite dimensional $\ell^2(\mathbb{Z}_N)$ and  infinite dimensional $\ell^2(\mathbb{Z})$ discrete signal space. In this direction, Malhotra and Vashisht, in  \cite{HK21}, studied construction of  $P^{th}$-stage  nonuniform discrete wavelet frames in discrete nonuniform signal spaces.
 Lopez and Han \cite{lopez} characterized discrete tight Gabor frames, dual frame pairs and orthogonal Bessel sequences in  $\ell^2(\mathbb{Z}^d)$. Xu, Lu and Fan in \cite{Xu},  constructed $J^{th}$-stage discrete periodic wave packet frames  in $\ell^2(\mathbb{Z}_N)$. Gressman \cite{gressman} studied wavelets on integers.  Lu and Li \cite{lu} studied the frame properties of generalized shift-invariant system in $\ell^2(\mathbb{Z}^d)$.  Discrete Gabor analysis and discrete wavelet analysis  attracted  many mathematicians and physicist  in recent years due to its potential applications in signal processing, see  \cite{CK, ole, DVII, JaanII,  SLiI, LiDI, LiDII, riol} and many references therein.
 We recall that there are  differences between discrete Gabor analysis and  the continuous Gabor one. More precisely,  Heil showed in \cite{HeilIII}  that Gabor frames in the continuous case are bases only if they are generated by functions that have poor decay or they are not smooth. But, in discrete case, it is possible   to construct Gabor frames that are bases and are generated by functions with good decay.

 Motivated by  fundamental work  of Frazier; Gabardo and Nashed; and potential applications of discrete Gabor frames in both pure and engineering  sciences, we study the frame properties of discrete  vector-valued nonuniform Gabor frames of the form
 \begin{align*}
\mathcal{G}(\Lambda,  T_1, T_2, \bold{W}_j) :=\big\{E_{\frac{m}{M}}R_{2N\lambda}\bold{W}_j :\lambda\in \Lambda,m\in T_1,j\in T_2\big\}
\end{align*}
in the discrete vector-valued nonuniform   signal space $\ell^2(\Lambda,\mathbb{C}^S)$, see Definition \ref{DDEFNI}. Here, $R_{(.)}$ and $E_{(.)}$, respectively,  are shift operator and modulation operator on  $\ell^2(\Lambda,\mathbb{C}^S)$, and $\bold{W}_j$ are window sequences in $\ell^2(\Lambda,\mathbb{C}^S)$.  Notable contributions in the present work include  a necessary and sufficient condition for the existence of discrete vector-valued nonuniform Gabor Bessel sequences; and  characterization of discrete vector-valued nonuniform Gabor frames.  It is also observed  that the  arithmetic mean  sequences  associated with  window sequences of a given discrete vector-valued nonuniform Gabor  frame collectively  constitutes a discrete nonuniform Gabor frame.

\vspace{10pt}

\textbf{Structure of paper and contribution:} In order to make the paper self-contained, Section \ref{Sect2} gives notation  and basic results related to Bessel sequences, discrete Hilbert frames in separable Hilbert spaces; and basics on  discrete vector-valued nonuniform   signal spaces. It also contains elementary properties of the Fourier transform on   discrete  vector-valued nonuniform   signal spaces. In Section \ref{Sect3}, we  give Bessel sequences and frames of discrete vector-valued nonuniform Gabor systems, where the translation parameter is based on the theory of spectral pairs, see Definition \ref{DefnnonI}. Theorem \ref{th1} and Theorem \ref{th2} provides, respectively,  a necessary and sufficient condition for the existence of discrete vector-valued nonuniform Gabor Bessel sequences in terms of Fourier transformations of the modulated window sequence. A characterization of discrete vector-valued nonuniform Gabor frame can be found in Theorem \ref{main1}, which based on a  fundamental tool developed in Lemma \ref{Lemma1}. Theorem \ref{per} in Section \ref{sect4}, shows that discrete vector-valued nonuniform Gabor frames are stable under small perturbation. In Section \ref{sect5}, we discuss  an interplay between window sequences of discrete vector-valued nonuniform Gabor system  and their corresponding coordinates. Theorem \ref{InterthI} shows that the arithmetic mean  sequences  associated with  window sequences of a given   discrete vector-valued nonuniform Gabor frame collectively  constitutes a discrete nonuniform Gabor frame.
Finally, in Theorem \ref{120}, we discuss relationship between the window sequences of discrete vector-valued nonuniform Gabor system  and its corresponding coordinates. Examples and counterexamples are also given to illustrate our  results.

\section{Technical Tools}\label{Sect2}
This section provides basic notation and results about Bessel sequences, discrete frames and basic tools related with nonuniform signal spaces.  We begin with the definition of a discrete Hilbert frame in separable Hilbert spaces.
\subsection{Discrete Hilbert Frames} Let $\mathcal{H}$ be a real or complex separable, finite or infinite dimensional Hilbert space with respect to an inner product $\langle \cdot,   \cdot\rangle$. The norm induced  by $\langle \cdot,   \cdot\rangle$ is given by $||g||_{\mathcal{H}}=\sqrt{\langle g,g\rangle}$, $g \in \mathcal{H}$. A  collection  $\{g_k\}_{k\in \mathbb{I}}$ of vectors in  $\mathcal{H}$, where $\mathbb{I}$ is a countable index set, is called  a \emph{discrete Hilbert frame} (or simply \emph{discrete frame}) for $\mathcal{H}$ if for some  positive real constants $a_0$ and $b_0$, the following  holds
\begin{align}\label{2020}
a_o \|g\|_{\mathcal{H}}^2\leq  \sum_{k \in \mathbb{I}} |\langle g, g_k\rangle|^2 \leq b_o \|g\|_{\mathcal{H}}^2, \ g \in \mathcal{H}.
\end{align}
 The constants $a_0$ and $b_0$ are called \emph{lower} and \emph{upper frame bounds} of $\{g_k\}_{k\in \mathbb{I}}$,  respectively. If  $a_0=b_0$, then $\{g_k\}_{k\in \mathbb{I}}$ is called \emph{tight discrete Hilbert frame} and if $a_0=b_0=1$ then it is called \emph{Parseval discrete Hilbert frame} for $\mathcal{H}$. If in \eqref{2020} only right hand side inequality holds then $\{g\}_{k\in \mathbb{I}}$ is called \emph{Bessel sequence} with Bessel bound $b_0$. Associated with a Bessel sequence $\{g_k\}_{k\in \mathbb{I}}$, the map $\Theta:\mathcal{H}\to \mathcal{H}$ given by $\Theta:g\to \sum_{k\in\mathbb{I}}\langle g, g_k\rangle g_k  $  is called \emph{frame operator} of $\{g_k\}_{k\in \mathbb{I}}$. It is bounded and linear; and invertible on $\mathcal{H}$ if $\{g_k\}_{k\in \mathbb{I}}$ is a frame for $\mathcal{H}$. Thus, if $\{g_k\}_{k\in \mathbb{I}}$ is a frame for $\mathcal{H}$, then each vector $g$ of $\mathcal{H}$ can be expressed as series, not necessarily unique,  $g= \Theta \Theta^{-1} g=\sum_{k\in \mathbb{I}}\langle \Theta^{-1} g,g_k\rangle g_k$. This series expansion is useful in signal  reconstruction and their analysis; and  take over many areas of analysis for hot topic for research.  Here, we would like to mention  that frames are also used in, however this list in incomplete: sampling theory \cite{AkramI},  signal processing \cite{BCGLL, DVIII, DVIV}, iterated function system \cite{VD3}, quantum physics \cite{JVash},  wavelet theory and multiresolution analysis \cite{rayan, Bhan, Mallat, Meyer,  RZ1, RZ3}, also see many references therein.


The following  basic results about Bessel sequence will be used in sequel.
 \begin{thm}\label{nh1}\cite[p.\,75]{ole}
 	A sequence  $\{g_k \}_{k \in \mathbb{I} }$ of vectors in a Hilbert space $\mathcal{H}$ is a Bessel sequence with Bessel bound $b_0$ if and only if the pre-frame operator $\mathcal{T}:\ell^2(\mathbb{I})\to \mathcal{H}$,
 	$\mathcal{T}(\{a_k\}_{k\in \mathbb{I}})=\sum\limits_{k\in \mathbb{I}}a_k g_k$
 	is bounded operator  from $\ell^2(\mathbb{I})$ into $\mathcal{H}$ with $\|\mathcal{T}\|^2\le b_0$.
 \end{thm}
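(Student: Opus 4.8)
The plan is to establish the two implications separately, with the understanding that the real content of the ``only if'' direction is to show that the series defining $\mathcal{T}$ converges for \emph{every} sequence in $\ell^2(\mathbb{I})$; a priori $\mathcal{T}$ only makes unambiguous sense on the dense subspace of finitely supported sequences, where it is trivially linear.

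First I would handle the ``only if'' direction. Assume $\{g_k\}_{k\in\mathbb{I}}$ is Bessel with bound $b_0$. For a finite subset $F\subseteq\mathbb{I}$ and scalars $\{a_k\}_{k\in F}$, I would combine the duality formula for the norm in $\mathcal H$ with the Cauchy--Schwarz inequality and the Bessel estimate:
\[
\Big\|\sum_{k\in F}a_k g_k\Big\|_{\mathcal H}
=\sup_{\|g\|_{\mathcal H}=1}\Big|\sum_{k\in F}a_k\langle g_k,g\rangle\Big|
\le\Big(\sum_{k\in F}|a_k|^2\Big)^{1/2}\sup_{\|g\|_{\mathcal H}=1}\Big(\sum_{k\in F}|\langle g,g_k\rangle|^2\Big)^{1/2}
\le\sqrt{b_0}\,\Big(\sum_{k\in F}|a_k|^2\Big)^{1/2}.
\]
Applying this with $F$ ranging over the tails of a sequence $\{a_k\}\in\ell^2(\mathbb{I})$ shows that the partial sums of $\sum_{k\in\mathbb{I}}a_k g_k$ form a Cauchy sequence in $\mathcal H$, so the series converges (unconditionally, by the same tail estimate) and $\mathcal{T}$ is well defined on all of $\ell^2(\mathbb{I})$; letting $F$ exhaust $\mathbb{I}$ in the displayed bound then yields $\|\mathcal{T}\{a_k\}\|_{\mathcal H}\le\sqrt{b_0}\,\|\{a_k\}\|_{\ell^2(\mathbb{I})}$, i.e.\ $\|\mathcal{T}\|^2\le b_0$.

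For the converse, suppose $\mathcal{T}$ is a bounded operator from $\ell^2(\mathbb{I})$ into $\mathcal H$ with $\|\mathcal{T}\|^2\le b_0$. Testing against finitely supported sequences gives $\langle\mathcal{T}\{a_k\},g\rangle=\sum_k a_k\langle g_k,g\rangle=\big\langle\{a_k\},\{\langle g,g_k\rangle\}\big\rangle$ for all $g\in\mathcal H$, so $\mathcal{T}^*g=\{\langle g,g_k\rangle\}_{k\in\mathbb{I}}$; in particular $\{\langle g,g_k\rangle\}_k\in\ell^2(\mathbb{I})$ for every $g$. Since $\|\mathcal{T}^*\|=\|\mathcal{T}\|$,
\[
\sum_{k\in\mathbb{I}}|\langle g,g_k\rangle|^2=\|\mathcal{T}^*g\|_{\ell^2(\mathbb{I})}^2\le\|\mathcal{T}\|^2\,\|g\|_{\mathcal H}^2\le b_0\,\|g\|_{\mathcal H}^2,
\]
which is exactly the Bessel condition with bound $b_0$.

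The step I expect to be the main obstacle is the well-definedness argument in the ``only if'' direction: one cannot simply declare $\mathcal{T}$ to be the map $\{a_k\}\mapsto\sum_k a_k g_k$ and then discuss its norm, because convergence of $\sum_k a_k g_k$ for arbitrary $\ell^2$ coefficients is not automatic. It is precisely the Bessel bound, pushed through the duality/Cauchy--Schwarz estimate above, that upgrades the trivial finitely supported case to all of $\ell^2(\mathbb{I})$. The remaining ingredients --- the formula $\|h\|_{\mathcal H}=\sup_{\|g\|_{\mathcal H}=1}|\langle h,g\rangle|$, the Cauchy--Schwarz inequality, and the identity $\|\mathcal{T}^*\|=\|\mathcal{T}\|$ --- are standard.
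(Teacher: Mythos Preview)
Your proof is correct and is essentially the standard argument (as in Christensen's book). Note, however, that the paper does not supply its own proof of this statement: Theorem~\ref{nh1} is quoted from \cite[p.\,75]{ole} as a background result and is used without proof, so there is no in-paper argument to compare against.
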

 \begin{prop}\label{neww}\cite[p.\,107]{ole}
 	Let   $\{g_k \}_{k \in \mathbb{I} }$ be a sequence of vectors in $\mathcal{H}$, such that
 	 there exist a constant $b_0>0$ satisfying
 	\begin{align*}
 	\Big\|\sum a_k g_k\Big\|_{\mathcal{H}}^2\le b_0 \sum|a_k|^2,
 	\end{align*}
 	for all finite sequences $\{a_k\}$, then the series  $\sum_{k\in \mathbb{I}}a_k g_k$ converges for every $\{a_k \}_{k\in \mathbb{I}}\in \ell^2(\mathbb{I})$, further $\{g_k\}_{k\in \mathbb{I}}$ is a Bessel sequence with Bessel bound $b_0$.
 \end{prop}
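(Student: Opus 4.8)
The plan is to first establish that the series $\sum_{k\in\mathbb{I}}a_k g_k$ converges in $\mathcal{H}$ for every $\{a_k\}_{k\in\mathbb{I}}\in\ell^2(\mathbb{I})$, then to package this into a bounded synthesis operator $\mathcal{T}\colon\ell^2(\mathbb{I})\to\mathcal{H}$ with $\|\mathcal{T}\|^2\le b_0$, and finally to invoke Theorem \ref{nh1} to read off the Bessel property with bound $b_0$.

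For convergence, I would fix $\{a_k\}_{k\in\mathbb{I}}\in\ell^2(\mathbb{I})$ and, since $\mathbb{I}$ is countable, enumerate it as $\mathbb{I}=\{k_1,k_2,\dots\}$ and consider the partial sums $s_n=\sum_{j=1}^{n}a_{k_j}g_{k_j}$. For $n>m$, the difference $s_n-s_m=\sum_{j=m+1}^{n}a_{k_j}g_{k_j}$ is a finite sum, so the hypothesis applies and yields $\|s_n-s_m\|_{\mathcal{H}}^2\le b_0\sum_{j=m+1}^{n}|a_{k_j}|^2$. Since $\sum_j|a_{k_j}|^2<\infty$, the right-hand side tends to $0$ as $m,n\to\infty$; hence $(s_n)$ is Cauchy in $\mathcal{H}$ and, by completeness of $\mathcal{H}$, convergent. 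A short additional argument, again using the hypothesis applied to finite sums $\sum_{k\in F}a_k g_k$ over finite subsets $F\subset\mathbb{I}$, shows that the limit does not depend on the chosen enumeration, so $\mathcal{T}(\{a_k\}_{k\in\mathbb{I}}):=\sum_{k\in\mathbb{I}}a_k g_k$ is a well-defined element of $\mathcal{H}$.

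Next I would verify that $\mathcal{T}$ is linear and bounded. Linearity is immediate from the definition of $\mathcal{T}$ as a limit of finite linear combinations. For boundedness, writing $\mathcal{T}(\{a_k\})$ as the limit of the partial sums $s_n$ and using continuity of the norm together with the hypothesis,
\[
\|\mathcal{T}(\{a_k\}_{k\in\mathbb{I}})\|_{\mathcal{H}}^2=\lim_{n\to\infty}\|s_n\|_{\mathcal{H}}^2\le b_0\lim_{n\to\infty}\sum_{j=1}^{n}|a_{k_j}|^2=b_0\sum_{k\in\mathbb{I}}|a_k|^2,
\]
so $\mathcal{T}$ is a bounded operator from $\ell^2(\mathbb{I})$ into $\mathcal{H}$ with $\|\mathcal{T}\|^2\le b_0$. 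This is precisely the situation covered by Theorem \ref{nh1}, whose ``if'' direction then gives at once that $\{g_k\}_{k\in\mathbb{I}}$ is a Bessel sequence with Bessel bound $b_0$. (Alternatively, one can bypass Theorem \ref{nh1} by checking that the adjoint is $\mathcal{T}^{*}g=\{\langle g,g_k\rangle\}_{k\in\mathbb{I}}$ and observing $\sum_{k\in\mathbb{I}}|\langle g,g_k\rangle|^2=\|\mathcal{T}^{*}g\|^2\le\|\mathcal{T}^{*}\|^2\|g\|^2=\|\mathcal{T}\|^2\|g\|^2\le b_0\|g\|^2$.)

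The only genuinely delicate step is the first one: upgrading the finite-sum inequality to convergence of the full series for an arbitrary $\ell^2$ sequence, and in particular checking that the resulting sum is independent of the order of summation so that $\mathcal{T}$ is unambiguously defined. Once that is in place, the boundedness estimate and the appeal to Theorem \ref{nh1} are routine.
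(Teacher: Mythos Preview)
Your argument is correct and is essentially the standard proof of this fact. Note, however, that the paper does not give its own proof of Proposition~\ref{neww}: it is quoted as a known result from \cite[p.\,107]{ole} and used as a tool, so there is nothing in the paper to compare your proof against. Your route via Theorem~\ref{nh1} is exactly how the result is organized in the cited reference.
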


  \begin{lem}\label{naya}\cite[p.\,94]{ole}
 	For any $b>0$ and any  constant $c_0\in \mathbb{R}$, $\{\sqrt{b}e^{2\pi i bl(\xi+c_0)}  \}_{l\in \mathbb{Z}}$ is a complete orthonormal system of $L^2(0,\frac{1}{b})$.
 \end{lem}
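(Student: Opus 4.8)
The plan is to verify the two defining properties of a complete orthonormal system separately. For orthonormality, I would fix $l, l' \in \mathbb{Z}$ and compute
\begin{align*}
\big\langle \sqrt{b}\,e^{2\pi i bl(\xi+c_0)},\ \sqrt{b}\,e^{2\pi i bl'(\xi+c_0)}\big\rangle = b\,e^{2\pi i b(l-l')c_0}\int_0^{1/b} e^{2\pi i b(l-l')\xi}\,d\xi.
\end{align*}
When $l = l'$ the integral equals $1/b$, so the inner product is $1$; when $l \neq l'$, evaluating the antiderivative of $e^{2\pi i b(l-l')\xi}$ at the endpoints yields $\dfrac{e^{2\pi i(l-l')}-1}{2\pi i b(l-l')} = 0$, since $l-l' \in \mathbb{Z}$. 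Hence the family is orthonormal in $L^2(0,1/b)$.

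For completeness, I would first note that $e^{2\pi i bl(\xi+c_0)} = e^{2\pi i blc_0}\,e^{2\pi i bl\xi}$ with $|e^{2\pi i blc_0}| = 1$ for every $l$, so the system in the statement is obtained from $\{\sqrt{b}\,e^{2\pi i bl\xi}\}_{l\in\mathbb{Z}}$ by multiplying each member by a unimodular constant; this operation preserves the property of being a complete orthonormal system, so it suffices to treat the case $c_0 = 0$. There I would invoke the unitary dilation $U : L^2(0,1) \to L^2(0,1/b)$, $(Uf)(\xi) = \sqrt{b}\,f(b\xi)$, which is a surjective isometry and maps the classical trigonometric orthonormal basis $\{e^{2\pi i lt}\}_{l\in\mathbb{Z}}$ of $L^2(0,1)$ onto $\{\sqrt{b}\,e^{2\pi i bl\xi}\}_{l\in\mathbb{Z}}$; since a unitary operator carries orthonormal bases to orthonormal bases, completeness follows.

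I do not expect a genuine obstacle here: the only substantial ingredient is the completeness of the classical exponential system $\{e^{2\pi i lt}\}_{l\in\mathbb{Z}}$ in $L^2(0,1)$, which is standard (e.g.\ via the Stone--Weierstrass theorem on the circle together with density of continuous functions in $L^2$, or via Fej\'er's theorem). Should one prefer to avoid the dilation argument, the same conclusion can be reached directly: if $f \in L^2(0,1/b)$ is orthogonal to every $\sqrt{b}\,e^{2\pi i bl(\xi+c_0)}$, then all Fourier coefficients of the $(1/b)$-periodic extension of $f$ vanish, which forces $f = 0$ almost everywhere by the same classical uniqueness result.
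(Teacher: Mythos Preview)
Your proof is correct. Note, however, that the paper does not supply its own proof of this lemma: it is quoted verbatim from Christensen's book \cite[p.\,94]{ole} and used as a black box throughout. Your argument---direct verification of orthonormality, then reduction of completeness to the classical exponential basis on $L^2(0,1)$ via the unitary dilation $(Uf)(\xi)=\sqrt{b}\,f(b\xi)$ after stripping off the unimodular factor $e^{2\pi i blc_0}$---is the standard elementary proof and is exactly what one would find (or reconstruct) behind the cited reference. There is nothing to compare here beyond observing that you have written out what the paper merely cites.
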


\subsection{Discrete Vector-valued Nonuniform Signal Spaces}
In this section, we give necessary background for structure of discrete vector-valued nonuniform signal spaces and some related basic results. As is standard,
symbols $\mathbb{N},\mathbb{Z},\mathbb{R}$ and $\mathbb{C}$ signify, the  set of natural numbers, integers, real numbers and complex numbers respectively. For $S\in \mathbb{N}$, the $S$-copies of $\mathbb{C}$ denoted as $\mathbb{C}^S$  is the complex euclidean space with norm $||\bold{X}||_{\mathbb{C}^S}= \sqrt{\sum\limits_{k=1}^S\big|[\bold{X}]_k\big|^2  }$ for ${\bold{X}}=\big(\big[{\bold{X}}\big]_1,\big[{\bold{X}}\big]_2,\dots, \big[{\bold{X}}\big]_S \big)\in \mathbb{C}^S,$ where $[{\bold{X}}]_j$ denotes the $j^th$ element of vector ${\bold{X}}$. The arithmetic mean of vector $\bold{X}$, defined  as $\text{\larger[3]$\mu$}_{\bold{X}}=\frac{1}{S}\sum\limits_{k=1}^S[\bold{X}]_k$, which is a scalar valued giving the average value of all the entries of the given vector. Let $N\in \mathbb{N}$  and $r$ be an odd integer coprime with $N$ such that $1\le r\le 2N-1$, assume $\Lambda=\{0,\frac{r}{N}\}+2\mathbb{Z}$. The \emph{discrete vector-valued nonuniform  signal  space}, denoted by $\ell^2(\Lambda,\mathbb{C}^S)$, is defined as
\begin{align*}
\ell^2(\Lambda,\mathbb{C}^S): =\Big\{\bold{Z}=\{\bold{Z}(\lambda)\}_{\lambda\in \Lambda}=\Bigg\{ \begin{bmatrix}
[\bold{Z}(\lambda)]_1
\vspace{5pt}\\
[\bold{Z}(\lambda)]_2\\
\vdots
\vspace{5pt}\\
[\bold{Z}(\lambda)]_S
\end{bmatrix}  \Bigg\}_{\lambda\in \Lambda}\subset \mathbb{C}^S :\sum\limits_{\substack{\lambda\in \Lambda\\k=1,2,\dots,S}}\big|\big[\bold{Z}(\lambda)\big]_k\big|^2 <\infty    \Big\}.
\end{align*}	
The space $\ell^2(\Lambda,\mathbb{C}^S)$ is a Hilbert space with respect to the inner product:
\begin{align*}
\langle \bold{Z},\bold{W}\rangle &=\sum\limits_{\substack{\lambda\in \Lambda\\k=1,2,\dots,S}}[\bold{Z}(\lambda)]_k\overline{[\bold{W}(\lambda)]_k}, \ \ \  \bold{Z},\, \bold{W}\in  \ell^2(\Lambda,\mathbb{C}^S).
\end{align*}
Its associated norm is given by
\begin{align*}
||\bold{Z}||_{\ell^2}&=\sqrt{\sum\limits_{\substack{\lambda\in \Lambda\\k=1,2,\dots,S}}\big|\big[\bold{Z}(\lambda)\big]_k\big|^2}, \ \  \bold{Z} \,\in  \ell^2(\Lambda,\mathbb{C}^S).
\end{align*}
Note for $N=1$ and $S=1$, this space becomes $\ell^2(\mathbb{Z},\mathbb{C})$, the standard  discrete signal space.  The arithmetic mean sequence of vector-valued nonuniform sequence $\bold{Z}\in \ell^2(\Lambda,\mathbb{C}^S)$ is denoted as $\text{\larger[3]$\mu$}_{\bold{Z}}=\{\text{\larger[3]$\mu$}_{\bold{Z}}(\lambda)\}_{\lambda\in \Lambda}$, where
\begin{align*}
\text{\larger[3]$\mu$}_{\bold{Z}}(\lambda)=\frac{1}{S}\sum\limits_{k=1}^S[\bold{Z}(\lambda)]_k,  \,\,  \lambda\in \Lambda.
\end{align*}
Clearly, $\text{\larger[3]$\mu$}_{\bold{Z}}\in \ell^2(\Lambda,\mathbb{C})$. The sum of vector-valued sequence $\bold{Z}\in \ell^2(\Lambda,\mathbb{C}^S)$ is defined as
\begin{align*}
\sum\limits_{\lambda\in \Lambda}\bold{Z}(\lambda)=\begin{bmatrix}
\sum\limits_{\lambda\in \Lambda}\big[\bold{Z}(\lambda)\big]_1
\vspace{5pt}\\
\sum\limits_{\lambda\in \Lambda}\big[\bold{Z}(\lambda)\big]_2\\
\vdots
\vspace{5pt}\\
\sum\limits_{\lambda\in \Lambda}\big[\bold{Z}(\lambda)\big]_S\\
\end{bmatrix},
\end{align*}
provided each  coordinate series converges in $\mathbb{C}$. Now we consider some important operators related to our work. Let $\lambda\in \Lambda, M\in \mathbb{N}$ be fixed, $m\in \{0,1,\dots,M-1\}$ and $\bold{Z}\in \ell^2(\Lambda,\mathbb{C}^S)$, we have
\begin{enumerate}
	\item \texttt{Shift operator} :  $R_{2N\lambda}: \ell^2(\Lambda,\mathbb{C}^S)\to \ell^2(\Lambda,\mathbb{C}^S)$,
	\begin{align*}
	R_{2N\lambda}\bold{Z}=\Bigg\{\begin{bmatrix}
	\big[\bold{Z}(\lambda'-2N\lambda)\big]_1
	\vspace{5pt}\\
	\big[\bold{Z}(\lambda'-2N\lambda)\big]_2\\
	\vdots
	\vspace{5pt}\\
	\big[\bold{Z}(\lambda'-2N\lambda)\big]_S
	\end{bmatrix}\Bigg\}_{\lambda'\in \Lambda}, \ \bold{Z} \in \ell^2(\Lambda,\mathbb{C}^S).
	\end{align*}
	\item \texttt{Modulation operator} :  $ E_{\frac{m}{M}}:\ell^2(\Lambda,\mathbb{C}^S)\to \ell^2(\Lambda,\mathbb{C}^S)$,
	\begin{align*}
	E_{\frac{m}{M}} \bold{Z}=\Bigg\{\begin{bmatrix}
	e^{2\pi i \frac{m}{M}\lambda'}\big[\bold{Z}(\lambda')\big]_1
	\vspace{5pt}\\
	e^{2\pi i \frac{m}{M}\lambda'}\big[\bold{Z}(\lambda')\big]_2\\
	\vdots
	\vspace{5pt}\\
	e^{2\pi i \frac{m}{M}\lambda'}\big[\bold{Z}(\lambda')\big]_S
	\end{bmatrix}\Bigg\}_{\lambda'\in \Lambda}.
	\end{align*}
	\end{enumerate}
For any Lebesgue measurable subset $G\subset \mathbb{R}$, $L^2(G)$ denotes the space of all square integrable functions on $G$, which forms a Hilbert space with inner product $\langle f_1,f_2\rangle=\int\limits_{G}f_1(\xi)\overline{f_2(\xi)}\,d\xi,\,\, f_1,f_2\in L^2(G)$.

Gabardo  and Nashed \cite{GN1} considered  the following definition in the study of nonuniform  wavelets in the space $L^2(\mathbb{R})$.
\begin{defn} \cite{GN1}\label{SPDefn}
	Let $\Omega\subset\mathbb{R}$ be measurable and $\Lambda \subset \mathbb{R}$ a countable subset. If the collection $\{|\Omega|^{-\frac{1}{2}}e^{2\pi i \lambda\cdot} \chi_{\Omega}(\cdot) \}_{\lambda\in \Lambda}$ forms complete orthonormal system for  $L^2(\Omega)$, where $\chi_{\Omega}$ is indicator function on $\Omega$ and $|\Omega|$ is Lebesgue measure of $\Omega$, then the pair $(\Omega,\Lambda)$ is a  \emph{spectral pair}.
	\end{defn}
\begin{exa}\cite{GN1}\label{ex11}
	Let  $N \in \mathbb{N}$, $r $ be an odd integer  coprime to $N$ such that $1\le r \le2N-1$, and let   $\Lambda =\left\{0,\frac{r}{N}  \right\} +2\mathbb{Z} $ and $\Omega=[0,\frac{1}{2}[ \cup [\frac{N}{2},\frac{N+1}{2}[ $. Then,  $(\Omega,\Lambda)$ is a spectral pair.
\end{exa}

 Now, we define the corresponding vector-valued nonuniform function space  of a given discrete vector-valued nonuniform  signal  space $\ell^2(\Lambda,\mathbb{C}^S)$, the Fourier transformation  and   its  elementary properties.
  Let $\Lambda=\{0,\frac{r}{N}\}+2\mathbb{Z}$, where $N\in \mathbb{N}$, $r$ is an odd integer coprime with $N$ satisfying $1\le r\le 2N-1$  and $\Omega=[0,\frac{1}{2}[\cup[\frac{N}{2},\frac{N+1}{2}[$. The vector-valued nonuniform function space $L^2(\Omega,\mathbb{C}^S)$ is defined as follows:
\begin{align*}
L^2(\Omega,\mathbb{C}^S)=\Big\{\bold{F}=\bold{F}(\xi)= \begin{bmatrix}
\big[\bold{F}(\xi)\big]_1
\vspace{5pt}\\
\big[\bold{F}(\xi)\big]_2\\
\vdots
\vspace{5pt}\\
\big[\bold{F}(\xi)\big]_S
\end{bmatrix}:[\bold{F}(\xi)]_k:\Omega\to\mathbb{C}, \int\limits_{\Omega} \big|\big[\bold{F}(\xi)\big]_k\big|^2 <\infty,1\le k \le S  \Big\}.
\end{align*}
The space $L^2(\Omega,\mathbb{C}^S)$  is  a Hilbert space with respect to the inner product defined by
\begin{align*}
\langle \bold{F},\bold{G} \rangle&= \sum\limits_{k=1}^{S}\int\limits_{\Omega}\big[\bold{F}(\xi)\big]_k\overline{\big[\bold{G}(\xi)\big]_k}\,d\xi, \ \ \bold{F}, \bold{G} \in L^2(\Omega,\mathbb{C}^S).
\end{align*}
This inner product induces the following norm:
\begin{align*}
||\bold{F}||_{L^2}
&=\sqrt{\sum\limits_{k=1}^{S}\int\limits_{\Omega}\Big|\big[\bold{F}(\xi)\big]_k\Big|^2}, \ \bold{F} \in L^2(\Omega,\mathbb{C}^S).
\end{align*}
 The map  $\mathcal{F}:\ell^2(\Lambda,\mathbb{C}^S)\to L^2(\Omega,\mathbb{C}^S)$ defined  by
\begin{align*}
\mathcal{F}(\bold{Z})=\begin{bmatrix}
\sum\limits_{\lambda\in \Lambda}\big[\bold{Z}(\lambda)\big]_1\,e^{2\pi i \lambda\xi}
\vspace{5pt}\\
\sum\limits_{\lambda\in \Lambda}\big[\bold{Z}(\lambda)\big]_2\,e^{2\pi i \lambda\xi}\\
\vdots
\vspace{5pt}\\
\sum\limits_{\lambda\in \Lambda}\big[\bold{Z}(\lambda)\big]_S\, e^{2\pi i \lambda\xi}
\end{bmatrix}
\end{align*}
is called the \emph{Fourier transform} of $\bold{Z} \in \ell^2(\Lambda,\mathbb{C}^S)$. The Fourier transform $\mathcal{F}$   is well defined, since $(\Lambda,\Omega)$ is a spectral pair by Example \ref{ex11}, so each coordinate series will converge in $L^2(\Omega,\mathbb{C})$. Further $\mathcal{F}$ is bijective and its inverse, that is, inverse Fourier transform   $\mathcal{F}^{-1}:L^2(\Omega,\mathbb{C}^S)\to \ell^2(\Lambda,\mathbb{C}^S) $ is given by
\begin{align*}
\mathcal{F}^{-1}(\bold{F})=\begin{bmatrix}
\int\limits_{\Omega}\big[\bold{F}(\xi)\big]_1\,e^{-2\pi i \lambda\xi}\,d\xi
\vspace{5pt}\\
\int\limits_{\Omega}\big[\bold{F}(\xi)\big]_2\,e^{-2\pi i \lambda\xi}\,d\xi\\
\vdots
\vspace{5pt}\\
\int\limits_{\Omega}\big[\bold{F}(\xi)\big]_S\,e^{-2\pi i \lambda\xi}\,d\xi
\end{bmatrix}.
\end{align*}
\begin{rem}
	It can be easily observed that $2N \Lambda\subset2\mathbb{Z}\subset\Lambda$ and $\Lambda+2N\lambda=\Lambda$ for any $\lambda\in \Lambda.$
\end{rem}
Now we give some identities related to the Fourier transform. Let  $\bold{Z},\bold{W}\in \ell^2(\Lambda,\mathbb{C}^S)$ and $\lambda\in \Lambda$. Then,
\begin{enumerate}
	\item $\mathcal{F}(R_{2N\lambda} \bold{Z})(\xi)=e^{4\pi i N\lambda\xi}\mathcal{F}(\bold{Z})(\xi).   $
	\item $\langle \bold{Z}, \bold{W}\rangle_{\ell^2} = \langle \mathcal{F}(\bold{Z}), \mathcal{F}(\bold{W})\rangle_{L^2}  $ \ \quad (\emph{Parseval's relation}).
	\item $||\bold{Z}||_{\ell^2}=||\mathcal{F}(\bold{Z})||_{L^2}$ \ \quad (\emph{Plancherel's formula}).
	\item $\mathcal{F}\big(E_{\frac{m}{M}}R_{2N\lambda}\bold{W}\big)(\xi)=e^{4\pi iN\lambda(\frac{m}{M}+\xi) } \mathcal{F}\big(E_{\frac{m}{M}}\bold{W}\big)(\xi)  $.
\end{enumerate}
 We conclude this section with the following lemma; which can be proved by mathematical induction method.
 \begin{lem}\label{lemmahari}
For any  $z_1,z_2,\dots,z_t\in \mathbb{C}$, we have
 	$\Big|\sum\limits_{k=1}^{t}z_k\Big|^2\le2^{t-1}\sum\limits_{k=1}^{t}|z_k|^2$.
\end{lem}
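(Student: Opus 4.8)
The plan is to prove the inequality by induction on the number of summands $t$, with the two-term estimate
\begin{align*}
|w+z|^2 \le 2\big(|w|^2+|z|^2\big), \qquad w,z\in\mathbb{C},
\end{align*}
serving as the engine of the argument. This two-term bound follows immediately by expanding $|w+z|^2 = |w|^2 + 2\operatorname{Re}(w\overline{z}) + |z|^2$ and invoking $2\operatorname{Re}(w\overline{z}) \le 2|w||z| \le |w|^2+|z|^2$ (the last step being the AM–GM inequality, i.e.\ $(|w|-|z|)^2\ge 0$). So the whole proof reduces to iterating this elementary fact while keeping track of the constant.

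For the base case $t=1$ the asserted inequality reads $|z_1|^2 \le |z_1|^2$, which holds with equality; the case $t=2$ is exactly the two-term estimate above. For the inductive step, I would assume the inequality for some $t\ge 1$, take $z_1,\dots,z_{t+1}\in\mathbb{C}$, write $\sum_{k=1}^{t+1} z_k = \big(\sum_{k=1}^{t} z_k\big) + z_{t+1}$, and apply the two-term estimate followed by the induction hypothesis:
\begin{align*}
\Big|\sum_{k=1}^{t+1} z_k\Big|^2
&\le 2\Big(\Big|\sum_{k=1}^{t} z_k\Big|^2 + |z_{t+1}|^2\Big)
\le 2\Big(2^{t-1}\sum_{k=1}^{t}|z_k|^2 + |z_{t+1}|^2\Big)\\
&= 2^{t}\sum_{k=1}^{t}|z_k|^2 + 2|z_{t+1}|^2
\le 2^{t}\sum_{k=1}^{t+1}|z_k|^2,
\end{align*}
where the final inequality uses $2\le 2^{t}$ for $t\ge 1$. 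This closes the induction.

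There is essentially no genuine obstacle here; the only point needing a moment's care is the bookkeeping of constants in the inductive step, namely observing that the residual factor $2$ in front of $|z_{t+1}|^2$ is absorbed by $2^{t}$ precisely because $t\ge 1$, so that all terms can be collected under the single constant $2^{t}$. For completeness I would remark that there is also a one-line non-inductive proof via the Cauchy–Schwarz inequality, $\big|\sum_{k=1}^{t} z_k\big|^2 \le t\sum_{k=1}^{t}|z_k|^2$, together with the elementary estimate $t\le 2^{t-1}$ (itself a trivial induction), which in fact yields a sharper constant; but the inductive argument above is the one matching the statement as phrased and as used later in the paper.
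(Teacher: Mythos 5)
Your proof is correct and follows exactly the route the paper indicates: the paper merely remarks that the lemma ``can be proved by mathematical induction method'' without giving details, and your induction on $t$ via the two-term estimate $|w+z|^2\le 2(|w|^2+|z|^2)$ supplies precisely that argument, with the constant bookkeeping handled correctly. Your closing remark that Cauchy--Schwarz gives the sharper constant $t\le 2^{t-1}$ is also accurate, but it is an aside rather than a departure from the paper's approach.
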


\section{Frames of Discrete  Vector-valued     Nonuniform  Gabor  Systems}\label{Sect3}
This section deals with discrete vector-valued nonuniform Gabor frames in discrete vector-valued nonuniform signal spaces.
We begin with the  definition of a  discrete  vector-valued nonuniform  Gabor system in $\ell^2(\Lambda,\mathbb{C}^S)$.
\begin{defn}\label{DDEFNI}
Let $N,M,S \in \mathbb{N},P\in \mathbb{N}\cup \{0\} $ and  $r $ be an odd integer coprime with $N$ such that $1\le r \le 2N-1$. Let  $\Lambda =\left\{0,\frac{r}{N}  \right\} +2\mathbb{Z} $, $T_1=\{0,1,\dots,M-1\},T_2=\{0,1,\dots,P\}$ and $\{\bold{W}_j\}_{j\in T_2}\subset \ell^2(\Lambda,\mathbb{C}^S)$. A collection  of sequences of the form
	\begin{align}\label{system}
	\mathcal{G}(\Lambda,  T_1, T_2, \bold{W}_j) :=\big\{E_{\frac{m}{M}}R_{2N\lambda}\bold{W}_j :\lambda\in \Lambda,m\in T_1,j\in T_2\big\}
	\end{align}
is called a \emph{discrete  vector-valued nonuniform  Gabor system}  (DVNUG system,  in short) for discrete vector-valued nonuniform signal  space $\ell^2(\Lambda,\mathbb{C}^S)$;  and $\bold{W}_j$ $( j\in T_2)$  are called      \emph{ window sequences}.
\end{defn}

\begin{defn}\label{DefnnonI}
The collection of  sequences $\mathcal{G}(\Lambda,  T_1, T_2, \bold{W}_j)$, as   defined in \eqref{system}, is called a  \emph{discrete vector valued nonuniform  Gabor frame} (DVNUG frame, in short) for $\ell^2(\Lambda,\mathbb{C}^S)$, if there exist constants $0 < \alpha \leq \beta < \infty$  such that
\begin{align}\label{bessel}
\alpha ||\bold{Z}||_{\ell^2}^2\le \sum\limits_{\substack{ m\in T_1,j\in T_2\\  \lambda\in \Lambda}}\Big|\langle \bold{Z},E_{\frac{m}{M}}R_{2N\lambda}\bold{W}_j\rangle\Big|^2\le \beta ||\bold{Z}||_{\ell^2}^2\,\,\text{for all}\,\, \bold{Z}\in \ell^2(\Lambda,\mathbb{C}^S).
\end{align}
\end{defn}
The scalars $\alpha$ and $\beta$, respectively, known as  l\emph{ower frame bound} and \emph{upper frame bound}.  If only the right-hand side inequality in \eqref{bessel} holds, then we say       that $\mathcal{G}(\Lambda,  T_1, T_2, \bold{W}_j)$        is   a  \emph{DVNUG Bessel sequence}    with Bessel bound $\beta$.

Before giving the  operators associated with DVNUG frames for discrete vector-valued nonuniform signal space, we consider the  space
\begin{align*}
\ell^2(\Lambda,T_1,T_2,\mathbb{C}) :=\Big\{ \{a_{\lambda,m,j}\}_{\substack{ m\in T_1,j\in T_2\\  \lambda\in \Lambda}}\subset \mathbb{C}:\sum\limits_{\substack{ m\in T_1,j\in T_2\\  \lambda\in \Lambda}}|a_{\lambda,m,j}|^2<\infty  \Big\}.
\end{align*}
This space is a Hilbert space with respect to  the  following inner product.
\begin{align}\label{eqIP}
\Big\langle \{a_{\lambda,m,j}\}_{\substack{ m\in T_1,j\in T_2\\  \lambda\in \Lambda}}, \{b_{\lambda,m,j}\}_{\substack{ m\in T_1,j\in T_2\\  \lambda\in \Lambda}} \Big\rangle&=\sum\limits_{\substack{ m\in T_1,j\in T_2\\  \lambda\in \Lambda}}a_{\lambda,m,j}\overline{b_{\lambda,m,j}},
\end{align}
for $\{a_{\lambda,m,j}\}_{\substack{ m\in T_1,j\in T_2\\  \lambda\in \Lambda}}$, $\{b_{\lambda,m,j}\}_{\substack{ m\in T_1,j\in T_2\\  \lambda\in \Lambda}} \in \ell^2(\Lambda,T_1,T_2,\mathbb{C})$.
The norm associated with the  inner product given in \eqref{eqIP} is given by the following formula:
\begin{align*}
||\{a_{\lambda,m,j}\}_{\substack{ m\in T_1,j\in T_2\\  \lambda\in \Lambda}}||_{\ell^2}&=\sqrt{\sum\limits_{\substack{ m\in T_1,j\in T_2\\  \lambda\in \Lambda}}|a_{\lambda,m,j}|^2},  \ \ \  \{a_{\lambda,m,j}\}_{\substack{ m\in T_1,j\in T_2\\  \lambda\in \Lambda}} \in \ell^2(\Lambda,T_1,T_2,\mathbb{C}).
\end{align*}

If $\mathcal{G}(\Lambda,  T_1, T_2, \bold{W}_j)$  is  a  discrete vector valued nonuniform Bessel sequence    with Bessel bound $\beta$, then the  map $\mathcal{T}:\ell^2(\Lambda,T_1,T_2,\mathbb{C})\to \ell^2(\Lambda,\mathbb{C}^S)$  defined by
\begin{align*}
\mathcal{T}\Big(\{ a_{\lambda,m,j}\}_{\substack{ m\in T_1,j\in T_2\\  \lambda\in \Lambda}}\Big)=\sum\limits_{\substack{ m\in T_1,j\in T_2\\  \lambda\in \Lambda}}a_{\lambda,m,j}E_{\frac{m}{M}}R_{2N\lambda}\bold{W}_j,  \ \  \{ a_{\lambda,m,j}\}_{\lambda\in \Lambda, \, j\in S} \in \ell^2(\Lambda,T_1,T_2,\mathbb{C})
\end{align*}
is called the \emph{pre-frame operator} of $\mathcal{G}(\Lambda,  T_1, T_2, \bold{W}_j)$ . The pre-frame operator $\mathcal{T}$ is linear and bounded  with $||\mathcal{T}||\le \sqrt{\beta}$. The Hilbert-adjoint operator of $\mathcal{T}$ is  $\mathcal{T}^*:\ell^2(\Lambda,\mathbb{C}^S)\to \ell^2(\Lambda,T_1,T_2,\mathbb{C})$ is called the \emph{analysis operator},  is given by $\mathcal{T}^*(\bold{Z}) = \{\langle \bold{Z}, E_{\frac{m}{M}}R_{2N\lambda}\bold{W}_j\rangle\}_{\substack{ m\in T_1,j\in T_2\\  \lambda\in \Lambda}}$, $\bold{Z} \in \ell^2(\Lambda,\mathbb{C}^S)$. The composition $\Xi=\mathcal{T}\mathcal{T}^*:\ell^2(\Lambda,\mathbb{C}^S)\to \ell^2(\Lambda,\mathbb{C}^S)$  given by
\begin{align*}
\Xi(\bold{Z})=\sum\limits_{\substack{ m\in T_1,j\in T_2\\  \lambda\in \Lambda}}\langle \bold{Z},E_{\frac{m}{M}}R_{2N\lambda}\bold{W}_j\rangle E_{\frac{m}{M}}R_{2N\lambda}\bold{W}_j,\,\,\bold{Z}\in \ell^2(\Lambda,\mathbb{C}^S),
\end{align*}
is called  the \emph{frame operator }of $\mathcal{G}(\Lambda,  T_1, T_2, \bold{W}_j)$.  The frame operator $\Xi$ is bounded and linear; if  $\mathcal{G}(\Lambda,  T_1, T_2, \bold{W}_j)$  is  a DVNUG frame for $\ell^2(\Lambda,\mathbb{C}^S)$,  then  $\Xi$ is invertible on $\ell^2(\Lambda,\mathbb{C}^S)$.

Throughout the entire paper, unless otherwise stated, we assume  $N$, $M$, $S \in \mathbb{N}$, $P\in \mathbb{N}\cup \{0\} $ and  $r $ be an odd integer coprime with $N$ such that $1\le r \le 2N-1$,  $\Lambda =\left\{0,\frac{r}{N}  \right\} +2\mathbb{Z},\, \Omega=[0,\frac{1}{2}[\cup [\frac{N}{2},\frac{N+1}{2}[$, $T_1=\{0,1,\dots,M-1\}$, $T_2=\{0,1,\dots,P\}$ and $\mathcal{G}(\Lambda,  T_1, T_2, \bold{W}_j)=\big\{E_{\frac{m}{M}}R_{2N\lambda}\bold{W}_j :\lambda\in \Lambda,m\in T_1,j\in T_2\big\}$, where $\bold{W}_j\in \ell^2(\Lambda,\mathbb{C}^S)$.

\vspace{8pt}

The following result  provides a  sufficient condition for the existence of   DVNUG Bessel sequences in discrete vector-valued nonuniform signal spaces,  with  explicit Bessel bounds,  in terms of Fourier transformations of the modulated window sequences.
\begin{thm}\label{th1}
	Let $\{\bold{W}_j\}_{j\in T_2} \subset \ell^2(\Lambda,\mathbb{C}^S)$. For   $m\in T_1$,  $j\in T_2$  and  a.e. $\xi\in \Omega$, let
\begin{align*}
\Big|\Big|\mathcal{F}(E_{\frac{m}{M}}\bold{W}_j)(\xi)\Big|\Big|_{\mathbb{C}^S}\le B_0<\infty,
\end{align*}
for some positive constant $B_0$. Then, $\mathcal{G}(\Lambda,  T_1, T_2, \bold{W}_j)$  is  a DVNUG Bessel sequence with Bessel bound $2^{(M+P)} B_0^2S$.
\end{thm}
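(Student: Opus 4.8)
The plan is to estimate the frame-type sum directly using the Fourier transform and the orthonormality built into the spectral pair. First I would fix $\bold{Z}\in \ell^2(\Lambda,\mathbb{C}^S)$ and, using the Fourier identity $\mathcal{F}(E_{\frac{m}{M}}R_{2N\lambda}\bold{W}_j)(\xi)=e^{4\pi iN\lambda(\frac{m}{M}+\xi)}\mathcal{F}(E_{\frac{m}{M}}\bold{W}_j)(\xi)$ together with Parseval's relation, rewrite each inner product $\langle \bold{Z},E_{\frac{m}{M}}R_{2N\lambda}\bold{W}_j\rangle$ as an integral over $\Omega$:
\begin{align*}
\langle \bold{Z},E_{\frac{m}{M}}R_{2N\lambda}\bold{W}_j\rangle
=\sum_{k=1}^{S}\int_{\Omega}\big[\mathcal{F}(\bold{Z})(\xi)\big]_k\,\overline{\big[\mathcal{F}(E_{\frac{m}{M}}\bold{W}_j)(\xi)\big]_k}\,e^{-4\pi iN\lambda(\frac{m}{M}+\xi)}\,d\xi.
\end{align*}
The key point is that for fixed $m,j$ the scalar-valued function $\xi\mapsto \sum_{k}\big[\mathcal{F}(\bold{Z})(\xi)\big]_k\,\overline{\big[\mathcal{F}(E_{\frac{m}{M}}\bold{W}_j)(\xi)\big]_k}$ lies in $L^1(\Omega)$ (indeed in $L^2(\Omega)$ by the Cauchy--Schwarz bound and the hypothesis $\|\mathcal{F}(E_{\frac{m}{M}}\bold{W}_j)(\xi)\|_{\mathbb{C}^S}\le B_0$), so summing $|\langle \bold{Z},E_{\frac{m}{M}}R_{2N\lambda}\bold{W}_j\rangle|^2$ over $\lambda\in\Lambda$ is a Bessel/Parseval estimate for the system $\{e^{-4\pi iN\lambda\xi}\}_{\lambda\in\Lambda}$, i.e.\ $\{e^{2\pi i\lambda\xi'}\}_{\lambda\in\Lambda}$ after the change of variables $\xi'=-2N\xi$.

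Next I would make this precise by relating $\Lambda$ to an exponential system on which we have completeness. Since $(\Omega,\Lambda)$ is a spectral pair (Example \ref{ex11}) with $|\Omega|=1$, the family $\{e^{2\pi i\lambda\cdot}\chi_\Omega\}_{\lambda\in\Lambda}$ is a complete orthonormal system for $L^2(\Omega)$, hence in particular a Parseval frame; alternatively one may use that $\Lambda\subset \frac{1}{N}\mathbb{Z}$ and invoke Lemma \ref{naya} on the enlarged interval, but the cleanest route is to apply Bessel's inequality for the orthonormal system directly to the $L^2(\Omega)$ function $g_{m,j}(\xi):=e^{-4\pi iN\lambda_0\xi}\cdot(\text{pairing})$ — actually since the exponentials appearing are $e^{-4\pi iN\lambda\xi}$ with $2N\lambda$ ranging over $2N\Lambda\subset 2\mathbb{Z}\subset\Lambda$, I would absorb the $e^{-4\pi iN\lambda\,m/M}$ phase into the coefficient and conclude
\begin{align*}
\sum_{\lambda\in\Lambda}\Big|\langle \bold{Z},E_{\frac{m}{M}}R_{2N\lambda}\bold{W}_j\rangle\Big|^2
\le \Big\|\,\xi\mapsto\sum_{k=1}^{S}\big[\mathcal{F}(\bold{Z})(\xi)\big]_k\,\overline{\big[\mathcal{F}(E_{\frac{m}{M}}\bold{W}_j)(\xi)\big]_k}\,\Big\|_{L^2(\Omega)}^2,
\end{align*}
using Bessel's inequality for the orthonormal exponentials indexed by $2N\Lambda$ (a sub-collection of those indexed by $\Lambda$, since $2N\Lambda\subset\Lambda$). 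Then Cauchy--Schwarz in $\mathbb{C}^S$ together with Lemma \ref{lemmahari} applied to the $S$-term sum, and the hypothesis $\|\mathcal{F}(E_{\frac{m}{M}}\bold{W}_j)(\xi)\|_{\mathbb{C}^S}\le B_0$, gives the pointwise bound $\big|\sum_k [\mathcal{F}(\bold{Z})(\xi)]_k\overline{[\mathcal{F}(E_{\frac{m}{M}}\bold{W}_j)(\xi)]_k}\big|^2\le 2^{S-1}B_0^2\sum_k|[\mathcal{F}(\bold{Z})(\xi)]_k|^2$; integrating over $\Omega$ and using Plancherel, $\|\mathcal{F}(\bold{Z})\|_{L^2}=\|\bold{Z}\|_{\ell^2}$, bounds the $\lambda$-sum by $2^{S-1}B_0^2\|\bold{Z}\|_{\ell^2}^2$.

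Finally I would sum over $m\in T_1$ and $j\in T_2$, which are finite sets of cardinality $M$ and $P+1$ respectively, obtaining the bound $M(P+1)\,2^{S-1}B_0^2\|\bold{Z}\|_{\ell^2}^2$, and then observe $M(P+1)2^{S-1}\le 2^M\cdot 2^P\cdot 2^{S-1}\le 2^{M+P}S$ is too lossy in one direction but the intended constant $2^{(M+P)}B_0^2S$ is obtained by instead using Lemma \ref{lemmahari} on the combined $m,j$-sum of cardinality $\le 2^{M+P}$ at the level of the inner products (rather than finite-additivity) — more precisely, the cleanest derivation is to note $\sum_{m,j}(\cdots)\le (\#T_1\cdot\#T_2)\max_{m,j}(\cdots)$ and bound each term by $S B_0^2\|\bold Z\|^2$ after replacing the $2^{S-1}$ from Lemma \ref{lemmahari} by the exact Cauchy--Schwarz constant $S$, then use $\#T_1\cdot\#T_2=M(P+1)\le 2^{M+P}$. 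The routine bookkeeping of which finite cardinality estimate feeds the exponent $M+P$ versus the factor $S$ is where I would take care to match the author's stated constant; I expect the only genuine subtlety is the passage from the abstract "Bessel inequality for $\{e^{2\pi i\lambda\xi}\}_{\lambda\in\Lambda}$" to the concrete sum over $2N\Lambda$, which relies on the remark that $2N\Lambda\subset 2\mathbb{Z}\subset\Lambda$ so that these exponentials genuinely form a subfamily of an orthonormal system for $L^2(\Omega)$, and everything else is Cauchy--Schwarz, Lemma \ref{lemmahari}, Plancherel, and counting.
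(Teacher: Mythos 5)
Your argument is correct, but it proves the Bessel property from the analysis side, whereas the paper works from the synthesis side: the paper takes a finite sequence $\{a_{\lambda,m,j}\}$, bounds $\big\|\sum a_{\lambda,m,j}E_{\frac{m}{M}}R_{2N\lambda}\bold{W}_j\big\|_{\ell^2}$ via Plancherel, the triangle inequality over $(m,j)$, and the orthonormality of $\{e^{4\pi iN\lambda(\frac{m}{M}+\xi)}\}_{\lambda\in\Lambda}$, and then invokes Proposition \ref{neww} together with Lemma \ref{lemmahari} to convert the synthesis bound into a Bessel bound. You instead fix $\bold{Z}$, write each $\langle \bold{Z},E_{\frac{m}{M}}R_{2N\lambda}\bold{W}_j\rangle$ as $e^{-4\pi iN\lambda m/M}\langle g_{m,j},e^{2\pi i(2N\lambda)\cdot}\rangle_{L^2(\Omega)}$ with $g_{m,j}(\xi)=\sum_k[\mathcal{F}(\bold{Z})(\xi)]_k\overline{[\mathcal{F}(E_{\frac{m}{M}}\bold{W}_j)(\xi)]_k}$, and apply Bessel's inequality for the orthonormal subfamily indexed by $2N\Lambda\subset\Lambda$ (the injectivity of $\lambda\mapsto 2N\lambda$ and the inclusion $2N\Lambda\subset 2\mathbb{Z}\subset\Lambda$ are exactly the points you flag, and they do hold), then Cauchy--Schwarz in $\mathbb{C}^S$ and Plancherel. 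Both routes are legitimate duals of one another; yours actually yields the sharper constant $M(P+1)B_0^2$, and since $M(P+1)\le 2^{M+P}\le 2^{M+P}S$ the stated bound $2^{(M+P)}B_0^2S$ follows a fortiori, so your closing agonizing over how to reproduce the author's exact constant is unnecessary --- the theorem only asserts that $2^{(M+P)}B_0^2S$ \emph{is} a Bessel bound, and any smaller bound implies it. Two cosmetic slips: Cauchy--Schwarz on the $\mathbb{C}^S$ pairing gives constant $1$, not $S$ (the $S$ would come from $|\sum_{k=1}^S z_k|^2\le S\sum_k|z_k|^2$, which you do not actually need); and the exponential factor $2^{M+P}$ in the paper arises only because the synthesis-side argument pays a triangle inequality over the $M(P+1)$ pairs $(m,j)$ before squaring --- your analysis-side argument sums nonnegative terms and never incurs that loss.
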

\begin{proof}
 Let  $\{a_{\lambda,m,j}\}$  be any finite sequence  in $\ell^2(\Lambda,T_1,T_2,\mathbb{C})$. Then, we have
	
	\begin{align}\label{a}
	\Big|\Big|\sum\limits_{\substack{ m\in T_1,j\in T_2\\  \lambda\in \Lambda}}a_{\lambda,m,j}E_{\frac{m}{M}}R_{2N\lambda}\bold{W}_j\Big|\Big|_{\ell^2}&=\Big|\Big|\mathcal{F}\Big(\sum\limits_{\substack{ m\in T_1,j\in T_2\\  \lambda\in \Lambda}}a_{\lambda,m,j}E_{\frac{m}{M}}R_{2N\lambda}\bold{W}_j\Big)\Big|\Big|_{L^2}\nonumber\\
	&\le \sum\limits_{\substack{ m\in T_1\\j\in T_2}}\Big|\Big|\sum\limits_{\lambda\in \Lambda}a_{\lambda,m,j}e^{4\pi i N\lambda(\frac{m}{M}+\xi)}\mathcal{F}(E_{\frac{m}{M}}\bold{W}_j)(\xi) \Big|\Big|_{L^2}\nonumber\\
	&=\sum\limits_{\substack{ m\in T_1\\j\in T_2}}\sqrt{\sum\limits_{k=1}^S\int\limits_{\Omega}\Big|\sum\limits_{\lambda\in \Lambda}a_{\lambda,m,j}e^{4\pi i N\lambda(\frac{m}{M}+\xi)}\Big[\mathcal{F}(E_{\frac{m}{M}}\bold{W}_j)(\xi)\Big]_k\Big|^2 d\,\xi}\nonumber\\
		&=B_0 \sqrt{S}\sum\limits_{\substack{ m\in T_1\\j\in T_2}}\sqrt{\int\limits_{\Omega}\Big|\sum\limits_{\lambda\in \Lambda}a_{\lambda,m,j}e^{4\pi i N\lambda(\frac{m}{M}+\xi)}\Big|^2 d\,\xi  }.
	\end{align}
Using \eqref{a} and the fact $\{e^{4\pi i N\lambda(\frac{m}{M}+\xi)} \}_{\lambda\in\Lambda}$ is  orthonormal system in $L^2(\Omega)$, we have
	\begin{align}\label{b}
\Big|\Big|\sum\limits_{\substack{ m\in T_1,j\in T_2\\  \lambda\in \Lambda}}a_{\lambda,m,j}E_{\frac{m}{M}}R_{2N\lambda}\bold{W}_j\Big|\Big|_{\ell^2}  &\le B_0\sqrt{S} \sum\limits_{\substack{ m\in T_1\\j\in T_2}}\sqrt{\sum\limits_{\lambda\in \Lambda} |a_{\lambda,m,j}|^2 }.
	\end{align}
	Therefore by \eqref{b} and Lemma \ref{lemmahari}, we get
	\begin{align}
		\label{newq}
	\Big|\Big|\sum\limits_{\substack{ m\in T_1,j\in T_2\\  \lambda\in \Lambda}}a_{\lambda,m,j}E_{\frac{m}{M}}R_{2N\lambda}\bold{W}_j\Big|\Big|^2_{\ell^2} & \le \Bigg( B_0\sqrt{S} \sum\limits_{\substack{ m\in T_1\\j\in T_2}}\sqrt{\sum\limits_{\lambda\in \Lambda} |a_{\lambda,m,j}|^2 }\Bigg)^2 \nonumber \\
		                                                                                                                                                                                          & \le 2^{(M+P)}B_0^2 S\sum\limits_{\substack{ m\in T_1\\j\in T_2}}\sum\limits_{\lambda\in \Lambda}|a_{\lambda,m,j}|^2.
	\end{align}
By invoking Proposition \ref{neww} 	 and   Ineq. \eqref{newq}, we conclude that $\mathcal{G}(\Lambda,  T_1, T_2, \bold{W}_j)$ is a Bessel sequence with the desired Bessel bound.
	\end{proof}
The following example illustrates Theorem \ref{th1}.
\begin{exa}\label{bol}
	Let $ N=2$, $r=1$, $S=2$, $M=2$ and $P=7$. Then,  $\Lambda=\{0,\frac{1}{2}\}+2\mathbb{Z}$,  $\Omega=[0,\frac{1}{2}[ \cup [1,\frac{3}{2}[$, $T_1=\{0,1\}$ and $T_2=\{ 0,1,\dots,7\}$. For $j\in T_2$, define
$\bold{W}_j=\{\bold{W}_j(\lambda) \}_{\lambda\in \Lambda}\in \ell^2(\Lambda,\mathbb{C}^2) $ as follows:
\begin{align*}
&\bold{W}_0(0)=\begin{bmatrix}
1\\
0
\end{bmatrix}, \,\, \,\, \bold{W}_0(4)=\begin{bmatrix}
	1\\
	0
	\end{bmatrix}  ,\,\,\,\, \bold{W}_0(\lambda)=\begin{bmatrix}
	0\\
	0
	\end{bmatrix} \,\text{for}\,\lambda\in \Lambda\setminus\{0,4\};\\
	&\bold{W}_1(0)=\begin{bmatrix}
	1\\
	0
	\end{bmatrix}, \,\, \,\, \bold{W}_1(4)=\begin{bmatrix}
	-1\\
	0
	\end{bmatrix},  \,\,\,\, \bold{W}_1(\lambda)=\begin{bmatrix}
	0\\
	0
	\end{bmatrix} \,\text{for}\,\lambda\in \Lambda\setminus\{0,4\};\\
	&\bold{W}_2(0)=\begin{bmatrix}
	0\\
	1
	\end{bmatrix}, \,\, \,\,  \bold{W}_2(4)=\begin{bmatrix}
	0\\
	1
	\end{bmatrix}  ,\,\,\,\, \bold{W}_2(\lambda)=\begin{bmatrix}
	0\\
	0
	\end{bmatrix} \,\text{for}\,\lambda\in \Lambda\setminus\{0,4\};\\
	&\bold{W}_3(0)=\begin{bmatrix}
	0\\
	1
	\end{bmatrix}, \,\, \,\, \bold{W}_3(4)=\begin{bmatrix}
	0\\
	-1
	\end{bmatrix}  ,\,\,\,\, \bold{W}_3(\lambda)=\,\begin{bmatrix}
	0\\
	0
	\end{bmatrix} \,\text{for}\,\lambda\in \Lambda\setminus\{0,4\};\\
	&\bold{W}_4\Big(\frac{1}{2}\Big)=\begin{bmatrix}
	1\\
	0
	\end{bmatrix}, \,\, \,\, \bold{W}_4\Big(\frac{1}{2}+4\Big)=\begin{bmatrix}
	1\\
	0
	\end{bmatrix}, \,\, \,\, \bold{W}_4(\lambda)=\begin{bmatrix}
	0\\
	0
	\end{bmatrix} \,\text{for}\,\lambda\in \Lambda\setminus\Big\{\frac{1}{2},\frac{1}{2}+4\Big\};\\
	&\bold{W}_5\Big(\frac{1}{2}\Big)=\begin{bmatrix}
	1\\
	0
	\end{bmatrix}, \bold{W}_5\Big(\frac{1}{2}+4\Big)=\begin{bmatrix}
	-1\\
	0
	\end{bmatrix}, \,\,\,\, \bold{W}_5(\lambda)=\begin{bmatrix}
	0\\
	0
	\end{bmatrix} \,\text{for}\,\lambda\in \Lambda\setminus\Big\{\frac{1}{2},\frac{1}{2}+4\Big\};\\
	&\bold{W}_6\Big(\frac{1}{2}\Big)=\begin{bmatrix}
	0\\
	1
	\end{bmatrix}, \,\, \,\, \bold{W}_6\Big(\frac{1}{2}+4\Big)=\begin{bmatrix}
	0\\
	1
	\end{bmatrix},\,\, \,\,  \bold{W}_6(\lambda)=\begin{bmatrix}
	0\\
	0
	\end{bmatrix}\,\text{for}\,\lambda\in \Lambda\setminus\Big\{\frac{1}{2},\frac{1}{2}+4\Big\};\\
	&\bold{W}_7\Big(\frac{1}{2}\Big)=\begin{bmatrix}
	0\\
	1
	\end{bmatrix}, \,\, \,\, \bold{W}_7\Big(\frac{1}{2}+4\Big)=\begin{bmatrix}
	0\\
	-1
	\end{bmatrix},  \, \,\, \bold{W}_7(\lambda)=\begin{bmatrix}
	0\\
	0
	\end{bmatrix} \,\text{for}\,\lambda\in \Lambda\setminus\Big\{\frac{1}{2},\frac{1}{2}+4\Big\}.
	\end{align*}	
For $m\in T_1=\{0,1 \},j\in T_2  $, the system  $E_{\frac{m}{2}}\bold{W}_j=\big\{(E_{\frac{m}{2}}\bold{W}_j)(\lambda)\big\}_{\lambda\in \Lambda} $
	are as follows:
	\begin{align*}
	&\big(E_{\frac{m}{2}} \bold{W}_0\big)(0)=\begin{bmatrix}
	1\\
	0
	\end{bmatrix},\, \,\,\, \big(E_{\frac{m}{2}} \bold{W}_0\big)(4)=\begin{bmatrix}
	1\\
	0
	\end{bmatrix}, \,\,\,\,  \big(E_{\frac{m}{2}} \bold{W}_0\big)(\lambda)=\begin{bmatrix}
	0\\
	0
	\end{bmatrix} \,\text{for}\,\lambda\in \Lambda\setminus\{0,4\};\\
	&\big(E_{\frac{m}{2}} \bold{W}_1\big)(0)=\begin{bmatrix}
	1\\
	0
	\end{bmatrix},\, \,\,\,  \big(E_{\frac{m}{2}} \bold{W}_1\big)(4)=\begin{bmatrix}
	-1\\
	0
	\end{bmatrix}, \,\,\,\, \big(E_{\frac{m}{2}} \bold{W}_1\big)(\lambda)=\begin{bmatrix}
	0\\
	0
	\end{bmatrix} \,\text{for}\,\lambda\in \Lambda\setminus\{0,4\};\\
	&\big(E_{\frac{m}{2}} \bold{W}_2\big)(0)=\begin{bmatrix}
	0\\
	1
	\end{bmatrix},\, \,\,\, \big(E_{\frac{m}{2}} \bold{W}_2\big)(4)\,\,=\,\,\begin{bmatrix}
	0\\
	1
	\end{bmatrix}  ,\,\, \,\,  \big(E_{\frac{m}{2}} \bold{W}_2\big)(\lambda)=\begin{bmatrix}
	0\\
	0
	\end{bmatrix} \,\text{for}\,\lambda\in \Lambda\setminus\{0,4\};\\
	&\big(E_{\frac{m}{2}} \bold{W}_3\big)(0)=\begin{bmatrix}
	0\\
	1
	\end{bmatrix},\, \,\,\,\,   \big(E_{\frac{m}{2}} \bold{W}_3\big)(4)=\begin{bmatrix}
	0\\
	-1
	\end{bmatrix}  ,\,\, \,\, \big(E_{\frac{m}{2}} \bold{W}_3\big)(\lambda)=\begin{bmatrix}
	0\\
	0
	\end{bmatrix}\,\text{for}\,\lambda\in \Lambda\setminus\{0,4\}; \\
	&\big(E_{\frac{m}{2}} \bold{W}_4\big)\Big(\frac{1}{2}\Big)=\begin{bmatrix}
	e^{\pi i \frac{m}{2}}\\
	0
	\end{bmatrix},  \,\,  \big(E_{\frac{m}{2}} \bold{W}_4\big)\Big(\frac{1}{2}+4\Big)=\begin{bmatrix}
	e^{\pi i \frac{m}{2}}\\
	0
	\end{bmatrix}, \,\, \,\,  \big(E_{\frac{m}{2}} \bold{W}_4\big)(\lambda)=\begin{bmatrix}
	0\\
	0
	\end{bmatrix} \text{for}\,\lambda\in \Lambda\setminus\Big\{\frac{1}{2},\frac{1}{2}+4\Big\};\\
	&\big(E_{\frac{m}{2}} \bold{W}_5\big)\Big(\frac{1}{2}\Big)=\begin{bmatrix}
	e^{\pi i \frac{m}{2}}\\
	0
	\end{bmatrix}, \big(E_{\frac{m}{2}} \bold{W}_5\big)\Big(\frac{1}{2}+4\Big)=\begin{bmatrix}
	-e^{\pi i \frac{m}{2}}\\
	0
	\end{bmatrix}, \,\, \,\, \big(E_{\frac{m}{2}} \bold{W}_5\big)(\lambda)=\begin{bmatrix}
	0\\
	0
	\end{bmatrix} \text{for}\,\lambda\in \Lambda\setminus\Big\{\frac{1}{2},\frac{1}{2}+4\Big\};\\
	&\big(E_{\frac{m}{2}} \bold{W}_6\big)\Big(\frac{1}{2}\Big)=\begin{bmatrix}
	0\\
	e^{\pi i \frac{m}{2}}
	\end{bmatrix},\big(E_{\frac{m}{2}} \bold{W}_6\big)\Big(\frac{1}{2}+4\Big)=\begin{bmatrix}
	0\\
	e^{\pi i \frac{m}{2}}
	\end{bmatrix},  \,\, \,\,  \big(E_{\frac{m}{2}} \bold{W}_6\big)(\lambda)=\begin{bmatrix}
	0\\
	0
	\end{bmatrix} \text{for}\,\lambda\in \Lambda\setminus\Big\{\frac{1}{2},\frac{1}{2}+4\Big\};\\
	&\big(E_{\frac{m}{2}} \bold{W}_7\big)\Big(\frac{1}{2}\Big)=\begin{bmatrix}
	0\\
	e^{\pi i \frac{m}{2}}
	\end{bmatrix},\big(E_{\frac{m}{2}} \bold{W}_7\big)\Big(\frac{1}{2}+4\Big)=\begin{bmatrix}
	0\\
	-e^{\pi i \frac{m}{2}}
	\end{bmatrix},\,\, \,\, \big(E_{\frac{m}{2}} \bold{W}_7\big)(\lambda)=\begin{bmatrix}
	0\\
	0
	\end{bmatrix} \,\text{for}\,\lambda\in \Lambda\setminus\Big\{\frac{1}{2},\frac{1}{2}+4\Big\}.
	\end{align*}	
	Now for $m\in T_1$ and $j\in T_2$, the corresponding Fourier transforms $\mathcal{F}(E_{\frac{m}{M}}\bold{W}_j)(\xi)$
	are given by:	
	\begin{align*}
	&\mathcal{F}(E_{\frac{m}{M}}\bold{W}_0)(\xi)=\begin{bmatrix}
	1+e^{8\pi i \xi}\\
	0
	\end{bmatrix},\,\,\,\,\,\,\,\,\,\,\quad\quad\,\,\,\,\quad \mathcal{F}(E_{\frac{m}{M}}\bold{W}_1)(\xi)=\begin{bmatrix}
	1-e^{8\pi i \xi}\\
	0
	\end{bmatrix},\\ &\mathcal{F}(E_{\frac{m}{M}}\bold{W}_2)(\xi)=\begin{bmatrix}
	0\\
	1+e^{8\pi i \xi}
	\end{bmatrix},
	\,\,\,\,\,\,\,\,\,\,\quad\quad\,\,\,\,\quad\mathcal{F}(E_{\frac{m}{M}}\bold{W}_3)(\xi)=\begin{bmatrix}
	0\\
	1-e^{8\pi i \xi}
	\end{bmatrix},\\ &\mathcal{F}(E_{\frac{m}{M}}\bold{W}_4)(\xi)=\begin{bmatrix}
	e^{\pi i\frac{m}{2}}(e^{\pi i \xi}+e^{9\pi i\xi})\\
	0
	\end{bmatrix}, \,\,\,\,\,\mathcal{F}(E_{\frac{m}{M}}\bold{W}_5)(\xi)=\begin{bmatrix}
	e^{\pi i\frac{m}{2}}(e^{\pi i \xi}-e^{9\pi i\xi})\\
	0
	\end{bmatrix},\\
	&
	\mathcal{F}(E_{\frac{m}{M}}\bold{W}_6)(\xi)=\begin{bmatrix}
	0\\
	e^{\pi i\frac{m}{2}}(e^{\pi i \xi}+e^{9\pi i\xi})
	\end{bmatrix}, \ \text{and} \ \,\,\,\mathcal{F}(E_{\frac{m}{M}}\bold{W}_7)(\xi)=\begin{bmatrix}
	0\\
	e^{\pi i\frac{m}{2}}(e^{\pi i \xi}-e^{9\pi i\xi})
	\end{bmatrix}.	
	\end{align*}
By using Lemma \ref{lemmahari}, it is easy to calculate  for $m\in T_1, j\in T_2$ that,
\begin{align*}
\Big|\Big|\mathcal{F}(E_{\frac{m}{M}}\bold{W}_j)(\xi)\Big|\Big|_{\mathbb{C}^S}\le 2.
\end{align*}
Thus,  by Theorem \ref{th1}, the DVNUG system  $\mathcal{G}(\Lambda,  T_1, T_2, \bold{W}_j)=\big \{E_{\frac{m}{2}}R_{4\lambda}\bold{W}_j:m\in T_1,j\in T_2,\lambda \in \{0,\frac{1}{2}\}+2\mathbb{Z}  \big \}$ is a Bessel sequence with Bessel  bound $2^{12}$.
\end{exa}

The next result  gives  a  necessary conditions for DVNUG Bessel sequence in discrete vector-valued nonuniform signal spaces.
\begin{thm}\label{th2}
Let $\{\bold{W}_j\}_{j\in T_2} \subset \ell^2(\Lambda,\mathbb{C}^S)$ be such that  $\mathcal{G}(\Lambda,  T_1, T_2, \bold{W}_j)$ is a DVNUG Bessel sequence with Bessel bound $B_0$ for $\ell^2(\Lambda,\mathbb{C}^S)$.  Then,  for $m\in T_1$, $j\in T_2$ and $a.e.\,\xi \in \Omega$, we have,
	\begin{align*}
	\Big|\Big|\mathcal{F}(E_{\frac{m}{M}}\bold{W}_j)(\xi)\Big|\Big|_{\mathbb{C}^S}\le 2\sqrt{NB_0}.
	\end{align*}
	
\end{thm}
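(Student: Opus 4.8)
The plan is to probe the Bessel inequality with sequences whose Fourier transforms are supported on a short subinterval $I$ of $\Omega$, exploiting the fact that over such an $I$ a suitable \emph{subcollection} of the modulated--shift exponentials $\{e^{4\pi iN\lambda(\frac{m}{M}+\cdot)}\}_{\lambda\in\Lambda}$ is not merely orthonormal but \emph{complete}; this upgrades the one--sided Bessel estimate into a two--sided control of $\mathcal{F}(E_{\frac{m}{M}}\bold{W}_j)$ on $I$, from which the pointwise bound follows.

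Fix $m_0\in T_1$, $j_0\in T_2$ and set $\bold{V}:=\mathcal{F}(E_{\frac{m_0}{M}}\bold{W}_{j_0})\in L^2(\Omega,\mathbb{C}^S)$. Since $\Omega=[0,\frac{1}{2}[\cup[\frac{N}{2},\frac{N+1}{2}[$ splits into $4N$ consecutive intervals of length $\frac{1}{4N}$, it suffices to prove the estimate a.e.\ on each such interval $I$. For an arbitrary $\bold{F}\in L^2(\Omega,\mathbb{C}^S)$ supported on $I$, put $\bold{Z}:=\mathcal{F}^{-1}(\bold{F})$ and $H(\xi):=\langle\bold{F}(\xi),\bold{V}(\xi)\rangle_{\mathbb{C}^S}$ (again supported on $I$). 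Combining Parseval's relation with the identity $\mathcal{F}(E_{\frac{m_0}{M}}R_{2N\lambda}\bold{W}_{j_0})(\xi)=e^{4\pi iN\lambda(\frac{m_0}{M}+\xi)}\bold{V}(\xi)$, a direct computation gives, for every $k\in\mathbb{Z}$ (so that $\lambda=2k\in2\mathbb{Z}\subset\Lambda$),
\[
\big\langle\bold{Z},\,E_{\frac{m_0}{M}}R_{4Nk}\bold{W}_{j_0}\big\rangle=\int_{\Omega}H(\xi)\,e^{-2\pi i(4N)k(\xi+\frac{m_0}{M})}\,d\xi=\big\langle H,\,e^{2\pi i(4N)k(\cdot+\frac{m_0}{M})}\big\rangle_{L^2(I)}.
\]
The functions $e^{2\pi i(4N)k\cdot}$ are $\frac{1}{4N}$--periodic, so by Lemma \ref{naya} (with $b=4N$, $c_0=\frac{m_0}{M}$) together with this periodicity, $\{\sqrt{4N}\,e^{2\pi i(4N)k(\cdot+\frac{m_0}{M})}\}_{k\in\mathbb{Z}}$ is a complete orthonormal system of $L^2(I)$; Parseval in $L^2(I)$ then yields
\[
\sum_{k\in\mathbb{Z}}\big|\langle\bold{Z},E_{\frac{m_0}{M}}R_{4Nk}\bold{W}_{j_0}\rangle\big|^2=\frac{1}{4N}\,\|H\|_{L^2(\Omega)}^2 .
\]

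On the other hand $\{E_{\frac{m_0}{M}}R_{4Nk}\bold{W}_{j_0}\}_{k\in\mathbb{Z}}$ is a subcollection of $\mathcal{G}(\Lambda,T_1,T_2,\bold{W}_j)$, so retaining only these terms in the Bessel bound $B_0$ and using Plancherel gives $\sum_{k}|\langle\bold{Z},E_{\frac{m_0}{M}}R_{4Nk}\bold{W}_{j_0}\rangle|^2\le B_0\|\bold{Z}\|_{\ell^2}^2=B_0\|\bold{F}\|_{L^2}^2$. Hence $\|H\|_{L^2}^2\le 4NB_0\|\bold{F}\|_{L^2}^2$ for every $\bold{F}\in L^2(\Omega,\mathbb{C}^S)$ supported on $I$. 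Specialising to $\bold{F}=\chi_E\bold{V}$ for measurable $E\subseteq I$ turns this into $\int_E\|\bold{V}(\xi)\|_{\mathbb{C}^S}^4\,d\xi\le 4NB_0\int_E\|\bold{V}(\xi)\|_{\mathbb{C}^S}^2\,d\xi$; taking $E=\{\xi\in I:\|\bold{V}(\xi)\|_{\mathbb{C}^S}^2>4NB_0\}$ forces $|E|=0$, so $\|\bold{V}(\xi)\|_{\mathbb{C}^S}\le 2\sqrt{NB_0}$ for a.e.\ $\xi\in I$, and therefore for a.e.\ $\xi\in\Omega$, which is the asserted inequality.

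The one genuinely delicate point is identifying a complete orthonormal basis of $L^2(I)$ among the modulated--shift exponentials: one must restrict to the indices $\lambda\in2\mathbb{Z}$ (so $2N\lambda\in4N\mathbb{Z}$), observe that the resulting exponentials have period exactly $\frac{1}{4N}=|I|$, and then read Lemma \ref{naya} on the shifted interval $I$ rather than on $[0,\frac{1}{4N}[$. Everything else — the Fourier bookkeeping, the passage between $L^2(I)$ and $L^2(\Omega)$ (valid since $H$ vanishes off $I$), and the measure--theoretic extraction of the pointwise bound — is routine.
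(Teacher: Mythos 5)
Your argument is essentially the paper's proof read through the adjoint: where the paper tests the \emph{synthesis} bound $\|\mathcal{T}\{a_{\lambda,m,j}\}\|^2\le B_0\sum|a_{\lambda,m,j}|^2$ on coefficient sequences supported on $\lambda\in 2\mathbb{Z}$, $m=m_0$, $j=j_0$, you test the \emph{analysis} bound on signals $\bold{Z}$ whose Fourier transforms live on a single subinterval $I$ of length $\tfrac{1}{4N}$. Both arguments pivot on exactly the same facts: the restriction to $\lambda\in2\mathbb{Z}$ so that the phases become $e^{2\pi i(4N)k(\xi+\frac{m_0}{M})}$, the completeness statement of Lemma \ref{naya} on an interval of length $\tfrac{1}{4N}$, and the folding of $\Omega$ into $4N$ such intervals. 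So this is the same proof in dual clothing, and all the main computations you record are correct.

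One step needs a patch. When you specialise to $\bold{F}=\chi_E\bold{V}$, the function $H=\chi_E\,\|\bold{V}\|_{\mathbb{C}^S}^2$ is only known to lie in $L^1(I)$ a priori (since $\bold{V}\in L^2$ does not give $\|\bold{V}\|^2\in L^2$), yet your identity
\begin{align*}
\sum_{k\in\mathbb{Z}}\big|\langle H,\,e^{2\pi i(4N)k(\cdot+\frac{m_0}{M})}\rangle_{L^2(I)}\big|^2=\tfrac{1}{4N}\|H\|_{L^2}^2
\end{align*}
is Parseval for an element of $L^2(I)$. Either argue that square-summability of the coefficients of an $L^1$ function against this complete system already forces $H\in L^2$ (Riesz--Fischer plus uniqueness), or, more simply, run the final step with the truncated sets $E_n=\{\xi\in I:\ 4NB_0<\|\bold{V}(\xi)\|_{\mathbb{C}^S}^2\le n\}$, for which $H$ is bounded and the argument gives $|E_n|=0$ for every $n$, hence $|E|=0$. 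With that adjustment the proof is complete and yields the stated bound $2\sqrt{NB_0}$.
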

\begin{proof}
	Let $m_0\in T_1$, $j_0\in T_2$ be  arbitrarily fixed elements. Let $f(\xi)$ be an any element of $L^2(0,\frac{1}{4N})$. Then,  by Lemma \ref{naya},  $\{\sqrt{4N}e^{2\pi i(4N)l(\xi+\frac{m_0}{M})}\}_{l\in \mathbb{Z}}$ is a complete orthonormal system of $L^2(0,\frac{1}{4N})$, so there exists sequence $c_l$ such that
	\begin{align}\label{aa123}
	f(\xi)=\sum\limits_{l\in \mathbb{Z}}c_l\sqrt{4N}e^{2\pi i(4N)l(\xi+\frac{m_0}{M})},
	\end{align}
	satisfying $\int\limits_{0}^{\frac{1}{4N}}|f(\xi)|^2\, d\xi=\sum\limits_{l\in\mathbb{Z}}|c_l|^2<\infty$.

	Define sequence $\{a_{\lambda,m,j}\}_{{\substack{ m\in T_1,j\in T_2\\  \lambda\in \Lambda}}}$ as follows:
	
	\begin{align}\label{c}
	a_{\lambda,m,j}=
	\begin{cases}
	c_{l}, & \text{if} \,\, \lambda=2l, \ l \in \mathbb{Z}, m=m_0,j=j_0;\,\\ 	
	0,& \text{otherwise}.
	\end{cases}
	\end{align}
	Obviously $\{a_{\lambda,m,j}\}_{{\substack{ m\in T_1,j\in T_2\\  \lambda\in \Lambda}}} \in \ell^2(\Lambda,T_1,T_2,\mathbb{C})$.
Since,  $\mathcal{G}(\Lambda,  T_1, T_2, \bold{W}_j)$  is a Bessel sequence with Bessel bound $B_0$, so by Theorem \ref{nh1},  the associated pre-frame operator $\mathcal{T}:\ell^2(\Lambda,T_1,T_2,\mathbb{C})\to \ell^2(\Lambda,\mathbb{C}^S)   $ is bounded and satisfies
\begin{align}\label{d}
 ||T\{a_{\lambda,m,j}\}||_{\ell^2}^2\le B_0\sum\limits_{\substack{ m\in T_1,j\in T_2\\  \lambda\in \Lambda}}|a_{\lambda,m,j}|^2.
\end{align}	
Using \eqref{c},  we compute
\begin{align}\label{bb}
&||T\{a_{\lambda,m,j}\}||_{\ell^2}^2 \nonumber\\
&=  \Big|\Big|\sum\limits_{l\in\mathbb{Z}}c_l E_{\frac{m_0}{M}}R_{4Nl}\bold{W}_{j_0}\Big|\Big|_{\ell^2}^2\nonumber\\&=\Big|\Big|\mathcal{F}\Big(\sum\limits_{l\in\mathbb{Z}}c_l E_{\frac{m_0}{M}}R_{4Nl}\bold{W}_{j_0}\Big)\Big|\Big|_{L^2}^2\nonumber\\
&=\Big|\Big|\sum\limits_{l\in\mathbb{Z}}c_l e^{4\pi i N(2l)(\frac{m_0}{M}+\xi)}\mathcal{F}(E_{\frac{m_0}{M}}\bold{W}_{j_0})(\xi) \Big|\Big|_{L^2}^2\nonumber\\
&=\int\limits_{\Omega}\sum\limits_{k=1}^S\Big|\sum\limits_{l\in\mathbb{Z}}c_{l}e^{4\pi i N(2l)(\frac{m_0}{M}+\xi)}\Big[\mathcal{F}(E_{\frac{m_0}{M}}\bold{W}_{j_0})(\xi)\Big]_k\Big|^2\,d\xi\nonumber\\
&=\sum\limits_{k=1}^S\int\limits_{0}^{\frac{1}{2}}\Big|\sum\limits_{l\in \mathbb{Z}}c_{l}e^{4\pi i N(2l)(\frac{m_0}{M}+\xi)}\Big|^2\Bigg(\Big|\Big[\mathcal{F}(E_{\frac{m_0}{M}}\bold{W}_{j_0})(\xi)\Big]_k\Big|^2
+\Big|\Big[\mathcal{F}(E_{\frac{m_0}{M}}\bold{W}_{j_0})\big(\xi+\frac{N}{2}\big)\Big]_k\Big|^2\Bigg)\,d\xi\nonumber\\
&=\sum\limits_{k=1}^S\sum\limits_{t=0}^{2N-1}\int\limits_{0}^{\frac{1}{4N}}\Big|\sum\limits_{l\in \mathbb{Z}}c_{l}e^{4\pi i N(2l)(\frac{m_0}{M}+\xi)}\Big|^2\Bigg(\Big|\Big[\mathcal{F}(E_{\frac{m_0}{M}}\bold{W}_{j_0})(\xi+\frac{t}{4N})\Big]_k\Big|^2\nonumber\\
& \quad +\Big|\Big[\mathcal{F}(E_{\frac{m_0}{M}}\bold{W}_{j_0})\Big(\xi+\frac{N}{2}+\frac{t}{4N}\Big)\Big]_k\Big|^2\Bigg)\,d\xi.
\end{align}
		Using \eqref{aa123}, \eqref{c}, \eqref{d} and \eqref{bb}, we have
\begin{align*}
&\sum\limits_{k=1}^S\sum\limits_{t=0}^{2N-1}\int\limits_{0}^{\frac{1}{4N}}|f(\xi)|^2\Bigg(\Big|\Big[\mathcal{F}(E_{\frac{m_0}{M}}\bold{W}_{j_0})(\xi+\frac{t}{4N})\Big]_k\Big|^2
+\Big|\Big[\mathcal{F}(E_{\frac{m_0}{M}}\bold{W}_{j_0})\Big(\xi+\frac{N}{2}+\frac{t}{4N}\Big)\Big]_k\Big|^2\Bigg)\,d\xi\\
&\le 4NB_0 \int\limits_{0}^{\frac{1}{4N}}|f(\xi)|^2\,d\xi.
\end{align*}
	As $f(\xi)$ is an arbitrary element of $L^2(0,\frac{1}{4N})$, we conclude that for $a.e.\, \xi\in [0,\frac{1}{4N}[$,
	\begin{align*}
	\sum\limits_{k=1}^S\sum\limits_{t=0}^{2N-1}\Big|\Big[\mathcal{F}(E_{\frac{m_0}{M}}\bold{W}_{j_0})(\xi+\frac{t}{4N})\Big]_k\Big|^2
	&+\Big|\Big[\mathcal{F}(E_{\frac{m_0}{M}}\bold{W}_{j_0})\Big(\xi+\frac{N}{2}+\frac{t}{4N}\Big)\Big]_k\Big|^2 \le 4NB_0,
	\end{align*}
which entails
	\begin{align*}
	\sum\limits_{k=1}^S \Big|\Big[\mathcal{F}(E_{\frac{m}{M}}\bold{W}_{j})(\xi)\Big]_k\Big|^2 \le 4NB_0 \,\,\,\,\text{for}\,\,a.e.\,\xi\in \Omega,m\in T_1,j\in T_2.
	\end{align*}
This concludes the proof.	
\end{proof}
By Theorem \ref{th1} and Theorem \ref{th2}, we obtain the following characterization of DVNUG Bessel sequences.
\begin{cor}\label{456}
Let $\{\bold{W}_j\}_{j\in T_2} \subset \ell^2(\Lambda,\mathbb{C}^S)$. Then,  $\mathcal{G}(\Lambda,  T_1, T_2, \bold{W}_j)$  is a  DVNUG Bessel sequence if and only if there exists a constant $C_0$ such that for a.e. $\xi\in \Omega,  m\in T_1$, $j\in T_2$, we have
	\begin{align}\label{Bessel}
	\Big|\Big|\mathcal{F}(E_{\frac{m}{M}}\bold{W}_j)(\xi)\Big|\Big|_{\mathbb{C}^S}\le C_0<\infty.
	\end{align}
\end{cor}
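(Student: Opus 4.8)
The plan is to read off both implications directly from Theorem \ref{th1} and Theorem \ref{th2}, so that Corollary \ref{456} is purely a matter of pairing the two one-sided statements and tracking constants. For the necessity direction, suppose $\mathcal{G}(\Lambda, T_1, T_2, \bold{W}_j)$ is a DVNUG Bessel sequence; by definition it then has some finite Bessel bound, say $B_0$. Theorem \ref{th2} applies verbatim and gives, for every $m\in T_1$, $j\in T_2$ and a.e.\ $\xi\in\Omega$,
\begin{align*}
\Big|\Big|\mathcal{F}(E_{\frac{m}{M}}\bold{W}_j)(\xi)\Big|\Big|_{\mathbb{C}^S}\le 2\sqrt{NB_0}.
\end{align*}
Hence \eqref{Bessel} holds with the explicit choice $C_0=2\sqrt{NB_0}<\infty$.

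For the sufficiency direction, assume there is a constant $C_0<\infty$ with
\begin{align*}
\Big|\Big|\mathcal{F}(E_{\frac{m}{M}}\bold{W}_j)(\xi)\Big|\Big|_{\mathbb{C}^S}\le C_0
\end{align*}
for a.e.\ $\xi\in\Omega$, $m\in T_1$, $j\in T_2$. This is exactly the hypothesis of Theorem \ref{th1} with $B_0$ replaced by $C_0$, and since $T_1$ and $T_2$ are finite index sets, that theorem yields that $\mathcal{G}(\Lambda, T_1, T_2, \bold{W}_j)$ is a DVNUG Bessel sequence with Bessel bound $2^{(M+P)}C_0^2 S$. Combining the two directions completes the proof.

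I do not anticipate a genuine obstacle here: the content has already been done in Theorems \ref{th1} and \ref{th2}, and the only points requiring a word of care are (i) observing that a DVNUG Bessel sequence indeed comes equipped with a finite bound to feed into Theorem \ref{th2}, and (ii) noting that the constant produced in the forward and backward passes need not agree, which is why the statement is phrased with an unspecified constant $C_0$ rather than a fixed numerical bound. One could optionally record the quantitative relations $C_0=2\sqrt{NB_0}$ and (upper) Bessel bound $=2^{(M+P)}C_0^2S$ in a remark, but they are not needed for the equivalence itself.
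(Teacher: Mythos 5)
Your proposal is correct and is exactly the argument the paper intends: the corollary is stated as an immediate consequence of Theorem \ref{th1} (sufficiency) and Theorem \ref{th2} (necessity), with the unspecified constant $C_0$ absorbing the mismatch between the two explicit bounds. Nothing further is needed.
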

\begin{rem}
Condition \eqref{Bessel} does not guarantee that $\mathcal{G}(\Lambda,  T_1, T_2, \bold{W}_j)$  constitutes a  DVNUG frame for $\ell^2(\Lambda,\mathbb{C}^S)$. Indeed, if we choose $M=1,P=1,S=2$ and $N$, $r$ as  standard, then $T_1=\{0\},T_2=\{0,1\}$,  $\Lambda=\{0,\frac{r}{N}\}+2\mathbb{Z}$  and $\Omega=[0,\frac{1}{2}[\cup[\frac{N}{2},\frac{N+1}{2}[$. Define $\{\bold{W}_j \}_{j\in T_2}\in \ell^2(\Lambda,\mathbb{C}^2)$ in term of its Fourier transforms as follows:
	\begin{align*}
	\mathcal{F}(\bold{W}_0)=\begin{bmatrix}
	\sqrt{2N}\chi_{[0,\frac{1}{4N}[}(\xi)
	\vspace{5pt}\\
	0
	\end{bmatrix} \  \quad \text{and} \ \quad
	\mathcal{F}(\bold{W}_1)=\begin{bmatrix}
	0
	\vspace{5pt}\\
	\sqrt{2N}\chi_{[0,\frac{1}{4N}[ }(\xi)
	\end{bmatrix}.	
	\end{align*}
	As $T_1=\{0\}$, so it can be  easily checked, for a.e. $\xi\in \Omega$,  $m\in T_1$, $j\in T_2$ that
	\begin{align*}
	\Big|\Big|\mathcal{F}(E_{\frac{m}{M}}\bold{W}_j)(\xi)\Big|\Big|_{\mathbb{C}^2}\le \sqrt{2N} <\infty.
	\end{align*}
	By Corollary \ref{456}, $\mathcal{G}(\Lambda,  T_1, T_2, \bold{W}_j)=\{E_{\frac{m}{M}}R_{2N\lambda}\bold{W}_j:m\in T_1,j\in T_2,\lambda\in \Lambda \}$
	is DVNUG Bessel sequence for $\ell^2(\Lambda,\mathbb{C}^2)$. If possible,  assume that  $\mathcal{G}(\Lambda,  T_1, T_2, \bold{W}_j)$  is a   DVNUG frame for $\ell^2(\Lambda,\mathbb{C}^2)$, then there exists a positive constant $A_0$ (say),  such that for any $\bold{Z}\in \ell^2(\Lambda,\mathbb{C}^2)$, we have
	\begin{align}\label{3.9}
	A_0 ||\mathcal{F}(\bold{Z})||^2=A_0 ||\bold{Z}||^2 &\le \sum\limits_{\substack{ m\in T_1,j\in T_2\\  \lambda\in \Lambda}}\Big|\langle \bold{Z},E_{\frac{m}{M}}R_{2N\lambda}\bold{W}_j\rangle\Big|^2.
	\end{align}
We compute
\begin{align}\label{3.10}
&\sum\limits_{\substack{ m\in T_1,j\in T_2\\  \lambda\in \Lambda}}\Big|\langle \bold{Z},E_{\frac{m}{M}}R_{2N\lambda}\bold{W}_j\rangle\Big|^2 \nonumber\\
&=\sum\limits_{\substack{ j\in T_2,  \lambda\in \Lambda}}\Big|\langle \mathcal{F}(\bold{Z}),\mathcal{F}(R_{2N\lambda}\bold{W}_j)\rangle\Big|^2\nonumber\\
&=\sum\limits_{\substack{ j\in T_2,  \lambda\in \Lambda}}\Big|\langle \mathcal{F}(\bold{Z})(\xi), e^{4\pi i N\lambda\xi} \mathcal{F}(\bold{W}_j)(\xi)\rangle\Big|^2\nonumber\\
&=\sum\limits_{\lambda\in \Lambda}\Big|\int_{\Omega}[\mathcal{F}(\bold{Z})(\xi)]_1e^{-4\pi i N\lambda\xi}\sqrt{2N}\chi_{[0,\frac{1}{4N}[}(\xi)\,d\xi \Big|^2
+\sum\limits_{\lambda\in \Lambda}\Big|\int_{\Omega}[\mathcal{F}(\bold{Z})(\xi)]_2e^{-4\pi i N\lambda\xi}\sqrt{2N}\chi_{[0,\frac{1}{4N}[}(\xi)\,d\xi \Big|^2\nonumber\\
&=2N\Bigg(\sum\limits_{l\in \mathbb{Z}}\Big|\int_{0}^{\frac{1}{4N}}[\mathcal{F}(\bold{Z})(\xi)]_1e^{-2\pi i (4N)l\xi}\,d\xi \Big|^2
+\sum\limits_{l\in \mathbb{Z}}\Big|\int_{0}^{\frac{1}{4N}}[\mathcal{F}(\bold{Z})(\xi)]_1e^{-4\pi ir \xi} e^{-2\pi i (4N)l\xi}\,d\xi \Big|^2\nonumber\\
& \quad +\sum\limits_{l\in \mathbb{Z}}\Big|\int_{0}^{\frac{1}{4N}}[\mathcal{F}(\bold{Z})(\xi)]_2e^{-2\pi i (4N)l\xi}\,d\xi \Big|^2
+\sum\limits_{l\in \mathbb{Z}}\Big|\int_{0}^{\frac{1}{4N}}[\mathcal{F}(\bold{Z})(\xi)]_2e^{-4\pi ir \xi} e^{-2\pi i (4N)l\xi}\,d\xi \Big|^2
	\Bigg).
	\end{align}
Using Lemma \ref{naya} and  \eqref{3.10}, we have
\begin{align}\label{3.11}
&\sum\limits_{\substack{ m\in T_1,j\in T_2\\  \lambda\in \Lambda}}\Big|\langle \bold{Z},E_{\frac{m}{M}}R_{2N\lambda}\bold{W}_j\rangle\Big|^2 \nonumber\\
&= \frac{1}{2}\Bigg(\int\limits_{0}^{\frac{1}{4N}}\big|[\mathcal{F}(\bold{Z})(\xi)]_1\big|^2d\xi+\int\limits_{0}^{\frac{1}{4N}}\big|[\mathcal{F}(\bold{Z})(\xi)]_1\big|^2d\xi
+\int\limits_{0}^{\frac{1}{4N}}\big|[\mathcal{F}(\bold{Z})(\xi)]_2\big|^2d\xi+\int\limits_{0}^{\frac{1}{4N}}\big|[\mathcal{F}(\bold{Z})(\xi)]_2\big|^2d\xi\Bigg) \nonumber\\
&=\int\limits_{0}^{\frac{1}{4N}}\big|[\mathcal{F}(\bold{Z})(\xi)]_1\big|^2d\xi+\int\limits_{0}^{\frac{1}{4N}}\big|[\mathcal{F}(\bold{Z})(\xi)]_2\big|^2d\xi.
\end{align}
 Now, if we choose $\bold{Z}_0\in \ell^2(\Lambda,\mathbb{C}^2)$ in term of of its Fourier transformation as follows:
\begin{align}\label{godimedia}
\mathcal{F}(\bold{Z}_0)(\xi)=\begin{bmatrix}
\chi_{[0,\frac{1}{4N}[}+\frac{1}{A_0}\chi_{[\frac{1}{4N},\frac{2}{4N}[}
\vspace{5pt}\\
\chi_{[0,\frac{1}{4N}[}+\frac{1}{A_0}\chi_{[\frac{1}{4N},\frac{2}{4N}[}
\end{bmatrix},
\end{align}
then by \eqref{godimedia}, \eqref{3.9} and \eqref{3.11},  we arrive at a  contradiction.
\end{rem}
The following lemma plays a crucial role in  characterization of DVNUG frames for discrete vector-valued nonuniform signal spaces.

\begin{lem}\label{Lemma1}
	Assume $\{\bold{W}_j \}_{j\in T_2}\subset \ell^2(\Lambda,\mathbb{C}^S)$. For $m\in T_1$, $j\in T_2$, $k\in \{1,2,\dots,S\}$ and $\xi\in [0,\frac{1}{4N}[$,  define matrices
\begin{align*}
{\mathcal{A}}_{j,m,k}(\xi), \quad {\mathcal{B}}_{j,m,k}(\xi), \quad \text{and} \quad \ {\mathcal{M}}_{m,k}(\xi),
\end{align*}
respectively, as follows.
		\begin{align*}
	{\mathcal{A}}_{j,m,k}(\xi)=\tiny{\begin{bmatrix}
	\Big[\mathcal{F}(E_{\frac{m}{M}}\bold{W}_j)(\xi)\Big]_k
	\vspace{5pt}\\
	\Big[\mathcal{F}(E_{\frac{m}{M}}\bold{W}_j)(\xi+\frac{1}{4N})\Big]_k\\
	\vdots
	\vspace{5pt}\\
\Big[\mathcal{F}(E_{\frac{m}{M}}\bold{W}_j)(\xi+\frac{2N-1}{4N})\Big]_k
\vspace{5pt}\\
		\Big[\mathcal{F}(E_{\frac{m}{M}}\bold{W}_j)(\xi+\frac{N}{2})\Big]_k
		\vspace{5pt}\\
				\Big[\mathcal{F}(E_{\frac{m}{M}}\bold{W}_j)(\xi+\frac{N}{2}+\frac{1}{4N})\Big]_k\\
		\vdots
		\vspace{5pt}\\
		\Big[\mathcal{F}(E_{\frac{m}{M}}\bold{W}_j)(\xi+\frac{N}{2}+\frac{2N-1}{4N})\Big]_k
	\end{bmatrix}_{4N\times 1}},\,\,
\mathcal{B}_{j,m,k}(\xi)=\tiny{\begin{bmatrix}
	e^{4\pi ir\xi}\Big[\mathcal{F}(E_{\frac{m}{M}}\bold{W}_j)(\xi)\Big]_k
	\vspace{5pt}\\
	e^{4\pi ir(\xi+\frac{1}{4N})}\Big[\mathcal{F}(E_{\frac{m}{M}}\bold{W}_j)(\xi+\frac{1}{4N})\Big]_k\\
	\vdots
	\vspace{5pt}\\	
	e^{4\pi ir(\xi+\frac{2N-1}{4N})}\Big[\mathcal{F}(E_{\frac{m}{M}}\bold{W}_j)(\xi+\frac{2N-1}{4N})\Big]_k
	\vspace{5pt}\\
	e^{4\pi ir\xi}\Big[\mathcal{F}(E_{\frac{m}{M}}\bold{W}_j)(\xi+\frac{N}{2})\Big]_k
	\vspace{5pt}\\
	e^{4\pi ir(\xi+\frac{1}{4N})}\Big[\mathcal{F}(E_{\frac{m}{M}}\bold{W}_j)(\xi+\frac{N}{2}+\frac{1}{4N})\Big]_k\\
	\vdots
	\vspace{5pt}\\
	e^{4\pi ir(\xi+\frac{2N-1}{4N})}\Big[\mathcal{F}(E_{\frac{m}{M}}\bold{W}_j)(\xi+\frac{N}{2}+\frac{2N-1}{4N})\Big]_k
	\end{bmatrix}_{4N\times 1}}
	\end{align*}
	and,
	\begin{align}\label{M}
	\mathcal{M}_{m,k}(\xi)=	\begin{bmatrix}
	\mathcal{A}_{0,m,k}(\xi) &\mathcal{B}_{0,m,k}(\xi) &\mathcal{A}_{1,m,k}(\xi) &\mathcal{B}_{1,m,k}(\xi)\cdots &\mathcal{A}_{P,m,k}(\xi) &\mathcal{B}_{P,m,k}(\xi)
	\end{bmatrix}_{4N\times2(P+1)}.
	\end{align}
For $m\in T_1$, $j\in T_2$ and a.e. $\xi\in \Omega$, assume that
\begin{align}\label{33}
	  \Big|\Big|\mathcal{F}(E_{\frac{m}{M}}\bold{W}_j)(\xi)\Big|\Big|_{\mathbb{C}^S}\le B_0<\infty,
\end{align}
for some constant $B_0$. Then,  for any $\bold{Z}\in \ell^2(\Lambda,\mathbb{C}^S)$, we have
	  \begin{align*}
	  4N\sum\limits_{\substack{ m\in T_1,j\in T_2\\  \lambda\in \Lambda}}\Big|\langle \bold{Z},E_{\frac{m}{M}}R_{2N\lambda}\bold{W}_j\rangle\Big|^2=\sum\limits_{m\in T_1}\int\limits_{0}^{\frac{1}{4N}}
	  \Big|\Big|\sum\limits_{k=1}^{S}\mathcal{M}_{m,k}^*(\xi)\mathcal{V}_{\bold{Z}_k}(\xi)   \Big|\Big|_{\mathbb{C}^{2(P+1)}}^2\,d\xi,
	  \end{align*}
	 where  $\mathcal{M}_{m,k}^*(\xi)$ is conjugate transpose of matrix $\mathcal{M}_{m,k}(\xi)$ and
	 \begin{align}\label{hari}
	 \mathcal{V}_{\bold{Z}_k}(\xi)=\begin{bmatrix}
	 \big[\mathcal{F}(\bold{Z})(\xi)   \big]_k
	 \vspace{5pt}\\
	 \big[\mathcal{F}(\bold{Z})(\xi+\frac{1}{4N})   \big]_k\\
	 \vdots
	  \vspace{5pt}\\
	 \big[\mathcal{F}(\bold{Z})(\xi+\frac{2N-1}{4N})   \big]_k
	 \vspace{5pt}\\
	 \big[\mathcal{F}(\bold{Z})(\xi+\frac{N}{2})   \big]_k
	 \vspace{5pt}\\
	 \big[\mathcal{F}(\bold{Z})(\xi+\frac{N}{2}+\frac{1}{4N})   \big]_k\\
	 \vdots
	 \vspace{5pt}\\
	 \big[\mathcal{F}(\bold{Z})(\xi+\frac{N}{2}+\frac{2N-1}{4N})   \big]_k\\
	 \end{bmatrix}_{4N\times 1}   ,\,\,\,\text{for}\,\, k\in \{1,2,\dots,S\}.
	 \end{align}
	
\end{lem}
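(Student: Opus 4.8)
The plan is the matrix‑valued (Zak–transform‑type) computation that is standard for Gabor frames, adapted here to the nonuniform spectrum $\Lambda$: push everything onto the Fourier side, split $\Lambda$ into its two cosets $2\mathbb{Z}$ and $\frac rN+2\mathbb{Z}$, periodize the integral over $\Omega$ onto $[0,\frac1{4N}[$, and then collapse the remaining $\lambda$‑dependence by Parseval in the orthonormal system of Lemma~\ref{naya}.

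First I would use Parseval's relation together with the identity $\mathcal{F}(E_{\frac mM}R_{2N\lambda}\bold{W}_j)(\xi)=e^{4\pi iN\lambda(\frac mM+\xi)}\mathcal{F}(E_{\frac mM}\bold{W}_j)(\xi)$ to write, for $\lambda\in\Lambda$, $m\in T_1$, $j\in T_2$,
\[
\langle\bold{Z},E_{\frac mM}R_{2N\lambda}\bold{W}_j\rangle=\sum_{k=1}^S\int_\Omega[\mathcal{F}(\bold{Z})(\xi)]_k\,\overline{[\mathcal{F}(E_{\frac mM}\bold{W}_j)(\xi)]_k}\;e^{-4\pi iN\lambda(\frac mM+\xi)}\,d\xi.
\]
Writing $\lambda=2l$ in the first coset and $\lambda=\frac rN+2l$ in the second, the exponential becomes $e^{-2\pi i(4N)l(\frac mM+\xi)}$, respectively $e^{-4\pi ir(\frac mM+\xi)}\,e^{-2\pi i(4N)l(\frac mM+\xi)}$ (using $N\cdot\frac rN=r$); the extra factor $e^{-4\pi ir(\cdot)}$ is precisely what will convert the block $\mathcal{A}_{j,m,k}$ into $\mathcal{B}_{j,m,k}$.

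Next I would periodize. The set $\Omega=[0,\tfrac12[\cup[\tfrac N2,\tfrac{N+1}2[$ is the disjoint union of the $4N$ translates of $[0,\tfrac1{4N}[$ by the offsets $\tfrac t{4N}$ and $\tfrac N2+\tfrac t{4N}$, $t=0,\dots,2N-1$. Splitting the integral over $\Omega$ accordingly and substituting $\xi\mapsto\xi+(\mathrm{offset})$, the facts $rN\in\mathbb{Z}$ and $4N\cdot\tfrac N2\in\mathbb{Z}$ give $e^{-2\pi i(4N)l(\mathrm{offset})}=1$ and $e^{-4\pi ir(\frac N2+\frac t{4N})}=e^{-4\pi ir\frac t{4N}}$, so the $l$‑dependence reduces to the single exponential $e^{-2\pi i(4N)l(\frac mM+\xi)}$ on $[0,\tfrac1{4N}[$. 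The $4N$ translated values $[\mathcal{F}(\bold{Z})(\xi+\mathrm{offset})]_k$ are exactly the entries of $\mathcal{V}_{\bold{Z}_k}(\xi)$ in \eqref{hari}, and the matching (and, in the second coset, $e^{4\pi ir(\cdot)}$‑twisted) values of $\mathcal{F}(E_{\frac mM}\bold{W}_j)$ are exactly the entries of $\mathcal{A}_{j,m,k}(\xi)$ and $\mathcal{B}_{j,m,k}(\xi)$. Hence one obtains $\langle\bold{Z},E_{\frac mM}R_{2N(2l)}\bold{W}_j\rangle=\int_0^{1/4N}\bigl(\sum_k\mathcal{A}_{j,m,k}(\xi)^*\mathcal{V}_{\bold{Z}_k}(\xi)\bigr)\,e^{-2\pi i(4N)l(\frac mM+\xi)}\,d\xi$, and, up to a unimodular constant (which drops when $|\cdot|^2$ is taken), the same with $\mathcal{B}_{j,m,k}$ in place of $\mathcal{A}_{j,m,k}$ when $\lambda=\frac rN+2l$. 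Hypothesis \eqref{33} enters here: together with $\mathcal{F}(\bold{Z})\in L^2(\Omega,\mathbb{C}^S)$ it guarantees that the scalar functions $\xi\mapsto\sum_k\mathcal{A}_{j,m,k}(\xi)^*\mathcal{V}_{\bold{Z}_k}(\xi)$ and $\xi\mapsto\sum_k\mathcal{B}_{j,m,k}(\xi)^*\mathcal{V}_{\bold{Z}_k}(\xi)$ lie in $L^2(0,\tfrac1{4N})$.

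Finally, for each fixed $m,j$ I would sum the squared moduli over $l\in\mathbb{Z}$ and apply Lemma~\ref{naya} with $b=4N$, $c_0=\tfrac mM$: since $\{\sqrt{4N}\,e^{2\pi i(4N)l(\xi+\frac mM)}\}_{l\in\mathbb{Z}}$ is a complete orthonormal system for $L^2(0,\tfrac1{4N})$, Parseval turns $\sum_l|\cdot|^2$ into $\tfrac1{4N}\int_0^{1/4N}|\cdot|^2\,d\xi$. Adding the two coset contributions and summing over $j\in T_2$ reassembles the integrand into $\sum_{j\in T_2}\bigl(|\sum_k\mathcal{A}_{j,m,k}^*\mathcal{V}_{\bold{Z}_k}|^2+|\sum_k\mathcal{B}_{j,m,k}^*\mathcal{V}_{\bold{Z}_k}|^2\bigr)=\bigl\|\sum_{k=1}^S\mathcal{M}_{m,k}^*(\xi)\mathcal{V}_{\bold{Z}_k}(\xi)\bigr\|_{\mathbb{C}^{2(P+1)}}^2$, the last equality being just the column ordering $\mathcal{A}_0,\mathcal{B}_0,\dots,\mathcal{A}_P,\mathcal{B}_P$ fixed in \eqref{M}; summing over $m\in T_1$ and multiplying by $4N$ gives the claim. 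I do not expect a conceptual difficulty here: the real work is the bookkeeping in the periodization step — checking that each offset‑phase either trivializes or is exactly the twist defining $\mathcal{B}$, and that the block structure of $\mathcal{M}_{m,k}$ is matched to the pairing (coset of $\lambda$, index $j$) — while the only analytic input, \eqref{33}, is harmless since by Corollary~\ref{456} it holds automatically for a DVNUG Bessel system.
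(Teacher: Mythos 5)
Your proposal is correct and follows essentially the same route as the paper's proof: Parseval plus the modulation/shift identity, decomposition of $\Lambda$ into the cosets $2\mathbb{Z}$ and $\frac{r}{N}+2\mathbb{Z}$ (the second producing the $e^{-4\pi ir(\cdot)}$ twist that distinguishes $\mathcal{B}$ from $\mathcal{A}$), periodization of $\Omega$ onto $[0,\frac{1}{4N}[$, and Lemma \ref{naya} to convert $\sum_{l}|\cdot|^2$ into $\frac{1}{4N}\int|\cdot|^2$. The only cosmetic difference is that the paper periodizes in two stages via an auxiliary function $X_{m,j}$ on $[0,\frac12[$, while you collapse $\Omega$ onto $[0,\frac{1}{4N}[$ in one step; the bookkeeping and the use of hypothesis \eqref{33} are the same.
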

\begin{proof}
	Let $\bold{Z}\in \ell^2(\Lambda,\mathbb{C}^S)$ be  arbitrary. For $m\in T_1$, $j\in T_2$ and $\xi \in [0,\frac{1}{2}[$, define
	\begin{align}\label{X}
	X_{m,j}(\xi)=\sum\limits_{k=1}^S\Bigg (\big[\mathcal{F}(\bold{Z}(\xi)) \big]_k\overline{\big[\mathcal{F}(E_{\frac{m}{M}}\bold{W}_j)(\xi) \big]_k}+\big[\mathcal{F}(\bold{Z}(\xi+\frac{N}{2})) \big]_k\overline{\big[\mathcal{F}(E_{\frac{m}{M}}\bold{W}_j)(\xi+\frac{N}{2}) \big]_k}\Bigg).
	\end{align}
Then,  for $\xi \in [0,\frac{1}{4N}[$, $m\in T_1$,  it is easy to see  that
	 \begin{align*}
	 \sum\limits_{k=1}^S \mathcal{M}_{m,k}^*(\xi)\mathcal{V}_{\bold{Z}_k}(\xi)=\begin{bmatrix}
	 \sum\limits_{t=0}^{2N-1}X_{m,0}(\xi+\frac{t}{4N})
	 \vspace{5pt}\\
	 \sum\limits_{t=0}^{2N-1} e^{-4\pi i r(\xi+\frac{t}{4N})} X_{m,0}(\xi+\frac{t}{4N})
	 \vspace{5pt}\\
	 \sum\limits_{t=0}^{2N-1}X_{m,1}(\xi+\frac{t}{4N})
	 \vspace{5pt}\\
	 \sum\limits_{t=0}^{2N-1} e^{-4\pi i r(\xi+\frac{t}{4N})} X_{m,1}(\xi+\frac{t}{4N})\\
	 \vdots
	 \vspace{5pt}\\
	 \sum\limits_{t=0}^{2N-1}X_{m,P}(\xi+\frac{t}{4N})
	 \vspace{5pt}\\
	 \sum\limits_{t=0}^{2N-1} e^{-4\pi i r(\xi+\frac{t}{4N})} X_{m,P}(\xi+\frac{t}{4N})
	 \end{bmatrix}_{2(P+1)\times 1}.
	 	 \end{align*}
	 Also,
 \begin{align}\label{22}
&\sum\limits_{m\in T_1}\int\limits_{0}^{\frac{1}{4N}}\Big|\Big|\sum\limits_{k=1}^S \mathcal{M}_{m,k}^*(\xi)\mathcal{V}_{\bold{Z}_k}(\xi)\Big|\Big|_{\mathbb{C}^{2(P+1)}}^2\,d\xi\\
&= \sum\limits_{m\in T_1}\sum\limits_{j\in T_2}\int\limits_{0}^{\frac{1}{4N}}\Bigg(\Big|\sum\limits_{t=0}^{2N-1}X_{m,j}(\xi+\frac{t}{4N})\Big|^2\nonumber
+\Big|\sum\limits_{t=0}^{2N-1}e^{-\pi i r\frac{t}{N}} X_{m,j}(\xi+\frac{t}{4N})\Big|^2\Bigg)\,d\xi.
\end{align}	
	Now consider,
\begin{align}\label{Y}
&\sum\limits_{\substack{ m\in T_1,j\in T_2\\  \lambda\in \Lambda}}\Big|\langle \bold{Z},E_{\frac{m}{M}}R_{2N\lambda}\bold{W}_j\rangle\Big|^2 \nonumber\\
&=\sum\limits_{\substack{ m\in T_1,j\in T_2\\  \lambda\in \Lambda}}\Big|\langle \mathcal{F}(\bold{Z})(\xi),e^{4\pi i N\lambda(\frac{m}{M}+\xi)} \mathcal{F}(E_{\frac{m}{M}}\bold{W}_j)(\xi)\rangle\Big|^2\nonumber\\
	&=\sum\limits_{\substack{ m\in T_1,j\in T_2\\  \lambda\in \Lambda}}\Big|\int\limits_{\Omega}\sum\limits_{k=1}^S[\mathcal{F}(\bold{Z})(\xi)]_k\overline{[ \mathcal{F}(E_{\frac{m}{M}}\bold{W}_j)(\xi) ]_k}e^{-4\pi i N\lambda(\frac{m}{M}+\xi)}d\xi\Big|^2\nonumber\\
	&=\sum\limits_{\substack{ m\in T_1,j\in T_2\\  \l\in \mathbb{Z}}}\Big|\int\limits_{\Omega}\sum\limits_{k=1}^S[\mathcal{F}(\bold{Z})(\xi)]_k\overline{[ \mathcal{F}(E_{\frac{m}{M}}\bold{W}_j)(\xi) ]_k}e^{-4\pi i N(2l)(\frac{m}{M}+\xi)}d\xi\Big|^2\nonumber\\
	&+\sum\limits_{\substack{ m\in T_1,j\in T_2\\  \l\in \mathbb{Z}}}\Big|\int\limits_{\Omega}\sum\limits_{k=1}^S[\mathcal{F}(\bold{Z})(\xi)]_k\overline{[ \mathcal{F}(E_{\frac{m}{M}}\bold{W}_j)(\xi) ]_k}e^{-4\pi i N(\frac{r}{N}+2l)(\frac{m}{M}+\xi)}d\xi\Big|^2.
		\end{align}
Using  \eqref{X} and \eqref{Y}, we have	
\begin{align}\label{aa}
&\sum\limits_{\substack{ m\in T_1,j\in T_2\\  \lambda\in \Lambda}}\Big|\langle \bold{Z},E_{\frac{m}{M}}R_{2N\lambda}\bold{W}_j\rangle\Big|^2 \nonumber\\
&=\sum\limits_{\substack{ m\in T_1,j\in T_2\\  \l\in \mathbb{Z}}}\Big|\int\limits_{0}^{\frac{1}{2}}X_{m,j}(\xi)e^{-4\pi i N(2l)(\frac{m}{M}+\xi)}d\xi\Big|^2\nonumber
+\sum\limits_{\substack{ m\in T_1,j\in T_2\\  \l\in \mathbb{Z}}}\Big|\int\limits_{0}^{\frac{1}{2}}X_{m,j}(\xi)e^{-4\pi i N(\frac{r}{N}+2l)(\frac{m}{M}+\xi)}\,d\xi\Big|^2\nonumber\\
&=\sum\limits_{\substack{ m\in T_1,j\in T_2\\  \l\in \mathbb{Z}}}\Big|\int\limits_{0}^{\frac{1}{4N}}\sum\limits_{t=0}^{2N-1}X_{m,j}(\xi+\frac{t}{4N})e^{-2\pi i (4N)l(\frac{m}{M}+\xi)}d\xi\Big|^2\nonumber\\
	&+\sum\limits_{\substack{ m\in T_1,j\in T_2\\  \l\in \mathbb{Z}}}\Big|\int\limits_{0}^{\frac{1}{4N}}\sum\limits_{t=0}^{2N-1}X_{m,j}(\xi+\frac{t}{4N}) e^{-4\pi ir(\frac{m}{M}+\xi+\frac{t}{4N})}  e^{-2\pi i (4N)l(\frac{m}{M}+\xi)}d\xi\Big|^2.
		\end{align}
By invoking  Lemma \ref{naya},  Ineq. \eqref{33}  and  \eqref{aa}, we have
\begin{align}\label{11}
&\sum\limits_{\substack{ m\in T_1,j\in T_2\\  \lambda\in \Lambda}}\Big|\langle \bold{Z},E_{\frac{m}{M}}R_{2N\lambda}\bold{W}_j\rangle\Big|^2\nonumber\\
&=\frac{1}{4N}\Bigg(\sum\limits_{\substack{ m\in T_1 \atop j\in T_2}}\int\limits_{0}^{\frac{1}{4N}}\Big|\sum\limits_{t=0}^{2N-1}X_{m,j}(\xi+\frac{t}{4N})\Big|^2\,d\xi
+\sum\limits_{\substack{ m\in T_1 \atop j\in T_2}}\int\limits_{0}^{\frac{1}{4N}}\Big|\sum\limits_{t=0}^{2N-1}X_{m,j}(\xi+\frac{t}{4N})e^{-i \pi \frac{r}{N}t}\Big|^2\,d\xi\Bigg).
	\end{align}
 Equations  \eqref{22} and \eqref{11} give  the desired result.
	\end{proof}

The following result characterizes  DVNUG frames in discrete vector-valued nonuniform  signal spaces.
\begin{thm}\label{main1}
Suppose  $\{\bold{W}_j\}_{j\in T_2} \subset \ell^2(\Lambda,\mathbb{C}^S)$ satisfies
\begin{align*}
\Big|\Big|\mathcal{F}(E_{\frac{m}{M}}\bold{W}_j)(\xi)\Big|\Big|_{\mathbb{C}^S}\le B_0<\infty \  \ \text{for} \ m \in T_1,  \   j \in T_2,  \ \text{a.e.} \ \xi\in \Omega,
\end{align*}
where  $B_0$ is a positive constant. Let $\mathcal{M}_{m,k}(\xi)$ be $4\times 2(P+1)$ matrices for  $m\in T_1,j\in T_2, a.e.\,\xi \in [0,\frac{1}{4N}[ $, as defined in \eqref{M}. Then,
 $\mathcal{G}(\Lambda,  T_1, T_2, \bold{W}_j)$  is a DVNUG frame for discrete vector-valued nonuniform signal  space $\ell^2(\Lambda,\mathbb{C}^S)$ with lower frame bound $A_0>0 $  if and only if
	\begin{align}\label{mama}
	\sum\limits_{m\in T_1}\Big|\Big|\sum\limits_{k=1}^S\mathcal{M}_{m,k}^*(\xi)\bold{C}_k\Big|\Big|_{\mathbb{C}^{2(P+1)}}^2\ge 4N A_0 \sum\limits_{k=1}^S ||\bold{C}_k||_{\mathbb{C}^{4N}}^2,\,\,a.e.\,\,\xi\in \Big(0,\frac{1}{4N}\Big)
	\end{align}
	for all $\bold{C}_k\in \mathbb{C}^{4N}$, $ k=1,2,\dots,S$.
	\end{thm}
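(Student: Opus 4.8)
### Proof Strategy

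The plan is to use Lemma \ref{Lemma1} as the bridge between the frame inequality and a pointwise matrix inequality, and then argue that the latter inequality for arbitrary fixed vectors $\bold{C}_k$ is equivalent to the global inequality obtained by integrating over suitable "fibre" vectors $\mathcal{V}_{\bold{Z}_k}(\xi)$. First I would note that under the standing assumption $\||\mathcal{F}(E_{\frac{m}{M}}\bold{W}_j)(\xi)\||_{\mathbb{C}^S}\le B_0$, Corollary \ref{456} (via Theorem \ref{th1}) already guarantees that $\mathcal{G}(\Lambda,T_1,T_2,\bold{W}_j)$ is a DVNUG Bessel sequence, so the upper frame bound is automatic; only the lower bound is at issue. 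By Lemma \ref{Lemma1}, for every $\bold{Z}\in\ell^2(\Lambda,\mathbb{C}^S)$ we have
\begin{align*}
4N\sum_{\substack{m\in T_1,j\in T_2\\ \lambda\in\Lambda}}\big|\langle\bold{Z},E_{\frac{m}{M}}R_{2N\lambda}\bold{W}_j\rangle\big|^2
=\sum_{m\in T_1}\int_0^{\frac{1}{4N}}\Big\|\sum_{k=1}^S\mathcal{M}_{m,k}^*(\xi)\mathcal{V}_{\bold{Z}_k}(\xi)\Big\|_{\mathbb{C}^{2(P+1)}}^2\,d\xi,
\end{align*}
while, since $(\Lambda,\Omega)$ is a spectral pair and $\mathcal{F}$ is an isometry, $\|\bold{Z}\|_{\ell^2}^2=\|\mathcal{F}(\bold{Z})\|_{L^2}^2=\sum_{k=1}^S\int_0^{\frac{1}{4N}}\|\mathcal{V}_{\bold{Z}_k}(\xi)\|_{\mathbb{C}^{4N}}^2\,d\xi$, because the $4N$ translates $\xi+\frac{t}{4N}$ and $\xi+\frac{N}{2}+\frac{t}{4N}$ ($t=0,\dots,2N-1$) tile $\Omega$.

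For the "if" direction, suppose \eqref{mama} holds for a.e.\ $\xi\in(0,\frac{1}{4N})$ and all $\bold{C}_k\in\mathbb{C}^{4N}$. Then for each fixed $\bold{Z}$ I apply \eqref{mama} pointwise with $\bold{C}_k=\mathcal{V}_{\bold{Z}_k}(\xi)$, integrate over $(0,\frac{1}{4N})$, and combine with the two displayed identities to obtain
\begin{align*}
4N\sum_{\substack{m\in T_1,j\in T_2\\ \lambda\in\Lambda}}\big|\langle\bold{Z},E_{\frac{m}{M}}R_{2N\lambda}\bold{W}_j\rangle\big|^2\ge 4NA_0\sum_{k=1}^S\int_0^{\frac{1}{4N}}\|\mathcal{V}_{\bold{Z}_k}(\xi)\|_{\mathbb{C}^{4N}}^2\,d\xi=4NA_0\|\bold{Z}\|_{\ell^2}^2,
\end{align*}
which after cancelling $4N$ is exactly the lower frame inequality; together with the Bessel bound this shows $\mathcal{G}(\Lambda,T_1,T_2,\bold{W}_j)$ is a DVNUG frame with lower bound $A_0$.

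For the "only if" direction I argue by contraposition: if \eqref{mama} fails, there is a set $\mathcal{E}\subset(0,\frac{1}{4N})$ of positive measure and, for each $\xi\in\mathcal{E}$, vectors $\bold{C}_1(\xi),\dots,\bold{C}_S(\xi)\in\mathbb{C}^{4N}$ with $\sum_{m\in T_1}\|\sum_k\mathcal{M}_{m,k}^*(\xi)\bold{C}_k(\xi)\|^2<4NA_0\sum_k\|\bold{C}_k(\xi)\|^2$. The point to be careful about is measurability: one chooses the $\bold{C}_k(\xi)$ measurably in $\xi$ (the strict inequality is an open condition and the matrix entries depend measurably on $\xi$, so a measurable selection exists — this can also be arranged by a standard exhaustion using countably many constant vectors and shrinking $\mathcal{E}$), and normalizes so that $\sum_k\|\bold{C}_k(\xi)\|^2=1$ on $\mathcal{E}$. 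Setting $\mathcal{V}_{\bold{Z}_k}(\xi)=\bold{C}_k(\xi)\chi_{\mathcal{E}}(\xi)$ and reading off the required values of $\mathcal{F}(\bold{Z})$ on the $4N$ translates defines a genuine $\bold{Z}\in\ell^2(\Lambda,\mathbb{C}^S)$ with $\|\bold{Z}\|_{\ell^2}^2=|\mathcal{E}|>0$; feeding this $\bold{Z}$ back through Lemma \ref{Lemma1} gives $4N\sum|\langle\bold{Z},\cdot\rangle|^2<4NA_0|\mathcal{E}|=4NA_0\|\bold{Z}\|_{\ell^2}^2$, contradicting the lower frame bound $A_0$.

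The main obstacle is the measurable selection of the counterexample vectors $\bold{C}_k(\xi)$ in the "only if" direction — ensuring that the failure of the pointwise inequality, which a priori happens at each bad $\xi$ with a possibly $\xi$-dependent choice of vectors, can be packaged into a single honest element of $\ell^2(\Lambda,\mathbb{C}^S)$. All the rest is bookkeeping: the reindexing that turns the lattice sum over $\Lambda$ into integrals over $(0,\frac{1}{4N})$ is already done inside Lemma \ref{Lemma1}, and the tiling identity for $\|\bold{Z}\|_{\ell^2}^2$ is a direct consequence of Plancherel together with $\Omega=\bigcup_{t=0}^{2N-1}\big([\tfrac{t}{4N},\tfrac{t+1}{4N})\cup[\tfrac{N}{2}+\tfrac{t}{4N},\tfrac{N}{2}+\tfrac{t+1}{4N})\big)$.
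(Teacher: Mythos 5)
Your proof is correct, and your ``if'' direction coincides with the paper's: both combine Lemma \ref{Lemma1} with the substitution $\bold{C}_k=\mathcal{V}_{\bold{Z}_k}(\xi)$ and the Plancherel/tiling identity $\|\bold{Z}\|_{\ell^2}^2=\sum_{k=1}^S\int_0^{1/4N}\|\mathcal{V}_{\bold{Z}_k}(\xi)\|_{\mathbb{C}^{4N}}^2\,d\xi$, plus Theorem \ref{th1} for the upper bound. The ``only if'' direction is where you diverge. The paper argues directly: it fixes \emph{constant} vectors $\bold{C}_k$, takes an arbitrary scalar $f\in L^2(0,\frac{1}{4N})$, and builds $\bold{Z}$ with the separable fibre profile $\mathcal{V}_{\bold{Z}_k}(\xi)=\bold{C}_k f(\xi)$; the lower frame inequality then becomes $\int_0^{1/4N}|f|^2\,\Phi(\xi)\,d\xi\ge 4NA_0\sum_k\|\bold{C}_k\|^2\int_0^{1/4N}|f|^2$ for every $f$, which forces the pointwise bound a.e.\ for that tuple. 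You instead argue by contraposition, selecting $\xi$-dependent counterexample vectors measurably on a positive-measure set $\mathcal{E}$ and deriving a strict violation of the lower bound. Both routes are sound, and they are essentially dual ways of handling the same measurability issue: the paper's avoids measurable selection, but its exceptional null set a priori depends on the tuple $(\bold{C}_k)$, so one still needs to run over a countable dense family of tuples and use continuity in $\bold{C}$ to obtain the quantifier order ``for a.e.\ $\xi$, for all $\bold{C}_k$'' asserted in \eqref{mama} (the paper leaves this implicit); your contrapositive delivers that quantifier order directly, at the price of the selection step, and, as you note, the elementary exhaustion over countably many constant unit vectors lets you take the $\bold{C}_k$ constant on a positive-measure subset, at which point your construction collapses to the paper's with $f$ an indicator function. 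Your sketch of the selection argument is adequate for this finite-dimensional, continuous-in-$\bold{C}$ setting.
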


\begin{proof}
	Since $\Big|\Big|\mathcal{F}(E_{\frac{m}{M}}\bold{W}_j)(\xi)\Big|\Big|_{\mathbb{C}^S}\le B_0<\infty$, for $m \in T_1$, $j \in T_2$,  a.e. $\xi \in \Omega$,   so by Lemma \ref{Lemma1}, for each $\bold{Z}\in\ell^2(\Lambda,\mathbb{C}^S)$, we have,
	\begin{align}\label{no}
	4N\sum\limits_{\substack{ m\in T_1,j\in T_2\\  \lambda\in \Lambda}}\Big|\langle \bold{Z},E_{\frac{m}{M}}R_{2N\lambda}\bold{W}_j\rangle\Big|^2=\sum\limits_{m\in T_1}\int\limits_{0}^{\frac{1}{4N}}
	\Big|\Big|\sum\limits_{k=1}^{S}\mathcal{M}_{m,k}^*(\xi)\mathcal{V}_{\bold{Z}_k}(\xi)   \Big|\Big|_{\mathbb{C}^{2(P+1)}}^2\,d\xi,
	\end{align}	
	where $\mathcal{V}_{\bold{Z}_k}$ defined as in  \eqref{hari}.

	Assume first that  $\mathcal{G}(\Lambda,  T_1, T_2, \bold{W}_j)$  is  a DVNUG frame for $\ell^2(\Lambda,\mathbb{C}^S)$ with lower frame bound $A_0$, say. Let $f(\xi)\in L^2(0,\frac{1}{4N})$ be an arbitrary element. For $k\in\{1,2,\dots,S\}$, let
\begin{align}\label{C}
\bold{C}_k=\begin{bmatrix}
[\bold{C}_k]_1
\vspace{5pt}\\
[\bold{C}_k]_2\\
\vdots
\vspace{5pt}\\
[\bold{C}_k]_{2N}
\vspace{5pt}\\
\,\,\,\,\,\,[\bold{C}_k]_{2N+1}\\
\vdots
\vspace{5pt}\\
[\bold{C}_k]_{4N}
\end{bmatrix}_{4N\times 1} \in \mathbb{C}^{4N}.
\end{align}
 Define  $\bold{Z}\in \ell^2(\Lambda,\mathbb{C}^S)$ in term of its Fourier transform, $\mathcal{F}(\bold{Z}(\xi) )=\begin{bmatrix}
 [\mathcal{F}(\bold{Z}(\xi) )]_1
 \vspace{5pt}\\
 [\mathcal{F}(\bold{Z}(\xi) )]_2\\
 \vdots
 \vspace{5pt}\\
 [\mathcal{F}(\bold{Z}(\xi) )]_S
 \end{bmatrix}\in L^2(\Omega,\mathbb{C}^S)$, as follows:\\
 For any fixed  $k_0\in \{1,2,\dots,S\}$,
 if $\xi \in [0,\frac{1}{2}[$, then there exists unique $t'\in \{0,1,\dots,2N-1\}$ such that $\xi-\frac{t'}{4N} \in [0,\frac{1}{4N}[$, so define
\begin{align*}
[\mathcal{F}(\bold{Z})(\xi)]_{k_0}=[\bold{C}_{k_0}]_{t'+1}f(\xi-\frac{t'}{4N});
\end{align*}
and if $\xi\in [\frac{N}{2},\frac{N+1}{2}[$, there exists $t''\in\{0,1,\dots,2N-1\}$ such that $\xi-\frac{t''}{4N}-\frac{N}{2} \in [0,\frac{1}{4N}[$, so define
\begin{align*}
[\mathcal{F}(\bold{Z})(\xi))]_{k_0}=[\bold{C}_{k_0}]_{2N+1+t''}f(\xi-\frac{t''}{4N}-\frac{N}{2}).
\end{align*}
   Thus,  for $k\in \{1,2,\dots,S\}, t\in \{0,1,\dots,2N-1 \}$ and $\xi\in [0,\frac{1}{4N}[$, we have
   \begin{align}
   \Big[\mathcal{F}(\bold{Z})(\xi+\frac{t}{4N})\Big]_k&=[\bold{C}_k]_{t+1}f(\xi),\label{C1}\\
   \Big[\mathcal{F}(\bold{Z})(\xi+\frac{t}{4N}+\frac{N}{2})\Big]_k&=[\bold{C}_k]_{2N+t+1}f(\xi).\label{C2}
      \end{align}
Now,  for $k\in \{1,2,\dots,S\}$ and $\xi\in [0,\frac{1}{4N}[$, we define $\mathcal{V}_{\bold{Z}_k}(\xi)$ as follows
\begin{align}\label{C3}
\mathcal{V}_{\bold{Z}_k}(\xi)=\begin{bmatrix}
\big[\mathcal{F}(\bold{Z})(\xi)   \big]_k
\vspace{5pt}\\
\big[\mathcal{F}(\bold{Z})(\xi+\frac{1}{4N})   \big]_k\\
\vdots
\vspace{5pt}\\
\big[\mathcal{F}(\bold{Z})(\xi+\frac{2N-1}{4N})   \big]_k
\vspace{5pt}\\
\big[\mathcal{F}(\bold{Z})(\xi+\frac{N}{2})   \big]_k
\vspace{5pt}\\
\big[\mathcal{F}(\bold{Z})(\xi+\frac{N}{2}+\frac{1}{4N})   \big]_k\\
\vdots
\vspace{5pt}\\
\big[\mathcal{F}(\bold{Z})(\xi+\frac{N}{2}+\frac{2N-1}{4N})   \big]_k\\
\end{bmatrix}_{4N\times 1}.
\end{align}
By \eqref{C}, \eqref{C1},  \eqref{C2} and \eqref{C3}, for $k\in \{1,2,\dots,S\}$ and $\xi \in [0,\frac{1}{4N}[$,  we have
\begin{align}\label{V}
\mathcal{V}_{\bold{Z}_k}(\xi)=\bold{C}_kf(\xi).
\end{align}
Now, using \eqref{V} and \eqref{no},  we have
\begin{align}\label{yes}
\int\limits_{0}^{\frac{1}{4N}}|f(\xi)|^2\sum\limits_{m\in T_1}\Big|\Big|\sum\limits_{k=1}^S\mathcal{M}_{m,k}^*(\xi)\bold{C}_k\Big|\Big|_{\mathbb{C}^{2(P+1)}}^2\,d\xi&=\int\limits_{0}^{\frac{1}{4N}}\sum\limits_{m\in T_1}\Big|\Big|\sum\limits_{k=1}^S\mathcal{M}_{m,k}^*(\xi)\mathcal{V}_{\bold{Z}_k}(\xi)\Big|\Big|_{\mathbb{C}^{2(P+1)}}^2\,d\xi\nonumber\\
&=4N\sum\limits_{\substack{ m\in T_1,j\in T_2\\  \lambda\in \Lambda}}\Big|\langle \bold{Z},E_{\frac{m}{M}}R_{2N\lambda}\bold{W}_j\rangle\Big|^2.
\end{align}
Therefore,  by \eqref{yes} and the hypothesis that  $\mathcal{G}(\Lambda,  T_1, T_2, \bold{W}_j)$  is a frame with lower frame bound $A_0$, we get
\begin{align}\label{papa}
&\int\limits_{0}^{\frac{1}{4N}}|f(\xi)|^2\sum\limits_{m\in T_1}\Big|\Big|\sum\limits_{k=1}^S\mathcal{M}_{m,k}^*(\xi)\bold{C}_k\Big|\Big|_{\mathbb{C}^{2(P+1)}}^2\,d\xi \nonumber\\
&\ge 4NA_0||\bold{Z}||_{\ell^2}^2\nonumber\\
&=4NA_0||\mathcal{F}(\bold{Z})||_{L^2}^2\nonumber\\
&=4NA_0\sum\limits_{k=1}^S\sum\limits_{t=0}^{2N-1}\Bigg(\int\limits_{0}^{4N}\Big(\Big|\Big[\mathcal{F}(\bold{Z})(\xi+\frac{t}{4N}) \Big]_k\Big|^2+\Big|\Big[\mathcal{F}(\bold{Z})(\xi+\frac{t}{4N}+\frac{N}{2}) \Big]_k\Big|^2\Bigg)\,d\xi.
\end{align}
Using \eqref{C1},  \eqref{C2} and \eqref{papa}, we have
\begin{align*}
\int\limits_{0}^{\frac{1}{4N}}|f(\xi)|^2\sum\limits_{m\in T_1}\Big|\Big|\sum\limits_{k=1}^S\mathcal{M}_{m,k}^*(\xi)\bold{C}_k\Big|\Big|_{\mathbb{C}^{2(P+1)}}^2\,d\xi \ge 4NA_0 \sum\limits_{k=1}^S\int\limits_{0}^{\frac{1}{4N}} ||\bold{C}_k||_{\mathbb{C}^{4N}}^2 |f(\xi)|^2\,d\xi.
\end{align*}
Since $f(\xi)$ is an arbitrary element of $L^2(0,\frac{1}{4N})$, we conclude for $a.e.\,\xi\in (0,\frac{1}{4N})$,  that
\begin{align*}
\sum\limits_{m\in T_1}\Big|\Big|\sum\limits_{k=1}^S\mathcal{M}_{m,k}^*(\xi)\bold{C}_k\Big|\Big|_{\mathbb{C}^{2(P+1)}}^2 \ge 4NA_0\sum\limits_{k=1}^S  ||\bold{C}_k||_{\mathbb{C}^{4N}}^2.
\end{align*}

Conversely, assume that  \eqref{mama} holds. As $\Big|\Big|\mathcal{F}(E_{\frac{m}{M}}\bold{W}_j)(\xi)\Big|\Big|_{\mathbb{C}^S}\le B_0<\infty$, for $m\in T_1$, $j\in T_2$,  a.e. $\xi\in \Omega$, so by Theorem \ref{th1}, $\mathcal{G}(\Lambda,  T_1, T_2, \bold{W}_j)$  form a  DVNUG Bessel sequence with Bessel bound $2^{M+P}B_0^2S$, so we only need to verify the lower frame  inequality.  From  \eqref{no}, for each $\bold{Z}\in \ell^2(\Lambda,\mathbb{C}^S)$, we have
\begin{align}\label{rabit}
4N\sum\limits_{\substack{ m\in T_1,j\in T_2\\  \lambda\in \Lambda}}\Big|\langle \bold{Z},E_{\frac{m}{M}}R_{2N\lambda}\bold{W}_j\rangle\Big|^2&=\sum\limits_{m\in T_1}\int\limits_{0}^{\frac{1}{4N}}
\Big|\Big|\sum\limits_{k=1}^{S}\mathcal{M}_{m,k}^*(\xi)\mathcal{V}_{\bold{Z}_k}(\xi)   \Big|\Big|_{\mathbb{C}^{2(P+1)}}^2\,d\xi,
\end{align}
where $\mathcal{V}_{\bold{Z}_k}(\xi)$ as defined in \eqref{hari}.
Therefore,  by \eqref{mama} and \eqref{rabit}, we have
\begin{align*}
\sum\limits_{\substack{ m\in T_1,j\in T_2\\  \lambda\in \Lambda}}\Big|\langle \bold{Z},E_{\frac{m}{M}}R_{2N\lambda}\bold{W}_j\rangle\Big|^2&\ge A_0\int\limits_{0}^{\frac{1}{4N}} \sum\limits_{k=1}^S||\mathcal{V}_{\bold{Z}_k}(\xi)||_{\mathbb{C}^{4N}}^2\,d\xi\\
&=A_0||\mathcal{F}(\bold{Z})||_{L^2}^2\\
&=A_0||\bold{Z}||_{\ell^2}^2\,\,,
\end{align*}
which completes the proof.
\end{proof}

We conclude this section with an  applicative example of   Theorem \ref{main1}.
\begin{exa}\label{exa}
	Let $ N=2$, $r=1$, $S=2$, $M=2$, $P=7$. Then,  $\Lambda=\{0,\frac{1}{2}\}+2\mathbb{Z},\,  \Omega=[0,\frac{1}{2}[ \cup [1,\frac{3}{2}[,\, T_1=\{0,1\}$, $T_2=\{ 0,1,\dots,7\}$ 	 and take the window sequences $\{\bold{W}_j\}_{j\in T_2} \subset \ell^2(\Lambda,\mathbb{C}^2)$ as in Example \ref{bol}. Then,
\begin{align*}
 \Big|\Big|\mathcal{F}(E_{\frac{m}{M}}\bold{W}_j)(\xi)\Big|\Big|_{\mathbb{C}^S}\le 2 \  \text{for}  \  m \in T_1, \ j \in T_2;  \  \text{a.e.} \, \xi \in \Omega,
\end{align*}
and the system $\mathcal{G}(\Lambda,  T_1, T_2, \bold{W}_j)=\big \{E_{\frac{m}{2}}R_{4\lambda}\bold{W}_j:m\in T_1,j\in T_2,\lambda \in \{0,\frac{1}{2}\}+2\mathbb{Z}  \big \}$ is  DVNUG Bessel sequence with Bessel bound $2^{12}$.

	For $m\in T_1$ and $k\in \{1,2\}$, let $\mathcal{M}_{m,k}(\xi)$ be the corresponding  $8\times 16$ matrices as defined in \eqref{M}. Using any matrix calculating software,  it can be easily checked that for each $m\in T_1=\{0,1\}$, we have
	\begin{align}
	\mathcal{M}_{m,k}(\xi)\mathcal{M}^*_{m,k'}(\xi)=16\delta_{kk'}\bold{I}_8,\,\,\,\,\,\text{for}\,\,1\le k,k'\le 2.
	\end{align}  	
	Now,  for any $\bold{C}_1,\bold{C}_2\in \mathbb{C}^8$ and $a.e.\,\xi\in (0,\frac{1}{8})$, we compute
	\begin{align}\label{trump}
&\sum\limits_{m \in T_1} \Big|\Big|\sum\limits_{k=1}^2 \mathcal{M}^*_{m,k}(\xi)\bold{C}_k\Big|\Big|_{\mathbb{C}^{16}}^2 \nonumber\\
&=\Big|\Big|\sum\limits_{k=1}^2 \mathcal{M}^*_{0,k}(\xi)\bold{C}_k\Big|\Big|_{\mathbb{C}^{16}}^2+\Big|\Big|\sum\limits_{k=1} \mathcal{M}^*_{1,k}(\xi)\bold{C}_k\Big|\Big|_{\mathbb{C}^{16}}^2\nonumber\\
&=\Big\langle \sum\limits_{k=1}^2 \mathcal{M}^*_{0,k}(\xi)\bold{C}_k, \sum\limits_{k'=1}^2 \mathcal{M}^*_{0,k'}(\xi)\bold{C}_{k'} \Big\rangle
+\Big\langle \sum\limits_{k=1}^2 \mathcal{M}^*_{1,k}(\xi)\bold{C}_k, \sum\limits_{k'=1}^2 \mathcal{M}^*_{1,k'}(\xi)\bold{C}_{k'} \Big\rangle\nonumber\\
&=\langle 16\bold{I}_8\bold{C}_1,\bold{C}_1\rangle+\langle 16\bold{I}_8\bold{C}_2,\bold{C}_2\rangle + \langle 16\bold{I}_8\bold{C}_1,\bold{C}_1\rangle+\langle 16\bold{I}_8\bold{C}_2,\bold{C}_2\rangle\nonumber\\
&=16\big(||\bold{C}_1||^2+||\bold{C}_2||^2+||\bold{C}_1||^2+||\bold{C}_2||^2\big)\nonumber\\
&=32\sum\limits_{k=1}^2||\bold{C}_k||^2.
 	 	 	\end{align}
Hence,  by \eqref{trump} and Theorem \ref{main1},  $\mathcal{G}(\Lambda,  T_1, T_2, \bold{W}_j)$   is a  DVNUDG frame with bounds $4$ and $2^{12}$.
\end{exa}

\section{Perturbation of  Discrete Vector-Valued Nonuniform Gabor Frames}\label{sect4}
Perturbation theory is one of important branch of both pure mathematics and physical science, which have been studied extensively,  see \cite{Kato} for technical details. Under perturbation of frames, it is important that their fundamental properties are preserved. Some fundamental results about  perturbation  of Gabor frames can be found in \cite{ole, FZUS}. The following result shows that   DVNUG frames are stable under small perturbation.
\begin{thm}\label{per}
	Let $\mathcal{G}(\Lambda,  T_1, T_2, \bold{W}_j)$ be a  DVNUG frame for $\ell^2(\Lambda,\mathbb{C}^S)$ with  frames bounds $A_0$ and $B_0$. Let $\{\bold{V}_j\}_{j\in T_2}\subset \ell^2(\Lambda,\mathbb{C}^S)$,  and $\theta$ be a positive  real constant satisfying
	\begin{align}\label{topu}
	\Big|\Big|\mathcal{F}E_{\frac{m}{M}}(\bold{W}_j+ \bold{V}_j)(\xi)\Big|\Big|_{\mathbb{C}^S}\le \theta< 2^{M+P}\theta^2 S <A_0,
	\end{align}
	for $m\in T_1$, $j\in T_2$,  $a.e.\,\xi\in \Omega$. Then,  $\mathcal{G}(\Lambda,  T_1, T_2, \bold{V}_j)$
  is also a DVNUG frame for $\ell^2(\Lambda,\mathbb{C}^S)$ with bounds $\big(\sqrt{A_0}-\sqrt{2^{(M+P)}\theta^2 S }\big)^2$ and $\big(2^{(M+P+1)}\theta^2 S +2B_0\big)$.	
		\end{thm}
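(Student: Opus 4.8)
The plan is to carry out the classical frame--perturbation estimate at the level of analysis operators, with Theorem \ref{th1} supplying the Bessel bound for the ``error'' system. Denote by $\mathcal{T}_W^{*}$ and $\mathcal{T}_V^{*}$ the analysis operators of $\mathcal{G}(\Lambda,T_1,T_2,\bold{W}_j)$ and of $\mathcal{G}(\Lambda,T_1,T_2,\bold{V}_j)$, so that $\mathcal{T}_W^{*}\bold{Z}=\{\langle\bold{Z},E_{\frac{m}{M}}R_{2N\lambda}\bold{W}_j\rangle\}_{\lambda,m,j}$, and recall that the frame hypothesis on $\mathcal{G}(\Lambda,T_1,T_2,\bold{W}_j)$ reads $A_0\|\bold{Z}\|_{\ell^2}^2\le\|\mathcal{T}_W^{*}\bold{Z}\|_{\ell^2}^2\le B_0\|\bold{Z}\|_{\ell^2}^2$ for all $\bold{Z}\in\ell^2(\Lambda,\mathbb{C}^S)$.

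First I would apply Theorem \ref{th1} to the window sequences $\bold{W}_j+\bold{V}_j$: the uniform bound $\|\mathcal{F}(E_{\frac{m}{M}}(\bold{W}_j+\bold{V}_j))(\xi)\|_{\mathbb{C}^S}\le\theta$ in \eqref{topu} is exactly the hypothesis needed there, so $\mathcal{G}(\Lambda,T_1,T_2,\bold{W}_j+\bold{V}_j)$ is a DVNUG Bessel sequence with Bessel bound $R:=2^{(M+P)}\theta^2 S$; writing $\mathcal{T}_{W+V}^{*}$ for its analysis operator, this says $\|\mathcal{T}_{W+V}^{*}\bold{Z}\|_{\ell^2}\le\sqrt{R}\,\|\bold{Z}\|_{\ell^2}$. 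Next I would use the linear decomposition $\bold{V}_j=-\bold{W}_j+(\bold{W}_j+\bold{V}_j)$. Since $|\langle\bold{Z},E_{\frac{m}{M}}R_{2N\lambda}(-\bold{W}_j)\rangle|=|\langle\bold{Z},E_{\frac{m}{M}}R_{2N\lambda}\bold{W}_j\rangle|$, the sequence $-\mathcal{T}_W^{*}\bold{Z}$ has the same $\ell^2(\Lambda,T_1,T_2,\mathbb{C})$--norm as $\mathcal{T}_W^{*}\bold{Z}$, and by additivity of the inner product in the window slot we get, as elements of $\ell^2(\Lambda,T_1,T_2,\mathbb{C})$, the identity $\mathcal{T}_V^{*}\bold{Z}=-\mathcal{T}_W^{*}\bold{Z}+\mathcal{T}_{W+V}^{*}\bold{Z}$. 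In particular $\mathcal{T}_V^{*}\bold{Z}$ is a genuine $\ell^2$--sequence, being a sum of two such, so the ordinary Hilbert--space triangle inequality applies to it.

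The two frame inequalities for $\mathcal{G}(\Lambda,T_1,T_2,\bold{V}_j)$ then drop out. For the lower bound, the reverse triangle inequality gives $\|\mathcal{T}_V^{*}\bold{Z}\|\ge\|\mathcal{T}_W^{*}\bold{Z}\|-\|\mathcal{T}_{W+V}^{*}\bold{Z}\|\ge(\sqrt{A_0}-\sqrt{R})\,\|\bold{Z}\|$; since \eqref{topu} forces $R=2^{(M+P)}\theta^2 S<A_0$, the constant $\sqrt{A_0}-\sqrt{R}$ is strictly positive, and squaring yields the lower frame bound $\big(\sqrt{A_0}-\sqrt{2^{(M+P)}\theta^2 S}\,\big)^2$. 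For the upper bound, $\|\mathcal{T}_V^{*}\bold{Z}\|\le\|\mathcal{T}_W^{*}\bold{Z}\|+\|\mathcal{T}_{W+V}^{*}\bold{Z}\|\le(\sqrt{B_0}+\sqrt{R})\,\|\bold{Z}\|$, and the elementary inequality $2\sqrt{B_0 R}\le B_0+R$ gives $(\sqrt{B_0}+\sqrt{R})^2\le 2B_0+2R=2B_0+2^{(M+P+1)}\theta^2 S$; this in particular shows $\mathcal{G}(\Lambda,T_1,T_2,\bold{V}_j)$ is Bessel, completing the verification that it is a DVNUG frame with the stated bounds.

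The main thing to watch is organizational rather than computational: one must first secure that all three analysis sequences $\mathcal{T}_W^{*}\bold{Z}$, $\mathcal{T}_V^{*}\bold{Z}$, $\mathcal{T}_{W+V}^{*}\bold{Z}$ live in $\ell^2(\Lambda,T_1,T_2,\mathbb{C})$ before invoking triangle inequalities --- handled by the frame hypothesis for $\mathcal{G}(\Lambda,T_1,T_2,\bold{W}_j)$ and by Theorem \ref{th1} for the error system --- and one must read the chain $\theta<2^{(M+P)}\theta^2 S<A_0$ in \eqref{topu} correctly so that the strict inequality $R<A_0$, hence positivity of $\sqrt{A_0}-\sqrt{R}$, is available. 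Everything else is the standard Christensen--type perturbation argument together with $2ab\le a^2+b^2$.
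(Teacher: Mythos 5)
Your proposal is correct and follows essentially the same route as the paper: the decomposition $\bold{V}_j=-\bold{W}_j+(\bold{W}_j+\bold{V}_j)$, Theorem \ref{th1} applied to the sum windows to obtain the Bessel bound $2^{(M+P)}\theta^2 S$, the reverse triangle inequality for the lower bound, and the estimate $(\sqrt{B_0}+\sqrt{R})^2\le 2B_0+2R$ for the upper bound (the paper gets the same upper bound by applying $|a-b|^2\le 2|a|^2+2|b|^2$ termwise via Lemma \ref{lemmahari} before summing, which is the same elementary inequality in a different order). No gaps.
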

	\begin{proof}
For any  $\bold{Z}\in \ell^2(\Lambda,\mathbb{C}^S)$, using Lemma \ref{lemmahari}, we have
\begin{align}
&\sum\limits_{\substack{ m\in T_1,j\in T_2\\  \lambda\in \Lambda}}\Big|\langle \bold{Z},E_{\frac{m}{M}}R_{2N\lambda}\bold{V}_j\rangle\Big|^2 \nonumber\\
&=\sum\limits_{\substack{ m\in T_1,j\in T_2\\  \lambda\in \Lambda}}\Big|\langle \bold{Z},E_{\frac{m}{M}}R_{2N\lambda}(\bold{V}_j+\bold{W}_j)\rangle-\langle \bold{Z},E_{\frac{m}{M}}R_{2N\lambda} \bold{W}_j\rangle\Big|^2 \label{godimedia1}\\
&\le 2\Bigg( \sum\limits_{\substack{ m\in T_1,j\in T_2\\  \lambda\in \Lambda}}\Big|\langle \bold{Z},E_{\frac{m}{M}}R_{2N\lambda}(\bold{V}_j+\bold{W}_j)\rangle\Big|^2 +\sum\limits_{\substack{ m\in T_1,j\in T_2\\  \lambda\in \Lambda}}\Big|\langle \bold{Z},E_{\frac{m}{M}}R_{2N\lambda} \bold{W}_j\rangle\Big|^2\Bigg)\label{56}.
\end{align}

  Also, by \eqref{topu} and Theorem \ref{th1}, we have
\begin{align}\label{jaja}
\sum\limits_{\substack{ m\in T_1,j\in T_2\\  \lambda\in \Lambda}}\Big|\langle \bold{Z},E_{\frac{m}{M}}R_{2N\lambda}(\bold{W}_j+\bold{V}_j)\rangle\Big|^2 \le 2^{(M+P)}\theta^2S||\bold{Z}||^2.
\end{align}
	 Now, by the given hypothesis, \eqref{56}  and \eqref{jaja}, we obtain
	 \begin{align}\label{popu}
	 \sum\limits_{\substack{ m\in T_1,j\in T_2\\  \lambda\in \Lambda}}\Big|\langle \bold{Z},E_{\frac{m}{M}}R_{2N\lambda}\bold{V}_j\rangle\Big|^2\le \big(2^{(M+P+1)}\theta^2 S +2B_0\big)||\bold{Z}||^2.
	 \end{align}
By invoking inequalities \eqref{godimedia1} and \eqref{jaja}, we  have 	
\begin{align}\label{gopu}
&\sqrt{\sum\limits_{\substack{ m\in T_1,j\in T_2\\  \lambda\in \Lambda}}\Big|\langle \bold{Z},E_{\frac{m}{M}}R_{2N\lambda}\bold{V}_j\rangle\Big|^2} \nonumber\\
&=\sqrt{\sum\limits_{\substack{ m\in T_1,j\in T_2\\  \lambda\in \Lambda}}\Big|\langle \bold{Z},E_{\frac{m}{M}}R_{2N\lambda}(\bold{V}_j+\bold{W}_j)\rangle-\langle \bold{Z},E_{\frac{m}{M}}R_{2N\lambda} \bold{W}_j\rangle\Big|^2}\nonumber\\	
&\ge \sqrt{\sum\limits_{\substack{ m\in T_1,j\in T_2\\  \lambda\in \Lambda}}\Big|\langle \bold{Z},E_{\frac{m}{M}}R_{2N\lambda} \bold{W}_j\rangle\Big|^2}
-\sqrt{\sum\limits_{\substack{ m\in T_1,j\in T_2\\  \lambda\in \Lambda}}\Big|\langle \bold{Z},E_{\frac{m}{M}}R_{2N\lambda}(\bold{V}_j+\bold{W}_j)\Big|^2}\nonumber\\	
&\ge \sqrt{A_0||\bold{Z}||^2}-\sqrt{(2^{(M+P)}\theta^2S)||\bold{Z}||^2}\nonumber\\
&=\big(\sqrt{A_0}-\sqrt{2^{(M+P)}\theta^2 S }\big)||\bold{Z}||, \ \ \bold{Z}\in \ell^2(\Lambda,\mathbb{C}^S).	
\end{align}
From   \eqref{popu} and  \eqref{gopu}, we conclude that   $\mathcal{G}(\Lambda,  T_1, T_2, \bold{V}_j)$ is a DVNUG frame for $\ell^2(\Lambda,\mathbb{C}^S)$ with the desired frame bounds.
\end{proof}
In the end of this section, we illustrate  Theorem \ref{per} with an example.
\begin{exa}
Let $ N=2$, $r=1$, $S=2$, $M=2$, $P=7$. Then, $\Lambda=\{0,\frac{1}{2}\}+2\mathbb{Z}$,  $\Omega=[0,\frac{1}{2}[ \cup [1,\frac{3}{2}[$, $ T_1=\{0,1\}$, $T_2=\{ 0,1,\dots,7\}$  and $\mathcal{G}(\Lambda,  T_1, T_2, \bold{W}_j)$  be a DVNUG frame with bounds $A_0=4$ and $B_0=2^{12}$, as given in Example \ref{exa}. Define $\{\bold{V}_j\}_{j\in T_2}\subset \ell^2(\Lambda,\mathbb{C}^2)$ as follows.
\begin{align*}
&\bold{V}_0(0)=\begin{bmatrix}
-16/17\\
0
\end{bmatrix},\,\,\,\, \bold{V}_0(4)=\begin{bmatrix}
-1\\
0
\end{bmatrix},\,\,\,\,  \bold{V}_0(\lambda)=\begin{bmatrix}
0\\
0
\end{bmatrix} \,\text{for}\,\lambda\in \Lambda\setminus\{0,4\};\\
&\bold{V}_1(0)=\begin{bmatrix}
-16/17\\
0
\end{bmatrix},\, \,\,\,  \bold{V}_1(4)=\begin{bmatrix}
1\\
0
\end{bmatrix},\,\,\,\,  \bold{V}_1(\lambda)=\begin{bmatrix}
0\\
0
\end{bmatrix} \,\text{for}\,\lambda\in \Lambda\setminus\{0,4\};\\
&\bold{V}_2(0)=\begin{bmatrix}
0\\
-16/17
\end{bmatrix},\, \,\,\,  \bold{V}_2(4)=\begin{bmatrix}
0\\
-1
\end{bmatrix},\,\,\,\, \bold{V}_2(\lambda)=\begin{bmatrix}
0\\
0
\end{bmatrix} \,\text{for}\,\lambda\in \Lambda\setminus\{0,4\};\\
&\bold{V}_3(0)=\begin{bmatrix}
0\\
-16/17
\end{bmatrix},\,  \,\,\,\,  \bold{V}_3(4)=\begin{bmatrix}
0\\
1
\end{bmatrix},\,\,\,\, \bold{V}_3(\lambda)=\begin{bmatrix}
0\\
0
\end{bmatrix}\,\text{for}\,\lambda\in \Lambda\setminus\{0,4\};\\
&\bold{V}_4\Big(\frac{1}{2}\Big)=\begin{bmatrix}
-1\\
0
\end{bmatrix},\, \,\,\,  \bold{V}_4\Big(\frac{1}{2}+4\Big)=\begin{bmatrix}
-1\\
0
\end{bmatrix},\,\,\,\, \bold{V}_4(\lambda)=\begin{bmatrix}
0\\
0
\end{bmatrix} \,\text{for}\,\lambda\in \Lambda\setminus\Big\{\frac{1}{2},\frac{1}{2}+4\Big\};\\
&\bold{V}_5\Big(\frac{1}{2}\Big)=\begin{bmatrix}
-1\\
0
\end{bmatrix},\,  \bold{V}_5\Big(\frac{1}{2}+4\Big)=\begin{bmatrix}
1\\
0
\end{bmatrix},\,\,\,\, \bold{V}_5(\lambda)=\begin{bmatrix}
0\\
0
\end{bmatrix} \,\text{for}\,\lambda\in \Lambda\setminus\Big\{\frac{1}{2},\frac{1}{2}+4\Big\};\\
&\bold{V}_6\Big(\frac{1}{2}\Big)=\begin{bmatrix}
0\\
-1
\end{bmatrix},\,  \bold{V}_6\Big(\frac{1}{2}+4\Big)=\begin{bmatrix}
0\\
-1
\end{bmatrix},\,\,\,\, \bold{V}_6(\lambda)=\begin{bmatrix}
0\\
0
\end{bmatrix} \,\text{for}\,\lambda\in \Lambda\setminus\Big\{\frac{1}{2},\frac{1}{2}+4\Big\};\\
&\bold{V}_7\Big(\frac{1}{2}\Big)=\begin{bmatrix}
0\\
-1
\end{bmatrix},\, \,\,  \bold{V}_7\Big(\frac{1}{2}+4\Big)=\begin{bmatrix}
0\\
1
\end{bmatrix},\,\,\,\, \bold{W}_7(\lambda)=\begin{bmatrix}
0\\
0
\end{bmatrix} \,\text{for}\,\lambda\in \Lambda\setminus\Big\{\frac{1}{2},\frac{1}{2}+4\Big\}.
\end{align*}	
Now for $m\in T_1=\{0,1 \},j\in T_2  $, the system  $E_{\frac{m}{2}}\bold{W}_j=\big\{(E_{\frac{m}{2}}\bold{W}_j)(\lambda)\big\}_{\lambda\in \Lambda} $
are as follows:
\begin{align*}
&\big(E_{\frac{m}{2}} \bold{V}_0\big)(0)=\begin{bmatrix}
-16/17\\
0
\end{bmatrix},\, \,\,\,\,  \big(E_{\frac{m}{2}} \bold{V}_0\big)(4)=\begin{bmatrix}
-1\\
0
\end{bmatrix},\,\,\,\,  \big(E_{\frac{m}{2}} \bold{V}_0\big)(\lambda)=\begin{bmatrix}
0\\
0
\end{bmatrix}\,\text{for}\,\lambda\in \Lambda\setminus\{0,4\};\\
&\big(E_{\frac{m}{2}} \bold{V}_1\big)(0)=\begin{bmatrix}
-16/17\\
0
\end{bmatrix},\, \,\,\, \big(E_{\frac{m}{2}} \bold{V}_1\big)(4)=\begin{bmatrix}
1\\
0
\end{bmatrix},\,\, \,\, \big(E_{\frac{m}{2}} \bold{V}_1\big)(\lambda)=\begin{bmatrix}
0\\
0
\end{bmatrix}\,\text{for}\,\lambda\in \Lambda\setminus\{0,4\};\\
&\big(E_{\frac{m}{2}} \bold{V}_2\big)(0)=\begin{bmatrix}
0\\
-16/17
\end{bmatrix},\, \,\,\,  \big(E_{\frac{m}{2}} \bold{V}_2\big)(4)=\begin{bmatrix}
0\\
-1
\end{bmatrix},\,\, \,\,  \big(E_{\frac{m}{2}} \bold{V}_2\big)(\lambda)=\begin{bmatrix}
0\\
0
\end{bmatrix} \,\text{for}\,\lambda\in \Lambda\setminus\{0,4\};\\
&\big(E_{\frac{m}{2}} \bold{V}_3\big)(0)=\begin{bmatrix}
0\\
-16/17
\end{bmatrix},\, \,\,\,\, \big(E_{\frac{m}{2}} \bold{V}_3\big)(4)=\begin{bmatrix}
0\\
1
\end{bmatrix},\,\,\,\, \big(E_{\frac{m}{2}} \bold{V}_3\big)(\lambda)=\begin{bmatrix}
0\\
0
\end{bmatrix}\,\text{for}\,\lambda\in \Lambda\setminus\{0,4\};\\
&\big(E_{\frac{m}{2}} \bold{V}_4\big)\Big(\frac{1}{2}\Big)=\begin{bmatrix}
-e^{\pi i \frac{m}{2}}\\
0
\end{bmatrix},  \big(E_{\frac{m}{2}} \bold{V}_4\big)\Big(\frac{1}{2}+4\Big)=\begin{bmatrix}
-e^{\pi i \frac{m}{2}}\\
0
\end{bmatrix}, \,\, \,\,  \big(E_{\frac{m}{2}} \bold{V}_4\big)(\lambda)=\begin{bmatrix}
0\\
0
\end{bmatrix}\,\text{for}\,\lambda\in \Lambda\setminus\Big\{\frac{1}{2},\frac{1}{2}+4\Big\};\\
&\big(E_{\frac{m}{2}} \bold{V}_5\big)\Big(\frac{1}{2}\Big)=\begin{bmatrix}
-e^{\pi i \frac{m}{2}}\\
0
\end{bmatrix}, \big(E_{\frac{m}{2}} \bold{V}_5\big)\Big(\frac{1}{2}+4\Big)=\begin{bmatrix}
e^{\pi i \frac{m}{2}}\\
0
\end{bmatrix},\,\, \,\, \big(E_{\frac{m}{2}} \bold{V}_5\big)(\lambda)=\begin{bmatrix}
0\\
0
\end{bmatrix} \,\text{for}\,\lambda\in \Lambda\setminus\Big\{\frac{1}{2},\frac{1}{2}+4\Big\};\\
&\big(E_{\frac{m}{2}} \bold{V}_6\big)\Big(\frac{1}{2}\Big)=\begin{bmatrix}
0\\
-e^{\pi i \frac{m}{2}}
\end{bmatrix},\big(E_{\frac{m}{2}} \bold{V}_6\big)\Big(\frac{1}{2}+4\Big)=\begin{bmatrix}
0\\
-e^{\pi i \frac{m}{2}}
\end{bmatrix},  \,\, \,\,  \big(E_{\frac{m}{2}} \bold{V}_6\big)(\lambda)=\begin{bmatrix}
0\\
0
\end{bmatrix} \,\text{for}\,\lambda\in \Lambda\setminus\Big\{\frac{1}{2},\frac{1}{2}+4\Big\};\\
&\big(E_{\frac{m}{2}} \bold{V}_7\big)\Big(\frac{1}{2}\Big)=\begin{bmatrix}
0\\
-e^{\pi i \frac{m}{2}}
\end{bmatrix},\big(E_{\frac{m}{2}} \bold{V}_7\big)\Big(\frac{1}{2}+4\Big)=\begin{bmatrix}
0\\
e^{\pi i \frac{m}{2}}
\end{bmatrix},   \,\, \,\, \big(E_{\frac{m}{2}} \bold{V}_7\big)(\lambda)=\begin{bmatrix}
0\\
0
\end{bmatrix} \,\text{for}\,\lambda\in \Lambda\setminus\Big\{\frac{1}{2},\frac{1}{2}+4\Big\}.
\end{align*}	
And, for $m\in T_1$ and $j\in T_2$, the corresponding Fourier transforms $\mathcal{F}(E_{\frac{m}{M}}\bold{W}_j)(\xi)$
are as follows:	
\begin{align*}
&\mathcal{F}(E_{\frac{m}{M}}\bold{V}_0)(\xi)=\begin{bmatrix}
-16/17-e^{8\pi i \xi}\\
0
\end{bmatrix}, &\mathcal{F}(E_{\frac{m}{M}}\bold{V}_1)(\xi)=\begin{bmatrix}
-16/17+e^{8\pi i \xi}\\
0
\end{bmatrix};\\ &\mathcal{F}(E_{\frac{m}{M}}\bold{V}_2)(\xi)=\begin{bmatrix}
0\\
-16/17-e^{8\pi i \xi}
\end{bmatrix},
&\mathcal{F}(E_{\frac{m}{M}}\bold{V}_3)(\xi)=\begin{bmatrix}
0\\
-16/17+e^{8\pi i \xi}
\end{bmatrix};\\ &\mathcal{F}(E_{\frac{m}{M}}\bold{V}_4)(\xi)=\begin{bmatrix}
-e^{\pi i\frac{m}{2}}(e^{\pi i \xi}+e^{9\pi i\xi})\\
0
\end{bmatrix}, &\mathcal{F}(E_{\frac{m}{M}}\bold{V}_5)(\xi)=\begin{bmatrix}
e^{\pi i\frac{m}{2}}(e^{9\pi i\xi}-e^{\pi i \xi})\\
0
\end{bmatrix};\\
&\mathcal{F}(E_{\frac{m}{M}}\bold{V}_6)(\xi)=\begin{bmatrix}
0\\
-e^{\pi i\frac{m}{2}}(e^{\pi i \xi}+e^{9\pi i\xi})
\end{bmatrix},\,\,&\mathcal{F}(E_{\frac{m}{M}}\bold{V}_7)(\xi)=\begin{bmatrix}
0\\
e^{\pi i\frac{m}{2}}(e^{9\pi i\xi}-e^{\pi i \xi})
\end{bmatrix}.	
\end{align*}
It is easy to verify for $m\in T_1$, $j\in T_2,$ and $\xi \in \Omega$ that,
\begin{align*}
	\Big|\Big|\mathcal{F}E_{\frac{m}{M}}(\bold{W}_j + \bold{V}_j)(\xi)\Big|\Big|_{\mathbb{C}^2}&=\Big|\Big|\mathcal{F}(E_{\frac{m}{M}}\bold{W}_j(\xi))+ \mathcal{F}(E_{\frac{m}{M}}\bold{V}_j)(\xi))\Big|\Big|_{\mathbb{C}^2}\le 1/17=\theta,
\end{align*}
and $2^{7+2}\times\theta^2 \times 2=3.54 <4=A_0   $. Hence,  by Theorem \ref{per}, $\mathcal{G}(\Lambda,  T_1, T_2, \bold{V}_j)$ is a  DVNUG frame for the space $\ell^2(\Lambda,\mathbb{C}^2)$.
	
\end{exa}
\section{Interplay Between Window Sequences of Discrete Vector-Valued  Nonuniform System and Their Corresponding Coordinates}\label{sect5}	
In real world,  many times it is convenient to work with compact data set due to various reasons like incomplete data points or the given data set  is so huge and the each particual detail of the  given data is not so much important, for example if we have data set of last one thousand days of temperature $\bold{T}_d$ which is calculated in three times in a day, like morning $m_d$ , afternoon $a_d$ and night $n_d$, where $d$ denotes the day, that is,  $\bold{T}_d=\begin{bmatrix}
m_d\\
a_d\\
n_d
\end{bmatrix},
$
and we are working in a problem where we need last thousand days's temperature but not specifically three times in a day, or may be in our data points some entries are missing, then we can work with  average data points of temperature $\text{\larger[2]$\mu$}_{\bold{T}_d}=\frac{1}{3}(m_d+a_d+n_d) $. The problem becomes more critical if we have nonuniform data set for example if we are taking temperature of each 2 hours and half an hour later of each 2 hours,    i.e.,  $2k, 2k+\frac{1}{2}, k=0,1,\dots$. This motivate us to find frame condition for arithmetic mean of window sequences associated with a given DVNUG frame for the discrete vector-valued nonuniform signal space  $\ell^2(\Lambda,\mathbb{C}^S)$. In this direction, we have the following result.
\begin{thm}\label{InterthI}
Let $\{\bold{W}_j\}_{j\in T_2}\in \ell^2(\Lambda,\mathbb{C}^S)$  be such that  $\mathcal{G}(\Lambda,  T_1, T_2, \bold{W}_j)$ is a  DVNUG frame for $\ell^2(\Lambda,\mathbb{C}^S)$ with frame bounds $A_0$ and $B_0$. Then, $\mathcal{G}(\Lambda,  T_1, T_2, \text{\larger[3]$\mu$}_{\bold{W}_j})$ is DVNUG frame of $\ell^2(\Lambda,\mathbb{C})$ with frame  bounds $\frac{A_0}{S}$ and $\frac{B_0}{S}$ .
\end{thm}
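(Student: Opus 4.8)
The plan is to derive the frame property of $\mathcal{G}(\Lambda, T_1, T_2, \mu_{\bold{W}_j})$ in the scalar space $\ell^2(\Lambda,\mathbb{C})$ straight from the frame inequality \eqref{bessel} for $\mathcal{G}(\Lambda, T_1, T_2, \bold{W}_j)$ in $\ell^2(\Lambda,\mathbb{C}^S)$, by introducing a scaled \emph{diagonal} embedding of $\ell^2(\Lambda,\mathbb{C})$ into $\ell^2(\Lambda,\mathbb{C}^S)$ that intertwines the modulation and shift operators and that converts the scalar pairing $\langle\,\cdot\,,\mu_{\bold{W}_j}\rangle$ into the vector-valued pairing $\langle\,\cdot\,,\bold{W}_j\rangle$.

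Concretely, first I would define $\Phi:\ell^2(\Lambda,\mathbb{C})\to\ell^2(\Lambda,\mathbb{C}^S)$ by $[\Phi(z)(\lambda)]_k=\frac1S\,z(\lambda)$ for all $\lambda\in\Lambda$ and $k\in\{1,2,\dots,S\}$. A one-line computation gives $\|\Phi(z)\|_{\ell^2}^2=\frac1S\|z\|_{\ell^2}^2$, so $\Phi$ is well defined and bounded. Then I would record two elementary facts: (i) expanding the definition of the arithmetic mean sequence yields the exact identity $\langle z,\mu_{\bold{V}}\rangle_{\ell^2(\Lambda,\mathbb{C})}=\langle\Phi(z),\bold{V}\rangle_{\ell^2(\Lambda,\mathbb{C}^S)}$ for every $z\in\ell^2(\Lambda,\mathbb{C})$ and $\bold{V}\in\ell^2(\Lambda,\mathbb{C}^S)$ (this is exactly why the normalization $1/S$ is chosen rather than $1/\sqrt S$); and (ii) since $E_{\frac{m}{M}}$ and $R_{2N\lambda}$ multiply each coordinate by the same scalar, they commute with the averaging operation, i.e. $E_{\frac{m}{M}}R_{2N\lambda}\mu_{\bold{W}_j}=\mu_{E_{\frac{m}{M}}R_{2N\lambda}\bold{W}_j}$. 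Combining (i) and (ii) gives, for all $m\in T_1$, $j\in T_2$, $\lambda\in\Lambda$ and $z\in\ell^2(\Lambda,\mathbb{C})$,
\[
\big\langle z,\,E_{\frac{m}{M}}R_{2N\lambda}\mu_{\bold{W}_j}\big\rangle=\big\langle\Phi(z),\,E_{\frac{m}{M}}R_{2N\lambda}\bold{W}_j\big\rangle.
\]

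Finally I would apply the frame inequality \eqref{bessel} for $\mathcal{G}(\Lambda, T_1, T_2, \bold{W}_j)$ to the vector $\Phi(z)\in\ell^2(\Lambda,\mathbb{C}^S)$ and substitute the displayed identity together with $\|\Phi(z)\|_{\ell^2}^2=\frac1S\|z\|_{\ell^2}^2$; this turns $A_0\|\Phi(z)\|_{\ell^2}^2\le\sum|\langle\Phi(z),E_{\frac{m}{M}}R_{2N\lambda}\bold{W}_j\rangle|^2\le B_0\|\Phi(z)\|_{\ell^2}^2$ into precisely
\[
\frac{A_0}{S}\,\|z\|_{\ell^2}^2\ \le\ \sum_{\substack{m\in T_1,\,j\in T_2\\ \lambda\in\Lambda}}\big|\big\langle z,\,E_{\frac{m}{M}}R_{2N\lambda}\mu_{\bold{W}_j}\big\rangle\big|^2\ \le\ \frac{B_0}{S}\,\|z\|_{\ell^2}^2,
\]
which is the desired conclusion (recall from Section \ref{Sect2} that $\mu_{\bold{W}_j}\in\ell^2(\Lambda,\mathbb{C})$, so $\mathcal{G}(\Lambda, T_1, T_2, \mu_{\bold{W}_j})$ is a bona fide DVNUG system for the case $S=1$). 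I do not expect a genuine obstacle here: the argument is essentially a single change of variables, and the only points requiring care are keeping track of which Hilbert space each inner product and norm lives in, and checking the two identities (i) and (ii), both of which are immediate from the definitions.
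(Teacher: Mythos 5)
Your proposal is correct and follows essentially the same route as the paper: both arguments embed a scalar sequence $z$ diagonally into $\ell^2(\Lambda,\mathbb{C}^S)$ (the paper uses $[\bold{Z}(\lambda)]_k=z(\lambda)$ and tracks the factors $S$ and $S^2$ explicitly, while you fold the normalization $1/S$ into the embedding $\Phi$ so the bounds come out directly) and then apply the frame inequality for $\mathcal{G}(\Lambda,T_1,T_2,\bold{W}_j)$ to the embedded vector, using the identity $\langle z,E_{\frac{m}{M}}R_{2N\lambda}\text{\larger[3]$\mu$}_{\bold{W}_j}\rangle=\frac1S\langle \bold{Z},E_{\frac{m}{M}}R_{2N\lambda}\bold{W}_j\rangle$. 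The two write-ups differ only in bookkeeping, not in substance.
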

\begin{proof}
	Let $x=\{x(\lambda)\}_{\lambda\in \Lambda}\in \ell^2(\Lambda,\mathbb{C}) $ be an arbitrary element. Define $\bold{Z}\in \ell^2(\Lambda,\mathbb{C}^S)$ as follows
	\begin{align*}
	\bold{Z}=\Bigg\{ \begin{bmatrix}
	[\bold{Z}(\lambda)]_1
	\vspace{5pt}\\
	[\bold{Z}(\lambda)]_2\\
	\vdots
	\vspace{5pt}\\
	[\bold{Z}(\lambda)]_S
	\end{bmatrix}  \Bigg\}_{\lambda\in \Lambda}=\Bigg\{ \begin{bmatrix}
	x(\lambda)
	\vspace{5pt}\\
	x(\lambda)\\
	\vdots
	\vspace{5pt}\\
	x(\lambda)
	\end{bmatrix}  \Bigg\}_{\lambda\in \Lambda}.
	\end{align*}
Clearly, $||\bold{Z}||_{\ell^2}=\sqrt{S}||x||_{\ell^2}$.  Since $\mathcal{G}(\Lambda,  T_1, T_2, \bold{W}_j)$ is a  DVNUG farme with bounds $A_0$ and $B_0$,  we have
	\begin{align}\label{lala}
	SA_0||x||^2\le \sum\limits_{\substack{ m\in T_1,j\in T_2\\  \lambda\in \Lambda}}\Big|\langle \bold{Z},E_{\frac{m}{M}}R_{2N\lambda}\bold{W}_j\rangle\Big|^2\le SB_0||x||^2, \ x \in \ell^2(\Lambda,\mathbb{C}).
	\end{align}
	Now consider,
	\begin{align}\label{lala2}
	\sum\limits_{\substack{ m\in T_1,j\in T_2\\  \lambda\in \Lambda}}\Big|\langle \bold{Z},E_{\frac{m}{M}}R_{2N\lambda}\bold{W}_j\rangle\Big|^2&=S^2\sum\limits_{\substack{ m\in T_1,j\in T_2\\  \lambda\in \Lambda}}\Bigg|\sum\limits_{\substack{ k=1,  \lambda'\in \Lambda}}^S[\bold{Z}(\lambda')]_k
	\frac{1}{S}\overline{E_{\frac{m}{M}}R_{2N\lambda}[\bold{W}_j(\lambda')]_k}\Bigg|^2\nonumber\\
	&=S^2\sum\limits_{\substack{ m\in T_1,j\in T_2\\  \lambda\in \Lambda}}\Bigg|\sum\limits_{  \lambda'\in \Lambda} x(\lambda')
	\overline{E_{\frac{m}{M}}R_{2N\lambda}\frac{1}{S}\sum\limits_{k=1}^S[\bold{W}_j(\lambda')]_k}\Bigg|^2\nonumber\\
	&=S^2\sum\limits_{\substack{ m\in T_1,j\in T_2\\  \lambda\in \Lambda}}\Bigg|\sum\limits_{  \lambda'\in \Lambda} x(\lambda')\overline{E_{\frac{m}{M}}R_{2N\lambda}\text{\larger[3]$\mu$}_{\bold{W}_j}(\lambda')}\Bigg|^2\nonumber\\
	&=S^2\sum\limits_{\substack{ m\in T_1,j\in T_2\\  \lambda\in \Lambda}}\Big|\langle x,E_{\frac{m}{M}}R_{2N\lambda}\text{\larger[3]$\mu$}_{\bold{W}_j}\rangle\Big|^2.
	\end{align}
	Using \eqref{lala} and \eqref{lala2}, we have
	\begin{align*}
	\frac{A_0}{S}||x||^2\le \sum\limits_{\substack{ m\in T_1,j\in T_2\\  \lambda\in \Lambda}}\Big|\langle \bold{Z},E_{\frac{m}{M}}R_{2N\lambda}\bold{W}_j\rangle\Big|^2\le \frac{B_0}{S}||x||^2,  \ x \in \ell^2(\Lambda,\mathbb{C}).
	\end{align*}
This concludes the result.
\end{proof}
To conclude the section, we give  relationships  between the window sequences of DVNUG system and its corresponding coordinates.
For different types of relations of frames of  matrix-valued  wave packet  systems and its associated atomic wave packets, we refer to  \cite{JV202021}.
\begin{thm}\label{120}
	Let $\{\bold{W}_j\}_{j\in T_2} \subset \ell^2(\Lambda,\mathbb{C}^S) $. Define $S\times (P+1)$  window matrix, $\bold{M}=\begin{bmatrix}
	\bold{W}_0 & \bold{W}_1 \,\, \cdots\,\, \bold{W}_P
	\end{bmatrix} $, that is,
	\begin{align*}
	\bold{M}(\lambda)=\begin{bmatrix}
	\big[\bold{W}_0(\lambda)\big]_1 & \big[\bold{W}_1(\lambda)\big]_1 &\cdots &\big[\bold{W}_P(\lambda)\big]_1
	\vspace{5pt}\\
\big[\bold{W}_0(\lambda)\big]_2 & \big[\bold{W}_1(\lambda)\big]_2 &\cdots &\big[\bold{W}_P(\lambda)\big]_2\\
\vdots &\vdots &\vdots &\vdots
\vspace{5pt}\\
\big[\bold{W}_0(\lambda)\big]_S & \big[\bold{W}_1(\lambda)\big]_S &\cdots &\big[\bold{W}_P(\lambda)\big]_S
	\end{bmatrix}_{S\times (P+1)}\,\,\,\,\,\text{for}\,\,\,\,\lambda\in \Lambda,
	\end{align*}
	where $\big[\bold{M}\big]_{l,l'}=[\bold{W}_{(l'-1)}]_{l}  $ is $l$th row and $l'$th column of matrix $\bold{M}$ with $1\le l \le S, 1\le l'\le P+1$.
	Then the following holds.
	\begin{enumerate}[$(i)$]
\item \label{121}   If  $\mathcal{G}(\Lambda,  T_1, T_2, \bold{W}_j)$  is a  DVNUG frame for $\ell^2(\Lambda,\mathbb{C}^S)$ with bounds $A_0$ and $B_0$, then each row of the window matrix $\bold{M}$ generates a discrete nonuniform Gabor frame for $\ell^2(\Lambda,\mathbb{C})$. To be precise, for each $l_0\in \{1,2,\dots,S\}$, the system
\begin{align*}
 \{E_{\frac{m}{M}}R_{2N\lambda}[\bold{M}]_{l_0l'} :\lambda\in \Lambda,m\in T_1,1\le l'\le P+1  \}
\end{align*}
constitutes  a discrete nonuniform Gabor frame for  $\ell^2(\Lambda,\mathbb{C})$. But, the converse is not true.

\item\label{155} $\mathcal{G}(\Lambda,  T_1, T_2, \bold{W}_j)$ is DVNUG Bessel sequence in  $\ell^2(\Lambda,\mathbb{C}^S)$ if and only if   each  element $[\bold{M}]_{ll'}$ of window matrix $\bold{M}$ generate discrete nonuniform Gabor  Bessel sequence in  $\ell^2(\Lambda,\mathbb{C})$. More precisely,  for each $1\le l \le S, 1\le l'\le P+1$, the system
\begin{align*}
\{E_{\frac{m}{M}}R_{2N\lambda}[\bold{M}]_{ll'} :\lambda\in \Lambda,m\in T_1   \}
\end{align*}
is a discrete nonuniform Gabor  Bessel sequence in  $\ell^2(\Lambda,\mathbb{C})$.
\end{enumerate}
\end{thm}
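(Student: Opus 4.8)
Every operator entering the problem acts entrywise in the $\mathbb{C}^S$-coordinate, so the whole argument reduces to one bookkeeping identity. For $\bold{Z}\in\ell^2(\Lambda,\mathbb{C}^S)$ write $z_k:=\{[\bold{Z}(\lambda)]_k\}_{\lambda\in\Lambda}\in\ell^2(\Lambda,\mathbb{C})$ $(1\le k\le S)$, and recall that the $(k,j{+}1)$-entry of $\bold{M}$ is the scalar sequence $[\bold{M}]_{k,j+1}=\{[\bold{W}_j(\lambda)]_k\}_{\lambda\in\Lambda}$. Since $E_{\frac{m}{M}}$ and $R_{2N\lambda}$ act coordinatewise and the inner product on $\ell^2(\Lambda,\mathbb{C}^S)$ is the sum of the $S$ coordinate inner products, one has
\begin{align*}
\langle \bold{Z},E_{\frac{m}{M}}R_{2N\lambda}\bold{W}_j\rangle=\sum_{k=1}^{S}\big\langle z_k,\,E_{\frac{m}{M}}R_{2N\lambda}[\bold{M}]_{k,j+1}\big\rangle,\qquad m\in T_1,\ j\in T_2,\ \lambda\in\Lambda,
\end{align*}
together with $\|\bold{Z}\|_{\ell^2}^2=\sum_{k=1}^{S}\|z_k\|_{\ell^2}^2$. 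I would record this first; both parts are then manipulations of it, in the same spirit as the proof of Theorem \ref{InterthI}.

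\emph{Part $(i)$.} For the forward implication, fix $l_0$ and an arbitrary $x\in\ell^2(\Lambda,\mathbb{C})$, and let $\bold{Z}$ carry $x$ in its $l_0$-th coordinate and $0$ elsewhere, so $\|\bold{Z}\|_{\ell^2}=\|x\|_{\ell^2}$ and only the $k=l_0$ term of the identity survives; letting $j$ run over $T_2$ is the same as letting $l'=j+1$ run over $\{1,\dots,P+1\}$, so feeding $\bold{Z}$ into \eqref{bessel} gives exactly
\begin{align*}
A_0\|x\|_{\ell^2}^2\le\sum_{\substack{m\in T_1,\ 1\le l'\le P+1\\ \lambda\in\Lambda}}\big|\langle x,E_{\frac{m}{M}}R_{2N\lambda}[\bold{M}]_{l_0,l'}\rangle\big|^2\le B_0\|x\|_{\ell^2}^2,
\end{align*}
i.e. the $l_0$-th row of $\bold{M}$ generates a discrete nonuniform Gabor frame for $\ell^2(\Lambda,\mathbb{C})$. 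For the failure of the converse I would exhibit a small example in which all rows of $\bold{M}$ coincide: take $S=2$ with $[\bold{W}_j(\lambda)]_1=[\bold{W}_j(\lambda)]_2$ for all $j,\lambda$; concretely, with $N=r=M=1$, $P=1$ (so $\Lambda=\mathbb{Z}$, $T_1=\{0\}$, $T_2=\{0,1\}$), let $\bold{W}_0$ be supported at $0$ with value $(1,1)$ and $\bold{W}_1$ supported at $1$ with value $(1,1)$. Each row of $\bold{M}$ is then $\{\delta_0,\delta_1\}$, and $\{R_{2\lambda}\delta_0,R_{2\lambda}\delta_1:\lambda\in\mathbb{Z}\}=\{\delta_n:n\in\mathbb{Z}\}$ is a Parseval frame for $\ell^2(\mathbb{Z},\mathbb{C})$; but for $\bold{Z}$ with $\bold{Z}(\lambda)=(z(\lambda),-z(\lambda))$ one gets $\langle\bold{Z},R_{2\lambda}\bold{W}_j\rangle=0$ for all $\lambda,j$ while $\|\bold{Z}\|_{\ell^2}^2=2\|z\|_{\ell^2}^2$, so no lower frame bound can hold and $\mathcal{G}(\Lambda,T_1,T_2,\bold{W}_j)$ is not a DVNUG frame.

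\emph{Part $(ii)$.} The ``only if'' direction is the same embedding: fixing $l_0,l_0'$ and putting $j_0=l_0'-1$, embed $x$ into the $l_0$-th coordinate of $\bold{Z}$ and discard from the Bessel sum the nonnegative terms with $j\neq j_0$, to obtain $\sum_{m,\lambda}|\langle x,E_{\frac{m}{M}}R_{2N\lambda}[\bold{M}]_{l_0,l_0'}\rangle|^2\le\sum_{m,j,\lambda}|\langle\bold{Z},E_{\frac{m}{M}}R_{2N\lambda}\bold{W}_j\rangle|^2\le B_0\|x\|_{\ell^2}^2$. Conversely, if each $[\bold{M}]_{l,l'}$ generates a Bessel sequence with bound $B_{l,l'}$, then starting from the splitting identity, applying Lemma \ref{lemmahari} to the $S$-term sum inside $|\cdot|^2$, interchanging the summations, and bounding each scalar Bessel sum by $B_{k,j+1}\|z_k\|_{\ell^2}^2$ yields
\begin{align*}
\sum_{\substack{m\in T_1,\ j\in T_2\\\lambda\in\Lambda}}\big|\langle \bold{Z},E_{\frac{m}{M}}R_{2N\lambda}\bold{W}_j\rangle\big|^2\le 2^{S-1}\Big(\sum_{l=1}^{S}\sum_{l'=1}^{P+1}B_{l,l'}\Big)\|\bold{Z}\|_{\ell^2}^2,
\end{align*}
the desired Bessel estimate; here one uses $\|z_k\|_{\ell^2}^2\ge 0$ and $\sum_k\|z_k\|_{\ell^2}^2=\|\bold{Z}\|_{\ell^2}^2$.

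\emph{Main obstacle.} Granting the coordinate-splitting identity, everything is routine; the only step requiring genuine thought is the counterexample to the converse of $(i)$, where one must simultaneously keep each row of $\bold{M}$ a frame and destroy the lower frame bound of the vector-valued system --- which is precisely what forcing the rows of $\bold{M}$ to be linearly dependent accomplishes.
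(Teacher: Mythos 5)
Your proposal is correct and follows essentially the same route as the paper: the coordinate-splitting identity, embedding a scalar sequence into a single coordinate for the forward/``only if'' directions, Lemma \ref{lemmahari} plus $\|z_k\|_{\ell^2}^2\le\|\bold{Z}\|_{\ell^2}^2$ for the converse Bessel bound, and a counterexample with linearly dependent (identical) rows tested against a vector of the form $(z,-z,\dots)$ to defeat the lower frame bound. The only cosmetic differences are that your counterexample is fully explicit with $S=2$ and delta sequences (the paper uses $S=3$ with an abstract scalar frame generator) and your Bessel constant is $2^{S-1}\sum_{l,l'}B_{l,l'}$ rather than the paper's $2^{S-1}(P+1)\beta_0$ with a uniform bound.
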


\begin{proof}
$(i)$: Fix $l_0\in \{1,2,\cdots,S\}$, and let  $z_{l_0}=\{ z_{l_0}(\lambda) \}_{\lambda\in \Lambda}  \in \ell^2(\Lambda,\mathbb{C}) $ be   arbitrary.
Define a  vector $\bold{Z}_0=\Bigg\{\begin{bmatrix}
\big[ \bold{Z}_0(\lambda)  \big]_1
\vspace{5pt}\\
\big[ \bold{Z}_0(\lambda)  \big]_2\\
\vdots\\
\big[ \bold{Z}_0(\lambda)  \big]_S
\end{bmatrix}\Bigg\}_{\lambda\in \Lambda}$ in $\ell^2(\Lambda,\mathbb{C}^S)$
as follows:
\begin{align}\label{hariharan}
\big[ \bold{Z}_0(\lambda)  \big]_k=\begin{cases}
z_{l_0}(\lambda), & \lambda\in \Lambda,k=l_{0},\\
0, & \lambda\in \Lambda, k\in \{1,2,\cdots,S \}\setminus l_0.
\end{cases}
\end{align}
Clearly,  $||\bold{Z}_0||_{\ell^2}=||z_{l_0}||_{\ell^2}$. Let  $A_0$ and $B_0$ be frame bounds for $\mathcal{G}(\Lambda,  T_1, T_2, \bold{W}_j)$. Then,
\begin{align}\label{hariharan1}
A_0||\bold{Z}_0||^2\le \sum\limits_{\substack{ m\in T_1,j\in T_2\\  \lambda\in \Lambda}}\Big|\langle \bold{Z}_0,E_{\frac{m}{M}}R_{2N\lambda}\bold{W}_j\rangle\Big|^2\le B_0||\bold{Z}_0||^2.
\end{align}
Using \eqref{hariharan} and \eqref{hariharan1}, we have
\begin{align}\label{ho1}
A_0||z_{l_0}||^2\le \sum\limits_{\substack{ m\in T_1,j\in T_2\\  \lambda\in \Lambda}}\Big|\langle z_{l_0},E_{\frac{m}{M}}R_{2N\lambda}[\bold{W}_j]_{l_0}\rangle\Big|^2\le B_0||z_{l_0}||^2.
\end{align}
Using the following relation, which is by hypothesis,
\begin{align*}
\{[\bold{W}_j]_{l_0}:j\in T_2 \}=\{[\bold{M}]_{l_0,l'}:1\le l'\le P+1  \},
\end{align*}
and \eqref{ho1}, we conclude that $\{E_{\frac{m}{M}}R_{2N\lambda}[\bold{M}]_{l_0,l'} :\lambda\in \Lambda,m\in T_1,1\le l'\le P+1\}$ is a discrete nonuniform frame for $\ell^2(\Lambda,\mathbb{C})$ with bounds $A_0$ and $B_0$.

To show that converse is not true.  Consider a sequence $\{w_j \}_{j\in T_2} \in \ell^2(\Lambda,\mathbb{C})$  with the property that $\{E_{\frac{m}{M}}R_{2N\lambda}w_j:\lambda\in\Lambda,m\in T_1,j\in T_2 \}$ is a discrete nonuniform  frame for $\ell^2(\Lambda,\mathbb{C})$. Choose $S=3$, and define $\{ \bold{W}_j \}_{j\in T_2}\subset \ell^2(\Lambda,\mathbb{C}^3) $ as follow:
\begin{align*}
\bold{W}_j(\lambda)=\begin{bmatrix}
[\bold{W}_j(\lambda)]_1
\vspace{5pt}\\
[\bold{W}_j(\lambda)]_2
\vspace{5pt}\\
[\bold{W}_j(\lambda)]_3
\end{bmatrix}=\begin{bmatrix}
w_j(\lambda)
\vspace{5pt}\\
w_j(\lambda)
\vspace{5pt}\\
w_j(\lambda)
\end{bmatrix}\,\,\,\,\,\text{for}\,\,\,\,\lambda\in \Lambda, \ j\in T_2.
\end{align*}
Then,  each row of matrix
\begin{align*}
\bold{M}(\lambda)=\begin{bmatrix}
w_0(\lambda)   & w_1(\lambda) &\cdots &w_P(\lambda)
\vspace{5pt}\\
w_0(\lambda)   & w_1(\lambda) &\cdots &w_P(\lambda)
\vspace{5pt}\\
w_0(\lambda)   & w_1(\lambda) &\cdots &w_P(\lambda)
\end{bmatrix}_{3\times (P+1)},\,\,\,\,\,\text{for}\,\,\,\,\lambda\in \Lambda;
\end{align*}
that is, for each $l_0\in \{1,2,3\}$, the sequence  $\{E_{\frac{m}{M}}R_{2N\lambda}[\bold{M}]_{l_0l'} :\lambda\in \Lambda,m\in T_1,1\le l'\le P+1\}$ is a discrete nonuniform Gabor frame for $\ell^2(\Lambda,\mathbb{C})$.

But, $\mathcal{G}(\Lambda,  T_1, T_2, \bold{W}_j)$ is not a DVNUG frame for $\ell^2(\Lambda,\mathbb{C}^3) $. Indeed, let $\alpha_0$ and $\beta_0$ be frame bounds for $\mathcal{G}(\Lambda,  T_1, T_2, \bold{W}_j)$. Then, for  all $\bold{Z}\in \ell^2(\Lambda,\mathbb{C}^3)$, we have
\begin{align}\label{tata}
\alpha_0||\bold{Z}||^2\le \sum\limits_{\substack{ m\in T_1,j\in T_2\\  \lambda\in \Lambda}}\Big|\langle \bold{Z},E_{\frac{m}{M}}R_{2N\lambda}\bold{W}_j\rangle\Big|^2\le \beta_0||\bold{Z}||^2.
\end{align}

\begin{align*}
\text{Choose} \ \bold{Z}_0=\Bigg\{\begin{bmatrix}
[\bold{Z}_0(\lambda)]_1
\vspace{5pt}\\
[\bold{Z}_0(\lambda)]_2
\vspace{5pt}\\
[\bold{Z}_0(\lambda)]_3
\vspace{5pt}
\end{bmatrix} \Bigg\}_{\lambda\in \Lambda} \in \ell^2(\Lambda,\mathbb{C}^3), \ \text{where} \
\bold{Z}_0(\lambda)=\begin{bmatrix}
[\bold{Z}_0(\lambda)]_1
\vspace{5pt}\\
[\bold{Z}_0(\lambda)]_2
\vspace{5pt}\\
[\bold{Z}_0(\lambda)]_3
\vspace{5pt}
\end{bmatrix}=\begin{bmatrix}
z(\lambda)
\vspace{5pt}\\
-z(\lambda)
\vspace{5pt}\\
0
\vspace{5pt}
\end{bmatrix},\,\,\,\,\,\,\lambda\in \Lambda.	
\end{align*}
Here,   $0 \ne z=z(\lambda)\in \ell^2(\Lambda,\mathbb{C})$ is any fixed element. Then,   $\bold{Z}_0$ is a non-zero vector such that
\begin{align*}
\sum\limits_{\substack{ m\in T_1,j\in T_2\\  \lambda\in \Lambda}}\Big|\langle \bold{Z}_0,E_{\frac{m}{M}}R_{2N\lambda}\bold{W}_j\rangle\Big|^2
=\sum\limits_{\substack{ m\in T_1,j\in T_2\\  \lambda\in \Lambda}}\Bigg|\sum\limits_{\substack{ k=1\\  \lambda'\in \Lambda}}^3\big[\bold{Z}_0(\lambda')\big]_k  \overline{\big[E_{\frac{m}{M}}R_{2N\lambda}\bold{W}_j(\lambda') \big]_k}\Bigg|^2 =0,
\end{align*}
which contradicts lower inequality of  \eqref{tata}.

$(ii)$: Firstly, assume that $\mathcal{G}(\Lambda,  T_1, T_2, \bold{W}_j)$  is a DVNUG  Bessel sequence for $\ell^2(\Lambda,\mathbb{C}^S)$ with Bessel bound $\beta$. Let $x=\{x(\lambda)\}_{\lambda\in \Lambda}\in \ell^2(\Lambda,\mathbb{C})$ be an arbitrary element. Let $l_0,l_0'$ are any fixed integers such that  $1\le l_0\le S, 1\le l_0'\le P+1$.

 Define $\bold{Z}_0=\Bigg\{
\begin{bmatrix}
[\bold{Z}_0(\lambda)]_1
\vspace{5pt}\\
[\bold{Z}_0(\lambda)]_2
\vspace{5pt}\\
\vdots\\
[\bold{Z}_0(\lambda)]_S
\end{bmatrix}\Bigg\}_{\lambda\in \Lambda}   \in \ell^2(\Lambda,\mathbb{C}^S)$ as follows
\begin{align*}
[\bold{Z}_0(\lambda)]_k=\begin{cases}
x(\lambda), & \lambda\in \Lambda,k=l_0,\\
0 , & \lambda\in \Lambda,k\in \{1,2,\dots,S\}\setminus l_0
\end{cases}.
\end{align*}
We compute
\begin{align*}
\sum\limits_{\substack{ m\in T_1,\lambda\in \Lambda  }}\Big|\langle x,E_{\frac{m}{M}}R_{2N\lambda}[\bold{M}]_{l_0l_0'}\rangle\Big|^2&=\sum\limits_{\substack{ m\in T_1,  \lambda\in \Lambda}}\Big|\langle x,E_{\frac{m}{M}}R_{2N\lambda}[\bold{W}_{l_0'-1}]_{l_0}\rangle\Big|^2\\
&=\sum\limits_{\substack{ m\in T_1,  \lambda\in \Lambda}}\Big|\langle \bold{Z}_0,E_{\frac{m}{M}}R_{2N\lambda}\bold{W}_{l_0'-1}\rangle\Big|^2\\
&\le \sum\limits_{\substack{ m\in T_1,j\in T_2\\  \lambda\in \Lambda}}\Big|\langle \bold{Z}_0,E_{\frac{m}{M}}R_{2N\lambda}\bold{W}_{j}\rangle\Big|^2\\
&\le \beta ||\bold{Z}_0||^2\\
&= \beta ||x||^2.
\end{align*}

Conversely, assume now that  each element $[\bold{M}]_{ll'}$
of window matrix $\bold{M}$ generates DVNUG Bessel sequence for $\ell^2(\Lambda,\mathbb{C})$, so there exists positive constant $\beta_0$ say, such that  for each $1\le l \le S, 1\le l' \le P+1$,   we have
\begin{align*}
\sum\limits_{\substack{ m\in T_1,  \lambda\in \Lambda}}\Big|\langle x,E_{\frac{m}{M}}R_{2N\lambda}[\bold{M}]_{ll'}\rangle\Big|^2\le \beta_0 ||x||^2,
\end{align*}
for all  $x=\{x(\lambda)\}_{\lambda\in \Lambda}\in \ell^2(\Lambda,\mathbb{C})$, that is,
\begin{align}\label{raman}
\sum\limits_{\substack{ m\in T_1,  \lambda\in \Lambda}}\Big|\sum\limits_{\lambda'\in\Lambda} x(\lambda')\overline{E_{\frac{m}{M}}R_{2N\lambda}[\bold{M}(\lambda')]}_{ll'}\Big|^2\le \beta_0 \sum\limits_{\lambda\in \Lambda}|x(\lambda)|^2.
\end{align}
 Using Lemma \ref{lemmahari}, for any  $\bold{Z}=\Bigg\{   \begin{bmatrix}
  [\bold{Z}(\lambda)]_1
  \vspace{5pt}\\
  [\bold{Z}(\lambda)]_2\\
  \vdots\\
  [\bold{Z}(\lambda)]_S
  \end{bmatrix}
  \Bigg\}_{\lambda\in \Lambda}\in \ell^2(\Lambda,\mathbb{C}^S) $, we compute
 \begin{align}\label{chaman}
  &\sum\limits_{\substack{ m\in T_1,j\in T_2\\  \lambda\in \Lambda}}\Big|\langle \bold{Z},E_{\frac{m}{M}}R_{2N\lambda}\bold{W}_j\rangle\Big|^2 \nonumber\\
  &=\sum\limits_{\substack{ m\in T_1,j\in T_2\\  \lambda\in \Lambda}}\Big|\sum_{\substack{ k=1,  \lambda'\in \Lambda}}^S   [\bold{Z}(\lambda')]_k[\overline{E_{\frac{m}{M}}R_{2N\lambda}\bold{W}_j(\lambda')}]_k\Big|^2\nonumber\\
   &\le \sum\limits_{\substack{ m\in T_1,j\in T_2\\  \lambda\in \Lambda}}\sum\limits_{k=1}^S 2^{S-1}\Big|\sum\limits_{\lambda'\in \Lambda}   [\bold{Z}(\lambda')]_k\overline{E_{\frac{m}{M}}R_{2N\lambda}[\bold{W}_j(\lambda')}]_k\Big|^2 \nonumber\\
   &=2^{S-1}\sum\limits_{j\in T_2}\sum\limits_{k=1}^S\Bigg(\sum\limits_{\substack{ m\in T_1, \lambda\in \Lambda}}\Big|\sum\limits_{\lambda'\in \Lambda}[\bold{Z}(\lambda')]_k\overline{E_{\frac{m}{M}}R_{2N\lambda}[\bold{M}(\lambda')]}_{k,j+1}\Big|^2 \Bigg).
       \end{align}
Inequalities in  \eqref{raman} and \eqref{chaman} gives
\begin{align*}
\sum\limits_{\substack{ m\in T_1,j\in T_2\\  \lambda\in \Lambda}}\Big|\langle \bold{Z},E_{\frac{m}{M}}R_{2N\lambda}\bold{W}_j\rangle\Big|^2 & \le \beta_0 2^{S-1}\sum\limits_{j\in T_2}\sum\limits_{k=1}^S  \sum\limits_{\lambda\in \Lambda}\big|[\bold{Z}(\lambda)]_k\big|^2\\
&=\beta_0 2^{S-1}\sum\limits_{j\in T_2}||\bold{Z}||_{\ell^2}^2\\
&\le \beta_0 2^{S-1}(P+1)||\bold{Z}||_{\ell^2}^2.
\end{align*}
This concludes the proof.
\end{proof}
\begin{rem}
One may observed that result given in	Theorem \ref{120} (\ref{155}) is not true in case of DVNUG frames for $\ell^2(\Lambda,\mathbb{C}^S)$.
\end{rem}

%
%

\end{document}